\renewcommand\thefigure{\thesection.\@arabic\c@figure}
\renewcommand\thetable{\thesection.\@arabic\c@table}
\newtheorem{theorem}{Theorem}[section]
\newtheorem{lemma}[theorem]{Lemma}
\newtheorem{proposition}[theorem]{Proposition}
\newtheorem{corollary}[theorem]{Corollary}
\newtheorem{definition}[theorem]{Definition}
\newcommand{\mc}[1]{{\mathcal #1}}
\newcommand{\mf}[1]{{\mathfrak #1}}
\newcommand{\mb}[1]{{\mathbf #1}}
\newcommand{\bb}[1]{{\mathbb #1}}
\newcommand{\bs}[1]{{\boldsymbol #1}}
\newcommand{\<}{\langle}
\renewcommand{\>}{\rangle}
\renewcommand{\Cap}{{\rm cap}}
\begin{document}

\title[]{Meta-stability and condensed zero-range processes on finite
  sets}

\author{J. Beltr\'an, C. Landim}

\address{\noindent IMCA, Calle los Bi\'ologos 245, Urb. San C\'esar
  Primera Etapa, Lima 12, Per\'u and PUCP, Av. Universitaria cdra. 18,
  San Miguel, Ap. 1761, Lima 100, Per\'u. 
\newline e-mail: \rm
  \texttt{johel@impa.br} }

\address{\noindent IMPA, Estrada Dona Castorina 110, CEP 22460 Rio de
  Janeiro, Brasil and CNRS UMR 6085, Universit\'e de Rouen, Avenue de
  l'Universit\'e, BP.12, Technop\^ole du Madril\-let, F76801
  Saint-\'Etienne-du-Rouvray, France.  \newline e-mail: \rm
  \texttt{landim@impa.br} }

\keywords{Meta-stability, Markov processes, condensation, zero-range
  processes} 

\begin{abstract}
  We propose a definition o meta-stability and obtain sufficient
  conditions for a sequence of Markov processes on finite state spaces
  to be meta-stable. In the reversible case, these conditions reduce
  to estimates of the capacity and the measure of certain meta-stable
  sets. We prove that a class of condensed zero-range processes with
  asymptotically decreasing jump rates is meta-stable.
\end{abstract}

\maketitle

\section{Introduction}
\label{sec0}

The zero-range dynamics is a Markov process which models the evolution
of indistinguishable particles on some set $S$. It can be informally
described as follows. Set $S= \Lambda_L$, the one dimensional discrete
torus with $L$ points, fix a finite range probability measure $p$ on
$\bb Z$ and a rate function $g: \bb N\to \bb R_+$ such that $g(0)=0$,
$g(k)>0$ for $k\ge 1$. If there are $n$ particles at a site $x$ of
$\Lambda_L$, at rate $p(y) g(n)$ one of them jumps from $x$ to $x+y$.
The name ``zero-range'' derives from the observation that,
infinitesimally, the interaction is only with respect to those
particles at the particular vertex.  The case when $g(k)$ is
proportional to $k$ describes the situation of completely independent
particles.

Denote by $\eta$ the configurations of the state space $\bb
N^{\Lambda_L}$ and by $\eta(x)$, $x\in\Lambda_L$, the number of
particles at site $x$ for the configuration $\eta$. To describe the
invariant states of the process, let $Z: [0, \varphi^*) \to \bb R_+$
be the partition function given by
\begin{equation*}
Z(\varphi) \;=\; \sum_{k\ge 0} \frac {\varphi^k}{g(k)!}\;,
\end{equation*}
where $g(k)! = g(1) \cdots g(k)$, and where $\varphi^*$ stands for the
radius of convergence of $Z$. An elementary computation shows that the
product measures $\{\nu_\varphi : 0\le \varphi <\varphi^*\}$ defined
by
\begin{equation*}
\nu_{\varphi} (\eta) \;=\; \prod_{x\in \Lambda_L} 
\frac 1{Z(\varphi)} \frac{\varphi^{\eta(x)}}{g(\eta(x))!}
\end{equation*}
are stationary states for the Markov dynamics described above. The
parameter $\varphi$ is called the fugacity.

The density of particles under the stationary state $\nu_\varphi$,
denoted by $\rho(\varphi)$, is given by
\begin{equation*}
  \rho(\varphi) \;=\; E_{\nu_{\varphi}} [ \eta(x)]\;=\;
\frac 1{Z(\varphi)} \sum_{k\ge 0} k \, \frac {\varphi^k}{g(k)!}
\;=\; \varphi\, \partial_\varphi \log Z(\varphi)\;.
\end{equation*}
The density $\rho: [0, \varphi^*) \to \bb R_+$ is a strictly
increasing function of the fugacity.  Denote $\varphi(\rho)$ the
inverse function and by $\rho^* = \lim_{\varphi \to\varphi^*}
\rho(\varphi)$ the critical density.  By \cite[Lemma II.3.3]{kl},
$\rho^* = \infty$ whenever $\varphi^* = \infty$. In contrast, if
$\varphi^* < \infty$ both cases can occur.

Assume that $\rho^*<\infty$, i.e., that we have a phase transition.
By \cite[Lemma II.3.3]{kl}, $Z(\varphi^*) < \infty$ and the measure
$\nu_{\varphi^*}$ is well defined. Therefore, there is a product
stationary state if and only if the density is less than or equal to
the critical density $\rho^*$.

To examine the stationary states above the critical density, denote by
$\Sigma_{\Lambda_L,N}$, $N\ge 1$, the set of configurations with $N$
particles: $\Sigma_{\Lambda_L,N} = \{ \eta\in \bb N^{\Lambda_L} :
\sum_{x\in \Lambda_L} \eta (x)$ $ = N\}$.  Since the dynamics preserve
the total number of particles, if the random walk on $\Lambda_L$
associated to the transition probability $p$ is irreducible, the sets
$\Sigma_{\Lambda_L,N}$ are the ergodic components of the state space.
Let $\mu_{L,N}$ denote the unique stationary probability measure on
$\Sigma_{\Lambda_L,N}$.

For $\rho \le \rho^*$, as the total number of site increases to
infinity and the density is kept equal to $\rho$, the equivalence of
ensembles \cite[Appendix 2]{kl} states that the marginals of the
measure $\mu_{L,N}$ converges to the measure $\nu_{\varphi(\rho)}$.
More precisely, for any $n\ge 1$ and any $(k_1, \dots, k_n)$ in $\bb
N^n$,
\begin{equation*}
\lim_{L\to\infty} \mu_{L, [\rho L]} \big\{\eta : \eta(x_i) = k_i\,,
1\le i\le n \big\} \;=\; \nu_{\varphi(\rho)} 
\big\{\eta : \eta(x_i) = k_i\,, 1\le i\le n \big\} \;.
\end{equation*}
In this formula, $[a]$ stands for the integer part of $a\in\bb R$.

Above the critical density, in the stationary state, all additional
particles concentrate on \emph {one single} site \cite{evans, jmp,
  gss, fls}. In the remaining sites, particles are distributed
according to the grand canonical measure at the critical density
\cite{gss, al}: for $\rho > \rho^*$,
\begin{eqnarray*}
\!\!\!\!\!\!\!\!\!\!\!\!\! &&
\lim_{L\to\infty} \mu_{L, [\rho L]} \big\{\eta : \eta(x_i) = k_i\,,
1\le i\le n \big\} \;=\; \nu_{\varphi(\rho^*)} 
\big\{\eta : \eta(x_i) = k_i\,, 1\le i\le n \big\} \;, \\
\!\!\!\!\!\!\!\!\!\!\!\!\! && \quad 
\lim_{L\to\infty} \mu_{L, [\rho L]} \big\{\max_{1\le x\le L}
\eta(x) \ge [(\rho-\rho^*)L] \big\} \;=\; 1 \;.
\end{eqnarray*}

There is a huge physical literature on condensation. In this respect,
the zero-range process and similar interacting particle systems have
been used to model traffic flow, phase separation dynamics and
sandpile models (\cite{emz} and references therein).

In this article we examine the time evolution of the site which
concentrates all additional particles in the case where the total
number of sites is kept fixed, particles are allowed to jump to any
site with uniform probability and the total number of particles
increases to infinity.  We prove that in a proper time scale the
position of the site evolves according to a random walk on
$\Lambda_L$.

To prove this result we first propose a simple analytical definition
of meta-stability to be compared with the pathwise approach introduced
in \cite{cgov} and the spectral definition in \cite{b2}.  A sequence
of Markov processes $\{\eta^N_t : t\ge 0\}$ indexed by a parameter $N$
is said to be meta-stable if the state space $E_N$ can be partitioned
in a fixed finite number of sets $\Delta_N, \mc E^1_N, \dots , \mc
E^\kappa_N$ with the following two properties.

\renewcommand{\theenumi}{\alph{enumi}}
\renewcommand{\labelenumi}{(\theenumi)}

\begin{enumerate}
\item The time spent in $\Delta_N$ in any interval of time $[0,t]$
  vanishes as $N\uparrow\infty$.
  
\item Denote by $\xi^N_t$ the trace of the process $\eta^N_t$ on
  $E_N\setminus \Delta_N$. For $x$ in $E_N\setminus \Delta_N$, let
  $\Psi_N (x) = j$ if $x$ belongs to $\mc E^j_N$, $1\le j\le \kappa$.
  The (not necessarily Markovian) process $\Psi_N(\xi^N_t)$ converges
  to a Markovian process as $N\uparrow\infty$.
\end{enumerate}

If there is a well deeper than all others, as it is the case in the
Curie-Weiss model of \cite{cgov} for some parameters, the asymptotic
Markov process has one absorbing state. In the condensed zero-range
processes of Subsection \ref{2.4}, in contrast, there are several well
at the same maximal depth and no absorbing state appears in the limit.

We obtain sufficient conditions for a sequence of Markov processes on
finite state spaces to be meta-stable. In the reversible case, these
conditions reduce to estimates on the capacity and on the measure of
the meta-stable sets $\mc E^j_N$. We then prove that zero range
processes associated to a class of decreasing rate functions
$g(\cdot)$ are meta-stable.

\section{Notation and Results}
\label{sec1}

\subsection{Meta-stability}
Let $(E_N)_{N\ge 1}$ be a sequence of finite state spaces. For each
fixed $N\ge 1$ consider an irreducible $E_N$-valued Markov process
$(\bb P_\eta^N)_{\eta\in E_N}$ on the path space $D(\bb R_+,E_N)$ with
generator
$$
L_N(f)(\eta) \;=\; \sum_{\xi\in E_N}\big\{f(\xi)-f(\eta)\big\}\,
R_N(\eta,\xi)\;.
$$
Let $\bb E_{\,\eta}^{\,N}$ denote the expectation with respect to $\bb
P^N_{\eta}.$ For $t\ge 0$, let $\eta^N_t$ stand for the projection
$\eta^N_t:D(\bb R_+,E_N)\mapsto E_N$. As usual, we shall also use
$\eta^N_t$ to refer to the Markov Process $(\bb P_\eta^N)_{\eta\in
  E_N}$.

Fix an integer $\kappa\ge 1$ such that
$$
\kappa \;<\; \liminf_{N\to\infty} | E_N |\;,
$$
where $|E_N|$ stands for the cardinality of $E_N$. Let
$S=\{1,2,\dots,\kappa\}$. For each $E_N$ and each $x\in S$ fix a
subset $\mc E^x_N\subseteq E_N$ such that $\mc E^x_N\cap \mc
E^y_N=\emptyset$ for all $x\neq y$. Let us denote $\mc E_N=\cup_{x\in
  S}\mc E^x_N$ and $\Delta_N=E_N\setminus \mc E_N$, so that we get the
partition 
$$
E_N\;=\; \big(\bigcup_{x\in S} \mc E^x_N\big)
\cup\Delta_N\;.
$$ 
Let $\Psi_N:\mc E_N\mapsto S$ be given by
$$
\Psi_N(\eta) \;=\; \sum_{x\in S} x\, \mathbf 1\{\eta \in \mc E^x_N\}\;.
$$

In order to introduce the notion of meta-stability we need to recall
the definition of the trace of a Markov process on a subset of the
state space. Let $F$ be a subset of $E_N$. For each $t\ge 0$, let $\mc
T^{F}_t:D(\bb R_+,E_N)\mapsto\bb R_+$ be the time the process stayed
in the set $F$ in the interval $[0,t]$:
$$
\mc T^{F}_t(\omega)\;:=\;\int_{0}^t \mathbf{1}\{\omega(s)\in F\}ds
$$
and let $\mc S^{F}_t$ be the generalized inverse of $\mc T^{F}_t:$
$$
\mc S^{F}_t(\omega)\;:=\;\sup\{s\ge 0 : \mc T^{F}_s(\omega)\le t\}.
$$
For any path $\omega\in D(\bb R_+,E_N)$ such that $\omega(0)\in F$, we
define a new path $\mc R^{F}(\omega)\in D(\bb R_+,F)$ by $\mc
R^{F}(\omega)(t)=\omega(S_t(\omega))\,$. It is well known that, if for
each $\eta\in F$ we denote
$$
\bb P^{F}_{\eta} \;:=\; \bb P^{N}_{\eta}\circ(\mc R^{F})^{-1},
$$
then $(\bb P^{F}_{\eta})_{\eta\in F}$ is a strong Markov process with
path space $D(\bb R_+,F)$ (see \cite[V.2.11]{bg}). This Markov process
is called the trace of the Markov process $\eta^N_t$ on $F$. Let
$\eta^{F}_t$ denote the projection $\eta^{F}_t: D(\bb R_+,F)\mapsto
F$, for $t\ge 0\,$. $\eta^{F}_t$ will also be used to refer to the
trace of $\eta^N_t$ on $F$.

Consider the trace of $\eta^N_t$ on $\mc E_N$, referred to as
$\eta^{\mc E_N}_t$. For each $\eta\in \mc E_N$ denote by $\bb
Q^N_{\,\eta}$ the measure on $D(\bb R_+, S)$ induced by the process
$X^N_t:=\Psi_N(\eta^{\mc E_N}_t)$ when $\eta^{\mc E_N}_t$ starts at
$\eta$.

\renewcommand{\theenumi}{\Alph{enumi}}
\renewcommand{\labelenumi}{(\theenumi)}

\begin{definition}[\textbf{Meta-stability}]\label{metadef} The
  sequence of Markov processes $(\eta^N_t)_{N\ge 1}$ is said to be
  meta-stable with meta-states given by $(\mc E^x_N)_{x\in S}$ and
  limit given by a Markov process $(\bb Q_x)_{x\in S}$ on $D(\bb
  R_+,S)$ if
\begin{enumerate}
\item \label{metaa} For any $x\in S$ and any sequence $\eta_N\in\mc
  E^x_N$, $N\ge 1$, the sequence $\bb Q^N_{\,\eta_N}$ converges to $\bb
  Q_{\,x}.$

\item \label{metab} For any $t>0$, $$\lim_{N\to\infty} \max_{\eta\in
    \mc E_N}\bb E^N_{\eta}[\mc T^{\Delta_N}_t ]\;=\;0\;.$$
\end{enumerate}
\end{definition}

Condition (B) states that $\eta^N_t$ spends a vanishing period of time
on the subset $\Delta_N$. Condition (A) states that, after cutting off
the negligible time $\eta^N_t$ spends on $\Delta_N$, the asymptotic
evolution of the trace process is described by a Markov process on
$S$.

In fact, if the meta-stability behaviour of $\eta^N_t$ is assumed,
such cut off is no more necessary to observe the asymptotic
evolution. More precisely, consider the stochastic process $\hat
X^N_t$ with state space $S$ defined as
$$
\hat X^N_t\;=\; \Psi_N(\eta^N_{\sigma(t)}) \;,
$$
where $\sigma(t):=\sup\{s\le t : \eta^N_s\in \mc E_N\}.$ Note that
$\hat X^N_t$ is well defined whenever $\eta^N_t$ starts at a point in
$\mc E_N$. Denote by $\hat{\bb Q}^N_{\,\eta}$ the measure on $D(\bb
R_+,S)$ induced by $\hat X_t^N$ when $\eta^N_t$ starts at $\eta\in \mc
E_N.$

\begin{proposition}\label{corol}
  If the sequence of Markov processes $(\eta^N_t)_{N\ge 1}$ is
  meta-stable with meta-states given by $(\mc E^x_N)_{x\in S}$ and
  limit given by the Markov process $(\bb Q_x)_{x\in S}$, then for any
  $x\in S$ and any sequence $\eta_N\in\mc E^x_N$, $N\ge 1$, the
  sequence $\hat{\bb Q}^N_{\,\eta_N}$ converges to $\bb Q_{\,x}.$
\end{proposition}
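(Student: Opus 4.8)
The plan is to relate the process $\hat X^N_t$ directly to the trace process $X^N_t = \Psi_N(\eta^{\mc E_N}_t)$ via a random time change, and then to show that this time change converges to the identity uniformly on compacts thanks to condition (B) of meta-stability. Concretely, the key observation is that $\hat X^N_t$ is obtained from $X^N_t$ by a deterministic (given the path) reparametrization: if $\omega \in D(\bb R_+, E_N)$ is the original trajectory starting in $\mc E_N$, then $\mc R^{\mc E_N}(\omega)$ is the trace path, and $\Psi_N(\mc R^{\mc E_N}(\omega)(s)) = \Psi_N(\omega(\mc S^{\mc E_N}_s(\omega)))$. On the other hand, $\hat X^N_t(\omega) = \Psi_N(\omega(\sigma(t)))$ where $\sigma(t) = \sup\{s \le t : \omega(s) \in \mc E_N\}$. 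A moment's reflection shows that $\sigma(t) = \mc S^{\mc E_N}_{\mc T^{\mc E_N}_t(\omega)}(\omega)$, so that
$$
\hat X^N_t(\omega) \;=\; X^N_{\mc T^{\mc E_N}_t(\omega)}(\omega)\;.
$$
In other words, $\hat X^N$ is $X^N$ run along the clock $t \mapsto \mc T^{\mc E_N}_t$, the amount of real time spent in $\mc E_N$ up to time $t$.

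Next I would control the clock. Since $E_N = \mc E_N \cup \Delta_N$ is a partition, $\mc T^{\mc E_N}_t + \mc T^{\Delta_N}_t = t$, hence $0 \le t - \mc T^{\mc E_N}_t(\omega) = \mc T^{\Delta_N}_t(\omega)$ for all $t$. Condition (B) gives $\lim_{N\to\infty}\max_{\eta\in\mc E_N}\bb E^N_\eta[\mc T^{\Delta_N}_t] = 0$ for every fixed $t>0$; by Markov's inequality this yields, for any $\epsilon>0$ and $t>0$,
$$
\lim_{N\to\infty}\ \max_{\eta\in\mc E_N}\ \bb P^N_\eta\big[\, \mc T^{\Delta_N}_t > \epsilon\,\big]\;=\;0\;.
$$
Because $t \mapsto \mc T^{\Delta_N}_t$ is nondecreasing, a single such bound at a time $T$ controls the whole interval $[0,T]$: on the event $\{\mc T^{\Delta_N}_T \le \epsilon\}$ we have $\sup_{0\le t\le T}|t - \mc T^{\mc E_N}_t| \le \epsilon$. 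Thus the random time change $t \mapsto \mc T^{\mc E_N}_t$ converges, uniformly on compact time intervals and in $\bb P^N_{\eta_N}$-probability, to the identity map.

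Finally I would assemble the pieces. By hypothesis (meta-stability, condition (A)), $\bb Q^N_{\eta_N}$, the law of $X^N$ started from $\eta_N \in \mc E^x_N$, converges weakly to $\bb Q_x$ on $D(\bb R_+, S)$. We want the law $\hat{\bb Q}^N_{\eta_N}$ of $\hat X^N = X^N \circ \mc T^{\mc E_N}$ to converge to the same limit. This is a standard "random time change by an asymptotically trivial clock" argument: one can couple, on the same probability space (the original $\bb P^N_{\eta_N}$), the two processes $X^N_\cdot$ and $\hat X^N_\cdot = X^N_{\mc T^{\mc E_N}_\cdot}$, and show that the Skorohod distance between them on $[0,T]$ is dominated by a quantity that goes to $0$ in probability — using the time change $\lambda_N(t) = \mc T^{\mc E_N}_t$ itself as the allowed reparametrization in the definition of the Skorohod metric, its distortion $\sup_{s\ne t}|\log((\lambda_N(t)-\lambda_N(s))/(t-s))|$ being small once $\sup|t - \mc T^{\mc E_N}_t|$ is small, at least away from the (finitely many, by right-continuity) jump times. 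The main obstacle, and the point requiring care, is precisely this last Skorohod-metric estimate: unlike the case of a continuous limit, $\bb Q_x$ lives on $D(\bb R_+,S)$ with jumps, so one cannot simply pass from uniform closeness of time changes to uniform closeness of paths, and one must work with the $J_1$ topology and argue that the random time change $\lambda_N$, being itself continuous, nondecreasing and uniformly close to the identity, is an admissible reparametrization witnessing smallness of the Skorohod distance; one then invokes weak convergence of $\bb Q^N_{\eta_N}$ together with the convergence in probability of $\lambda_N$ to the identity (e.g. via the Skorohod representation theorem, or Billingsley's theorem on convergence under random time change) to conclude $\hat{\bb Q}^N_{\eta_N} \Rightarrow \bb Q_x$.
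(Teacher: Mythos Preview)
Your overall strategy---couple $X^N$ and $\hat X^N$ on the same probability space and show that their Skorohod distance tends to $0$ in probability, so that condition (A) transfers---is exactly the paper's approach. However, the key identity you rely on,
\[
\hat X^N_t \;=\; X^N_{\mc T^{\mc E_N}_t}\,,
\]
is false. Consider an excursion of $\eta^N$ into $\Delta_N$ that exits $\mc E^x_N$ at time $s_0$ and re-enters $\mc E_N$ at time $s_1>s_0$ in a \emph{different} well $\mc E^y_N$. For $t\in[s_0,s_1)$ one has $\hat X^N_t=x$ by definition, while $\mc T^{\mc E_N}_t=\mc T^{\mc E_N}_{s_0}$ is already equal to the jump time $\tau_n$ of $X^N$, so that $X^N_{\mc T^{\mc E_N}_t}=y$ by right-continuity of the trace process. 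Thus the two processes disagree on every such excursion. (Your formula $\sigma(t)=\mc S^{\mc E_N}_{\mc T^{\mc E_N}_t}$ also fails: the right side can exceed $t$, whereas $\sigma(t)\le t$ always.) A second, related issue is that $t\mapsto\mc T^{\mc E_N}_t$ is merely nondecreasing, constant on excursions, so it is not an element of the class $\Lambda_m$ of strictly increasing homeomorphisms used in the $J_1$ metric; you cannot use it ``itself as the allowed reparametrization''.

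What survives is the observation that $X^N$ and $\hat X^N$ share the same jump chain, with $\hat X^N$ having holding times that dominate those of $X^N$ (indeed $\tau_n(\hat X^N)-\tau_n(X^N)=\mc T^{\Delta_N}_{\tau_n(\hat X^N)}$). The paper exploits exactly this: it builds a \emph{piecewise linear} time change sending $\tau_j(\hat X^N)$ to $\tau_j(X^N)$ for $j\le n_m(\hat X^N)$, which \emph{is} strictly increasing, and obtains the explicit bound $d_m(X^N,\hat X^N)\le \kappa\,\mc T^{\Delta_N}_m$; condition (B) then gives $d(X^N,\hat X^N)\to 0$ in $L^1$. Your argument becomes correct once you replace the incorrect clock $\mc T^{\mc E_N}$ by this jump-time interpolation.
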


We postpone the proof of this proposition to Section \ref{proof}.

\subsection{Entry times and meta-stability}
\label{sec2}

The purpose of this subsection is to provide sufficient conditions on
the sequence of Markov processes $\eta^N_t$ to ensure a nontrivial
meta-stability behavior of the dynamics.

To state such conditions we shall use entry times. For a subset $G$ of
$E_N$ we denote by $H_{G}:D(\bb R_+,E_N)\mapsto\bb R_+$ the entry time
in $G$ defined as
\begin{equation}\label{f10}
H_G(\omega) \;:=\; \inf\{t\ge 0 : \omega(t) \in G\},
\end{equation}
with the convention that $H_{\eta}=H_{\{\eta\}}$ when $G$ is a
singleton $\{\eta\}.$ By abuse of notation we also denote by
$H_{G}:D(\bb R_+,F_N)\mapsto[0,+\infty]$ the entry time in $G\subseteq
F_N$ for any $F_N$ subset of $E_N$.

Let $(R^{\mc E}_N(\eta,\xi) : \eta,\xi\in\mc E_N)$ be the transition
rates of $\eta^{\mc E_N}_t$ and let $L^{\mc E}_N$ be the corresponding
generator:
\begin{equation*}
L^{\mc E}_Nf(\eta) \;=\; \sum_{\xi\in\mc E_N} \{f(\xi) - f(\eta)\}
R^{\mc E}_N(\eta,\xi)\;.
\end{equation*}
For each pair $x,y\in S$, $x\not = y$ and $\eta\in\mc E^x_N$ we define
$$
R^{\mc E}_N(\eta,y) \;:= \; \sum_{\xi\in\mc E^y_N}R^{\mc E}_N(\eta,\xi)\;.
$$
Recall that $\eta^N_t$ is supposed to be irreducible. Denote by
$\nu_N$ its unique invariant probability measure. Let $r_N(x,y)$ be
the expectation of $R^{\mc E}_N(\cdot,y)$ with respect to $\nu_N$
conditioned on $\mc E^x_N$ :
\begin{equation*}\label{rnxy}
r_N(x,y) \; := \; \frac{1}{\nu_N(\mc E^x_N)}\sum_{\eta\in\mc E^x_N}
R^{\mc E}_N(\eta,y)\nu_N(\eta)\;.
\end{equation*}
We shall require that the following limit exists
\begin{equation*}\tag*{{\bf (C1)}}
r(x,y) \;:= \; \lim_{N\to\infty} r_N(x,y)\;.
\end{equation*}
for all $x,y\in S$. The limits $(r(x,y) : x,y\in S)$ become the
transition rates of the Markov process limit in condition (A).

The Markov property for the limit of $X^N_t$ follows from a suitable
ergodic assumption for the dynamics on each $\mc E^x_N$. For each set
$\mc E^x_N$, choose a site $\xi^x_N$ and let $M_N=\{\xi^x_N : x\in
S\}$ be the meta-stable sites. We shall assume that for each $x\in S$,
the time it takes for $\eta^N_t$ to visit the meta-stable site
$\xi^x_N$, once it enters $\mc E^x_N$ is of sufficiently small
order. More precisely, we shall assume that
\begin{equation*}\tag*{{\bf (C2)}}
\lim_{N\to\infty}\;\big( \max_{\eta\in\mc E^x_N}
R_N^{\mc  E}(\eta,y)\big) \; 
\big(\max_{\eta\in\mc E^x_N}\bb E^N_{\,\eta}[H_{\xi^x_N}]\big)\;=\;0\;,
\end{equation*}
for all $x, y \in S$. 

In this assumption we could replace $\bb E^N_{\,\eta}[H_{\xi^x_N}]$ by
the smaller quantity $\bb E^{\mc E}_{\,\eta}[H_{\xi^x_N}]$. The
difference, however, is the time spent in $\Delta_N$ which we require
to vanish in condition (B).

For condition (B), we need to regard, for each $x\in S$, the trace of
$\eta^N_t$ on $E_N\setminus\mc E^x_N$, which will be denoted by $(\bb
P^{N,x}_{\eta})_{\eta\in E_N\backslash\mc E^x_N}$. Denote by $\bb
E^{N,x}_{\,\eta}$ the expectation with respect to $\bb
P^{N,x}_{\eta}$. For each $x\in S$, let us denote by
$\partial_x\Delta_N$ the set of sites in $\Delta_N$ which may be
attained from a site in $\mc E^x_N$ after a jump.
$$
\partial_x\Delta_{N}\;:=\;\{\xi\in\Delta_N : 
R_N(\eta,\xi) > 0\; \textrm{for some}\; \eta \in \mc E^x_N \}\;.
$$
Let also $\check{\mc E}^x_N$ stands for $\mc E_N \setminus \mc
E^x_N$.  We shall require that
\begin{equation*}\tag*{{\bf (C3)}}
\lim_{N\to\infty}\;\max_{\eta\in\partial_x\Delta_{N}}
\bb E^{N,x}_{\,\eta}[H_{\check{\mc E}^x_N}] \;=\; 0\;
\end{equation*}
for all $x\in S$, where the maximum is equal to zero, if
$\partial_x\Delta_N$ is empty. In particular, {\bf (C3)} is
automatically satisfied if $\partial_x\Delta_N$ is empty for all $N\ge
0$.

We now state our first main result in this article.

\begin{theorem}\label{teo1}
  A sequence of Markov processes satisfying conditions {\rm {\bf
      (C1)}} -- {\rm {\bf (C3)}} is meta-stable in the sense of
  Definition \ref{metadef}, with meta-states given by $(\mc
  E^x_N)_{x\in S}$ and limit given by the Markov process $\{\bb
  Q_{\,x}\}_{x\in S}$ with transition rates $(r(x,y) : x,y\in S)$.
\end{theorem}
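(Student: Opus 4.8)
The plan is to verify the two requirements of Definition \ref{metadef} separately, using the trace process $\eta^{\mc E_N}_t$ as the central object. For condition (B), observe that for $\eta \in \mc E_N$ the expected time $\bb E^N_\eta[\mc T^{\Delta_N}_t]$ spent in $\Delta_N$ up to time $t$ can be controlled by decomposing the trajectory into excursions: each time the process leaves some $\mc E^x_N$ into $\partial_x \Delta_N$ and returns to $\mc E_N$, the expected duration of that excursion in $\Delta_N$ is at most $\max_{\xi \in \partial_x\Delta_N} \bb E^{N,x}_\xi[H_{\check{\mc E}^x_N}]$ plus the analogous return time, which (C3) forces to zero. The number of such excursions in $[0,t]$ is governed by the jump rates out of $\mc E^x_N$, and these are bounded via the factor $\max_{\eta \in \mc E^x_N} R^{\mc E}_N(\eta,y)$ appearing in (C2); combining the bounded excursion count with the vanishing per-excursion cost gives (B). I expect this step to require a careful but routine renewal-type estimate, and one should be mindful that the starting point $\eta$ ranges over all of $\mc E_N$, so the bound must be uniform.

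For condition (A), the strategy is to show that the law $\bb Q^N_{\eta_N}$ of $X^N_t = \Psi_N(\eta^{\mc E_N}_t)$ converges to the law of the Markov chain on $S$ with rates $r(x,y)$. The natural route is via the martingale characterization: for a test function $f:S\to\bb R$, consider $f(X^N_t) - f(X^N_0) - \int_0^t (\mc L_N f)(X^N_s)\,ds$ where $\mc L_N g(x) = \sum_{y\neq x} R^{\mc E}_N(\eta^{\mc E_N}_s, y)[g(y)-g(x)]$ is the (non-autonomous, configuration-dependent) generator of the trace process projected by $\Psi_N$. One must replace the fluctuating coefficient $R^{\mc E}_N(\cdot, y)$ by its $\nu_N$-average $r_N(x,y) \to r(x,y)$ over $\mc E^x_N$; this replacement is exactly where (C2) enters, since it guarantees that within each sojourn in $\mc E^x_N$ the process equilibrates (reaches the representative site $\xi^x_N$ and hence mixes over $\mc E^x_N$) on a time scale negligible compared to the scale on which $X^N_t$ jumps. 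Tightness of $\{\bb Q^N_{\eta_N}\}$ in $D(\bb R_+, S)$ is straightforward since $S$ is finite and, by the excursion bound above, jumps of $X^N_t$ cannot accumulate.

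The main obstacle is the averaging argument in step (A): making precise that, over any time window on which $X^N_t$ remains in state $x$, the empirical time-average of $R^{\mc E}_N(\eta^{\mc E_N}_s, y)$ is close to $r_N(x,y)$. This is an ergodic/equilibration statement for the trace dynamics restricted to $\mc E^x_N$, and (C2) provides the quantitative input — the hitting time of $\xi^x_N$ is short relative to the exit rate — from which one deduces that the occupation measure of $\eta^{\mc E_N}_s$ on $\mc E^x_N$, conditioned on not having left, is asymptotically $\nu_N(\cdot \mid \mc E^x_N)$. I would formalize this by writing $\int_0^t R^{\mc E}_N(\eta^{\mc E_N}_s,\Psi_N) \,ds$ as a sum over sojourns and, within each sojourn, comparing the integral against its equilibrium value using a one-block estimate controlled by the product in (C2). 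Once this replacement is justified, the limiting martingale problem for the chain with rates $r(x,y)$ is well posed on the finite set $S$, uniqueness of its solution yields convergence of $\bb Q^N_{\eta_N}$ to $\bb Q_x$, and together with (B) this establishes meta-stability.
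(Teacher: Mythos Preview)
Your treatment of Condition~(A) is essentially the paper's: tightness via Aldous, identification of the limit through the martingale problem, and the crucial replacement of the configuration-dependent rate $R^{\mc E}_N(\eta^{\mc E_N}_s,y)$ by its conditional average $r_N(x,y)$ using~(C2). The paper formalizes that replacement not by a sojourn decomposition but through a clean identity (its Lemma~\ref{s08} and Corollary~\ref{s09}): for a mean-zero $V$ supported on $\mc E^x_N$, one has
\[
\Big|\bb E^{\mc E}_{\xi}\Big[\int_0^t V(\eta^{\mc E_N}_s)\,ds\Big]\Big|\;\le\;2\,\Vert V\Vert_\infty\,\max_{\eta\in\mc E^x_N}\bb E^{\mc E}_\eta[H_{\xi^x_N}]\;,
\]
which makes~(C2) exactly the right hypothesis and avoids any conditioning on not having left $\mc E^x_N$.

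Your argument for Condition~(B), however, has a genuine gap. You assert that ``the number of such excursions in $[0,t]$ is governed by the jump rates out of $\mc E^x_N$, and these are bounded via the factor $\max_{\eta\in\mc E^x_N} R^{\mc E}_N(\eta,y)$ appearing in~(C2)''. But~(C2) bounds only the \emph{product} of that maximum rate with the hitting time of $\xi^x_N$; it gives no bound on the rate itself, which may well diverge as $N\to\infty$. Nor do (C1) or (C3) supply such a bound. A second, related issue is that the quantity controlled by~(C3), namely $\bb E^{N,x}_\xi[H_{\check{\mc E}^x_N}]$ for the trace on $E_N\setminus\mc E^x_N$, is not the duration of a single excursion into $\Delta_N$: it is the \emph{total} time spent in $\Delta_N$, possibly over many excursions returning to $\mc E^x_N$, before $X^N$ actually changes value. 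So the correct unit of decomposition is a jump of $X^N$ (equivalently $\hat X^N$), not an excursion into $\Delta_N$.

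The paper resolves this by reversing the order: it proves Condition~(A) first, then uses the already-established weak convergence $\bb Q^N_{\eta_N}\Rightarrow\bb Q_x$ to show that $\max_{\eta\in\mc E_N}\bb P^N_\eta[N_t(X^N)\ge K]$ is small for large $K$, uniformly in $N$ --- this is the tightness of the jump count that your argument is missing. With that in hand, $\mc T^{\Delta_N}_t \le t\,\mb 1\{N_t(\hat X^N)\ge K\}+\sum_{n=1}^K\big(T_n(\hat X^N)-T_n(X^N)\big)$, and each summand on the right is handled by~(C3). You could equally well obtain a bound on $\bb E^N_\eta[N_t(X^N)]$ directly from the replacement lemma plus~(C1), but either way the count of jumps must be controlled by~(A), not by~(C2).
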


It is important to notice that in this theorem we do not exclude the
possibility that $\nu_N(\mc E^x_N)$ vanishes as $N\uparrow\infty$ for
some $x\in S$.  The proof of this theorem is given in Section
\ref{teo1}.

\subsection{The reversible case, potential theory}\label{cap}

In this subsection, we restrict ourselves to the case where $\nu_N$ is
a reversible probability measure for all $N\ge 1$. 

Our aim is to state conditions concerning the generator $L_N$ and the
invariant measure $\nu_N$ which ensure that Theorem \ref{teo1} is in
force. We then need good estimates for the mean of entry times. As we
shall see in Section \ref{proof}, in the reversible case, the mean of
an entry time has a simple expression involving capacities, which are
defined as follows. For two disjoint subsets $F$, $G$ of $E_N$ define
$$
\mathcal B_N(F,G) \;:=\; \{f:E_N\mapsto\bb R : \textrm{$f(\eta)=1$ 
$\forall$ $\eta\in F$ and $f(\xi)=0$ $\forall$ $\xi\in G$}\}\;.
$$
We denote by $f_{F,G}:E_N\mapsto\bb R$ the function in $\mc B_N(F,G)$ 
defined as
$$
f_{F,G}(\eta) \;:=\; \bb P^N_{\eta}[H_{F} < H_G]\;.
$$
An elementary computation shows that $f_{F,G}$ solves
\begin{equation*}
\left\{
\begin{array}{ll}
(L_Nf_{F,G})(\eta) =0 & \eta\in E\setminus (F\cup G)\;, \\
f_{F,G}(\eta) = 1 & \eta\in F\;, \\
f_{F,G}(\eta) = 0 & \eta\in G \;.
\end{array}
\right.
\end{equation*}
The maximum principle guarantees that there is a unique solution of
this equation given by $f_{F,G}$.  Let $\nu_N(\,\cdot\,)$ stand for
the expectation with respect to $\nu_N$ and let
$\langle\cdot,\cdot\rangle_{\nu_N}$ stand for the scalar product in
$L^2(\nu_N)$. Denote by $D_N$ the Dirichlet form associated to the
generator $L_N$:
$$
D_N(f)\;:=\;\langle -L_N f,f \rangle_{\nu_N}
$$
for every $f:E_N\mapsto\bb R$. The capacity of two disjoint subsets
$F$, $G$ of $E_N$ is defined as
$$
\Cap_N(F,G) \;:=\; \inf\big\{\,D_N(f) : f\in \mathcal B_N(F,G)\,\big\}\;.
$$
Observe that $\Cap_N(F,G) = \Cap_N(G,F).$ If $F$ or $G$ are equal to
an unitary set $\{\eta\}$ in any notation just introduced, then we
shall write $\eta$ instead of $\{\eta\}$. It is well known that, since
$\nu_N$ is supposed to be reversible,
$$
\Cap_N(F,G)\;=\;D_N(f_{F,G})
$$
(see Theorem II.6.1 in \cite{l}\,).

We now state the hypotheses we shall use to prove that condition {\bf
  (C3)} is in force. Choose for each $x\in S$ and $N\ge 1$, a suitable
site $\zeta^x_N\in \partial_x\Delta_N$, whenever $\partial_x\Delta_N$
is non-empty. We require that
\begin{equation}
\tag*{\bf (H2)} \lim_{N\to\infty} 
\frac{ \nu_N(\Delta_N) }{ \Cap_N (\zeta^x_N,\check {\mc E}^x_N) } 
\;=\;0\;.
\end{equation}

Of course, {\bf (H2)} follows from the stronger condition
\begin{equation}
\tag*{\bf (H2')} \lim_{N\to\infty} \nu_N(\Delta_N)\;=\;0\quad 
\text{and} \quad \inf_{N\ge 1} \Cap_{N}(\zeta^x_N, \check{\mc E}^x_N) 
\;>\; 0\;,
\end{equation}
where the infimum in the right hand side is carried over all $N\ge 1$
such that $\partial_x\Delta_N$ is non-empty. In case that
$\partial_x\Delta_N$ is empty for all $N\ge 1$ then the right hand
side is supposed to be automatically satisfied.

Let $\overline{\mc E_N^x}$ stand for the set $\mc
E^x_N\cup\partial_x\Delta_N$. In addition, for each pair $x,y$ of
different elements of $S$ we denote by $\partial\mc E^{x,y}_N$ the
support of $R^{\mc E}_N(\cdot,y)$ on $\mc E^x_N$:
$$
\partial\mc E^{x,y}_N \;:=\; \{\eta\in \mc E^x_N : R_N^{\mc E}(\eta,y)>0\}.
$$
As our ergodic hypotheses, we shall assume that for any pair $x,y\in
S$ such that $x\neq y$ there holds
\begin{equation*}\tag*{\bf (H3)}
\lim_{N\to\infty}\;\min\big\{\, \Cap_N(\eta,\xi^x_N) : 
\eta\in\overline{\mc E^{x}_N}\,\big\} \;\min\big\{\,\nu_N(\eta) : 
\eta\in\partial\mc E^{x,y}_N\,\big\}\; =\; \infty,
\end{equation*}
where the second minimum is equal to $1$ if $\partial\mc E_N^{x,y}$ is
empty.

We now state our second main theorem that assures a meta-stable
behaviour in the reversible setting. Its proof is postponed to Section
\ref{proof}.

\begin{theorem}
\label{teo2}
A sequence of reversible Markov processes satisfying conditions {\rm
  {\bf (C1)}}, {\rm {\bf (H2)}}, {\rm {\bf (H3)}} is meta-stable in
the sense of Definition \ref{metadef}, with meta-states given by $(\mc
E^x_N)_{x\in S}$ and limit given by the Markov process $(\bb
Q_{\,x})_{x\in S}$ with transition rates $(r(x,y) : x,y\in S)$.
\end{theorem}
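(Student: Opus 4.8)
The plan is to derive Theorem~\ref{teo2} from Theorem~\ref{teo1} by showing that, in the reversible setting, conditions {\bf (H2)} and {\bf (H3)} imply the entry-time conditions {\bf (C2)} and {\bf (C3)}. The bridge is the well-known reversible formula expressing the mean entry time as a ratio of an equilibrium potential integral to a capacity; specifically, for a reversible Markov process and disjoint sets, one has an identity of the type
$$
\nu_N(G)\; \bb E^N_{\nu_N(\,\cdot\,|\,G)}[H_F] \;=\;
\sum_{\eta} \nu_N(\eta)\, f_{F,G}(\eta)\, \big/\, \Cap_N(F,G)\;\cdot\;(\text{const})\;,
$$
together with the elementary bound $\bb E^N_\eta[H_F] \le (\text{const})\,\nu_N(F\cup G)\,/\,\Cap_N(F,G)$ that follows from it after bounding the numerator by $\nu_N(E_N)=1$. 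I would first record this lemma (or cite it from Section~\ref{proof}), stating it both for the full generator $L_N$ and for the trace generators $L^{\mc E}_N$, $L^{N,x}_N$, noting that the trace process inherits reversibility with respect to the restricted measure and that its capacity coincides with the original capacity for subsets of the trace space (this last point uses the Dirichlet principle and the fact that the trace Dirichlet form is the restriction of $D_N$ to the relevant subspace).

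Next I would verify {\bf (C3)}. For $\eta\in\partial_x\Delta_N$ the relevant quantity is $\bb E^{N,x}_\eta[H_{\check{\mc E}^x_N}]$, the mean time for the trace on $E_N\setminus\mc E^x_N$ to reach $\check{\mc E}^x_N$. Applying the reversible bound with $F=\check{\mc E}^x_N$, $G=\mc E^x_N$ (inside the trace space $E_N\setminus\mc E^x_N$), the numerator is at most $\nu_N$ of the relevant set, which is dominated by $\nu_N(\Delta_N)$ up to the mass of $\check{\mc E}^x_N$ — here one must be slightly careful, but since we only need the time to reach $\check{\mc E}^x_N$ and the process is already confined off $\mc E^x_N$, the hitting time is controlled by the excursion inside $\Delta_N$, whose measure is $\nu_N(\Delta_N)$, divided by $\Cap_N(\zeta^x_N,\check{\mc E}^x_N)$, after replacing the starting point $\eta$ by the distinguished site $\zeta^x_N$ using irreducibility and a maximum-principle comparison among starting points in $\partial_x\Delta_N$. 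Then {\bf (H2)} is exactly the statement that this ratio vanishes, giving {\bf (C3)}.

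Then I would verify {\bf (C2)}, which requires the product $\big(\max_{\eta\in\mc E^x_N} R^{\mc E}_N(\eta,y)\big)\big(\max_{\eta\in\mc E^x_N}\bb E^N_\eta[H_{\xi^x_N}]\big)\to 0$. For the second factor I again use the reversible formula inside the trace on $\mc E_N$ (the extra time in $\Delta_N$ is harmless, being handled by {\bf (C3)}/{\bf (H2)}): $\bb E^{\mc E}_\eta[H_{\xi^x_N}] \le (\text{const})\,\nu_N(\mc E_N)\,/\,\Cap_N(\eta,\xi^x_N) \le (\text{const})/\min\{\Cap_N(\eta,\xi^x_N):\eta\in\overline{\mc E^x_N}\}$. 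For the first factor, $R^{\mc E}_N(\eta,y)$ is supported on $\partial\mc E^{x,y}_N$, and a reversibility/normalization argument bounds $\max_\eta R^{\mc E}_N(\eta,y)$ by something like $\Cap_N(\mc E^x_N,\mc E^y_N)\,/\,\min\{\nu_N(\eta):\eta\in\partial\mc E^{x,y}_N\}$, or more simply the sum $\sum_\eta \nu_N(\eta) R^{\mc E}_N(\eta,y)$ is a capacity-type quantity that stays bounded (this is essentially content of {\bf (C1)}, since $r_N(x,y)\nu_N(\mc E^x_N)$ converges). Multiplying the two estimates gives a quantity bounded above by $(\text{const})\big/\big(\min\{\Cap_N(\eta,\xi^x_N):\eta\in\overline{\mc E^x_N}\}\cdot\min\{\nu_N(\eta):\eta\in\partial\mc E^{x,y}_N\}\big)$, and {\bf (H3)} says precisely this denominator tends to infinity. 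With {\bf (C1)}, {\bf (C2)}, {\bf (C3)} all established, Theorem~\ref{teo1} applies and yields the conclusion.

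The main obstacle I anticipate is not any single estimate but getting the capacity/measure bookkeeping exactly right when passing between the three different ambient spaces ($E_N$, the trace on $\mc E_N$, and the trace on $E_N\setminus\mc E^x_N$): one needs that capacities are preserved under trace (so that {\bf (H2)}, {\bf (H3)}, stated for $L_N$, can be used for the trace dynamics) and that the reversible representation of the mean hitting time in the trace dynamics differs from the one for $\eta^N_t$ only by the time spent in $\Delta_N$, which is itself the object being controlled — so the argument has a mildly circular flavor that must be unwound in the correct order (establish {\bf (C3)}-type control of $\Delta_N$-time first via {\bf (H2)}, then feed it into the {\bf (C2)} estimate). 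Bounding $\max_\eta R^{\mc E}_N(\eta,y)$ from above in terms of a capacity is the other delicate point, since $R^{\mc E}_N$ involves the trace rates rather than the original rates $R_N$, and one must argue that a jump of the trace process from $\mc E^x_N$ to $\mc E^y_N$ corresponds to an excursion of $\eta^N_t$ through $\Delta_N$ whose rate is comparable to $\Cap_N(\mc E^x_N, \mc E^y_N)$ normalized by the local measure.
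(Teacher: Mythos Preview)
Your overall strategy matches the paper's: derive Theorem~\ref{teo2} from Theorem~\ref{teo1} by showing {\bf (H2)}, {\bf (H3)} imply {\bf (C2)}, {\bf (C3)} via the reversible mean-hitting-time formula $\bb E^N_\eta[H_F]=\nu_N(f_{\eta,F})/\Cap_N(\eta,F)$ and its trace-space analogue (Lemma~\ref{s07}(e)). Your treatment of {\bf (C2)} is essentially the paper's: bound $\max_{\eta}\bb E^N_\eta[H_{\xi^x_N}]$ by $1/\min\{\Cap_N(\eta,\xi^x_N):\eta\in\overline{\mc E^x_N}\}$ and bound $\max_\eta R^{\mc E}_N(\eta,y)$ by $\nu_N(\mc E^x_N)r_N(x,y)/\min\{\nu_N(\eta):\eta\in\partial\mc E^{x,y}_N\}$, then invoke {\bf (H3)} and the boundedness of $r_N(x,y)$ from {\bf (C1)}.

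There is, however, a real gap in your {\bf (C3)} argument. You correctly obtain $\bb E^{N,x}_{\zeta^x_N}[H_{\check{\mc E}^x_N}]\le \nu_N(\Delta_N)/\Cap_N(\zeta^x_N,\check{\mc E}^x_N)$ from Lemma~\ref{s07}(e), which vanishes by {\bf (H2)}. But for a general starting point $\eta\in\partial_x\Delta_N$ you propose to reduce to $\zeta^x_N$ ``using irreducibility and a maximum-principle comparison''. No such maximum principle is available here: the function $\eta\mapsto\bb E^{N,x}_\eta[H_{\check{\mc E}^x_N}]$ is not harmonic on $\partial_x\Delta_N$, and there is no monotonicity of $\Cap_N(\eta,\check{\mc E}^x_N)$ in $\eta$ that would let you replace $\eta$ by $\zeta^x_N$ in the denominator. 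The paper instead uses the strong Markov property to split
\[
\bb E^{N,x}_\eta[H_{\check{\mc E}^x_N}]\;\le\;\bb E^{N,x}_\eta[H_{\zeta^x_N}]\;+\;\bb E^{N,x}_{\zeta^x_N}[H_{\check{\mc E}^x_N}]\,,
\]
and controls the \emph{first} term by the same capacity estimate used in {\bf (C2)}: since $\eta,\zeta^x_N\in\overline{\mc E^x_N}$, one has $\bb E^{N,x}_\eta[H_{\zeta^x_N}]\le\bb E^{N}_\eta[H_{\zeta^x_N}]\le 2/\min\{\Cap_N(\xi,\xi^x_N):\xi\in\overline{\mc E^x_N}\}$, and this vanishes by the first factor in {\bf (H3)}. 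In particular, {\bf (H3)} is needed for {\bf (C3)} as well as for {\bf (C2)}; your unwinding order (``{\bf (C3)} via {\bf (H2)} alone, then feed into {\bf (C2)}'') is not quite right.
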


\subsection{Meta-stable behavior of condensed zero-range processes}
\label{2.4}

Fix an integer $\kappa\ge2$ and the set of sites
$S=\{1,2,\dots,\kappa\}$. Denote by $\bb N$ the set of nonnegative
integer numbers. For each $N\ge 1$, consider the set of configurations
$$
E_N\;:=\;\{\eta\in\bb N^{S} : \sum_{x\in S}\eta(x)= N\}
$$
where $\eta(x)$ stands for the number of particles at the site $x\in
S$. Fix $\alpha>1$ and let $g:\bb N\backslash\{0\}\mapsto\bb R$ be
given by $g(1)=1$ and
$$
g(n) \;=\; \Big(\frac{n}{n-1}\Big)^{\alpha},\quad\forall n\ge 2
$$
in such a way that $g(1)g(2)\dots g(n)=n^{\alpha}$ for all $n\ge
1$. Notice that $(g(n):n\ge 2)$ is a strictly decreasing sequence and
that $g(n)\to 1$ as $n\to \infty$. On each $E_N$ consider the zero
range process whose generator $L_N$ is given by
$$
L_N f(\eta)\;=\;\sum_{x,y\in S}  g(\eta(x)) 
\bs 1{\{\eta(x)>0\}} \big\{ f(\sigma^{x,y}\eta) - f(\eta) \big\}
$$
for $f:E_N\mapsto\bb R\,.$ Here $\sigma^{x,y}\eta$ is the
configuration obtained from $\eta$ letting a particle jump from $x$ to
$y$:
$$
\sigma^{x,y}\eta(z)\;=\;\left\{\begin{array}{ll}
\eta(x)-1 & \textrm{if $z=x$\;,}\\
\eta(y)+1 & \textrm{if $z=y$\;,}\\
\eta(z) & \textrm{otherwise}\;,
\end{array}
\right.
$$
so $\sigma^{x,y}\eta$ is well defined whenever $\eta(x)>0$. 

For each $N\ge 1$, we denote by $\nu_N$ the unique invariant
probability measure corresponding to $L_N$. It is easy to see that
each $\nu_N$ is a reversible measure and can be written as
$$
\nu_N(\eta) \;=\; \frac{1}{Z_{N,\kappa}}\;\frac{N^{\alpha}}
{\prod_{x\in S} p(\eta(x))}\;,\quad \forall \eta\in E_N
$$
where $p(0)=1$, $p(n)=n^{\alpha}$ if $n\ge 1$ and $Z_{N,\kappa}>0$ is
a normalizing constant. It is not difficult to prove that for each
$\kappa\ge 2$, there exists a constant $\delta_{\kappa} > 0 $ such
that
\begin{equation}\label{zeta}
 \delta_{\kappa} \;<\; Z_{N,\kappa} \;<\; \delta_{\kappa}^{-1} 
\end{equation}
for every $N\ge 1$.

Let $(\ell_N)_{N\ge 1}$ be a sequence of positive integers such that 
$$
\lim_{N\to\infty} N^{-1}\ell_N \;=\; 0 \quad \text{and}\quad 
\lim_{N\to\infty}\ell_N\;=\; \infty \;.
$$ 
This sequence determines the subsets of $E_N$:
\begin{equation*}
\mc E^x_N \;:=\; \{ \eta\in \bb N^{S} ;\; \eta(x)\ge N-\ell_N \}\;,
\quad x\in S\;.
\end{equation*}
Since $N^{-1}\ell_N\to 0$, for $N$ large enough we have $\mc
E^x_N\cap\mc E^y_N=\phi$ for all $x\not = y$ and $E_N\setminus
(\cup_{x\in S}\mc E^x_N) \not = \phi$ . Let $\mc E_N$ stand for
$\cup_{x\in S}\mc E^x_N$ and $\Delta_N$ stand for $E_N\setminus\mc
E_N.$

To establish the time-scale at which the zero-range process exhibit a
non-trivial meta-stable behavior, let $(\theta_N)_{N\ge 1}$ be a
sequence of positive numbers. For each $N\ge 1$, let $\eta^N_t$ stand
for the zero range process $L_N$ speeded-up by $\theta_N$. Let
$(R_N(\eta,\xi) : \eta,\xi\in E_N)$ denote the transition rates of
$\eta^N_t$ :
$$
\theta_N L_N f (\eta) \;=\; \sum_{\xi \in E_N} 
\big\{ f(\xi) - f(\eta)\big\} R_N(\eta,\xi)\;.
$$
We shall choose the time scale by examining condition {\bf
  (C1)}. 

Let $\Cap(\cdot,\cdot)$ be the capacity with respect to the generator
$L_N$. We choose the sequence $(\theta_N)_{N\ge 1}$ as
\begin{equation}\label{teta}
\theta_N \;=\; \frac{1}{\Cap(\mc E^x_N, \check{\mc E}^x_N)}\;,
\quad \text{for $N\ge 1$}\;.
\end{equation}
Since particles jump to any site with equal probability, the right
hand side does not depend on $x\in S$.  We prove in Lemma \ref{capor}
below that there exists a constant $C_0>0$ such that
$$
\frac 1 {C_0}\;<\; \frac{\theta_N}{N^{1+\alpha}}\;<\; C_0
$$
for all $N\ge 1$.

Recall that for $x,y\in S$, $x\not = y$ we denote
$$
r_N(x,y)\;=\; \frac{1}{\nu_N(\mc E^x_N)}\; 
\sum_{\substack { \eta\in\mc E^x_N \\ \xi\in\mc E^y_N}} 
R_N(\eta,\xi)\nu_N(\eta)\;.
$$
Notice that $r_N(x,y)$ does not depend on $x,y\in S$.  By formula (c)
in Lemma \ref{s07},
$$
\sum_{y\in S\setminus\{x\}}r_N(x,y) \;=\; \kappa\;,\quad \forall x\in S\;
$$
and so, $r_N(x,y)=\kappa(\kappa-1)^{-1}$ for all $N\ge 1$ and all
$x,y\in S$, $x\not = y$. In particular, condition {\bf (C1)} is
trivially satisfied.

\begin{theorem}\label{teozr}
  Let $(\theta_N)_{N\ge 1}$ be given by {\rm (\ref{teta})}. If
  $\ell_N\to \infty$ and
\begin{equation}\label{ln}
\lim_{N\to\infty}\frac{(\ell_N)^{\,2\alpha(\kappa-1) + 1}}
{N^{1+\alpha}}\;=\;0\;,
\end{equation}
then the sequence of zero-range processes $\eta^N_t$ is meta-stable in
the sense of Definition \ref{metadef}, with meta-states given by $(\mc
E_N^x)_{x\in S}$. The meta-stable behaviour is described by a Markov
process on $S$ with transition rates $r(x,y)=\kappa(\kappa-1)^{-1}$
for any $x,y\in S$, $x\neq y.$
\end{theorem}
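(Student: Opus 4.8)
The plan is to verify that the zero-range process $\eta^N_t$ satisfies the hypotheses of Theorem \ref{teo2}, namely conditions {\bf (C1)}, {\bf (H2)} and {\bf (H3)}, under the scaling assumption \eqref{ln}. Condition {\bf (C1)} is already checked in the discussion preceding the theorem, where it is shown that $r_N(x,y) = \kappa(\kappa-1)^{-1}$ for all $N$, so it remains to establish {\bf (H2)} and {\bf (H3)}. The time scale $\theta_N \asymp N^{1+\alpha}$ is fixed by \eqref{teta}, and by the companion Lemma \ref{capor} there is a constant $C_0$ with $C_0^{-1} < \theta_N/N^{1+\alpha} < C_0$; this will be used repeatedly to convert capacity estimates for the un-sped-up generator $L_N$ into estimates for the rates $R_N = \theta_N L_N$.

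First I would collect the basic estimates on the reference measure. Since $\nu_N(\eta) = Z_{N,\kappa}^{-1} N^\alpha / \prod_x p(\eta(x))$ with $p(n) = n^\alpha$ for $n\ge 1$ and $p(0)=1$, and since $Z_{N,\kappa}$ is bounded above and below by \eqref{zeta}, one gets $\nu_N(\eta) \asymp N^\alpha \prod_x p(\eta(x))^{-1}$. For $\eta \in \mc E^x_N$ the dominant site carries $\eta(x) \ge N - \ell_N$, so $\nu_N(\mc E^x_N) \asymp 1$, while a configuration in $\Delta_N$ has at least two sites with $\Omega(\ell_N)$ particles, forcing $\nu_N(\Delta_N) \lesssim \ell_N^{-\alpha(\kappa-1)}$ roughly — more precisely one sums the product weights over $\Delta_N$ and extracts the worst case, which is two macroscopic piles; I would carry out this combinatorial sum carefully since the exact exponent matters for matching \eqref{ln}. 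For {\bf (H2)} I then need a lower bound on $\Cap_N(\zeta^x_N, \check{\mc E}^x_N)$ for the chosen boundary point $\zeta^x_N \in \partial_x\Delta_N$. A natural choice of $\zeta^x_N$ is a configuration with $\eta(x) = N - \ell_N - 1$ and one extra particle placed at some other site; a capacity lower bound follows from a test-flow (or, dually, from the monotonicity $\Cap_N(\zeta^x_N, \check{\mc E}^x_N) \ge \Cap_N(\zeta^x_N, \mc E^y_N)$ for a single target well) combined with an explicit path from $\zeta^x_N$ through $\Delta_N$ into $\mc E^y_N$ whose conductances are controlled because the jump rates $g(\cdot)$ are bounded between $1$ and $2^\alpha$. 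Comparing the resulting bound with $\nu_N(\Delta_N) \to 0$ gives {\bf (H2)}; in fact I expect $\inf_N \Cap_N(\zeta^x_N,\check{\mc E}^x_N) > 0$ to hold, so the stronger {\bf (H2')} is available.

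The substantive work is condition {\bf (H3)}, which demands that
$$
\min\{\Cap_N(\eta,\xi^x_N) : \eta \in \overline{\mc E^x_N}\}\;\cdot\;
\min\{\nu_N(\eta) : \eta \in \partial\mc E^{x,y}_N\}\;\longrightarrow\;\infty .
$$
For the second factor: $\partial\mc E^{x,y}_N$ consists of configurations in $\mc E^x_N$ from which a single jump lands in $\mc E^y_N$, which (since $\ell_N \to \infty$) cannot happen when $\ell_N \ge 1$ for large $N$, so $\partial\mc E^{x,y}_N = \emptyset$ eventually and that minimum is $1$ by convention — wait, this needs care: a jump from $\mc E^x_N$ reaches $\mc E^y_N$ only if some site $y\ne x$ already has $\ge N - \ell_N - 1$ particles, impossible once $2(N-\ell_N-1) > N$, i.e. for all large $N$. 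Hence {\bf (H3)} reduces to showing $\min\{\Cap_N(\eta,\xi^x_N) : \eta\in\overline{\mc E^x_N}\} \to \infty$ after multiplying by $\theta_N$; equivalently, using $\theta_N \asymp N^{1+\alpha}$, that $N^{1+\alpha}\min_\eta \Cap(\eta,\xi^x_N)\to\infty$ for the un-sped generator. The worst $\eta$ is the one ``farthest'' from the meta-stable site $\xi^x_N$ inside $\overline{\mc E^x_N}$, namely a configuration with $\eta(x) = N - \ell_N$ and the remaining $\ell_N$ particles spread among the other $\kappa-1$ sites. A capacity lower bound here comes from a Thomson-principle flow argument: any unit flow from $\eta$ to $\xi^x_N$ must push $\Omega(\ell_N)$ particles off the subdominant sites back onto $x$, and along the way it traverses configurations of $\nu_N$-measure as small as $N^\alpha \ell_N^{-\alpha}\cdots$; reciprocals of these small measures enter the energy of the flow. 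I would estimate $\Cap(\eta,\xi^x_N) \gtrsim \big(\sum_{\text{path}} 1/(c\text{-weight})\big)^{-1}$ and show the sum is $O(\ell_N^{2\alpha(\kappa-1)}/N^\alpha)$, so that $N^{1+\alpha}\Cap \gtrsim N/\ell_N^{2\alpha(\kappa-1)}$, which tends to $\infty$ precisely by \eqref{ln}. The main obstacle is getting this capacity lower bound with the sharp exponent $2\alpha(\kappa-1)$: a crude single-path bound may lose a factor, so I would either choose the path optimally (moving particles one subdominant site at a time, emptying them in sequence) or use a spreading-out flow, and track the product of the $\kappa-1$ small denominators that appear when each subdominant pile of size up to $\ell_N$ is dismantled — this is where the exponent $2\alpha(\kappa-1)$ comes from and where the bookkeeping is delicate. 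Once {\bf (H3)} is in hand, Theorem \ref{teo2} applies and yields meta-stability with the stated limiting rates $r(x,y) = \kappa(\kappa-1)^{-1}$.
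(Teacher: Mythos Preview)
Your overall strategy --- apply Theorem \ref{teo2} and verify {\bf (C1)}, {\bf (H2)}, {\bf (H3)} --- matches the paper exactly, and your treatment of {\bf (H2')} is fine. The genuine gap is in your handling of {\bf (H3)}.

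You claim that $\partial\mc E^{x,y}_N$ is eventually empty because a single jump of the zero-range process cannot go from $\mc E^x_N$ to $\mc E^y_N$. But $\partial\mc E^{x,y}_N$ is defined as $\{\eta\in\mc E^x_N : R^{\mc E}_N(\eta,y)>0\}$, where $R^{\mc E}_N$ are the transition rates of the \emph{trace} process on $\mc E_N$, not of the original process. A trace-process jump from $\eta\in\mc E^x_N$ with $\eta(x)=N-\ell_N$ can very well land in $\mc E^y_N$: the original process leaves $\mc E^x_N$ into $\Delta_N$ and may next re-enter $\mc E_N$ through $\mc E^y_N$. In fact $\partial\mc E^{x,y}_N=\{\eta:\eta(x)=N-\ell_N\}$, and the paper uses the elementary bound $\min\{\nu_N(\eta):\eta\in\partial\mc E^{x,y}_N\}\ge C_0\,\ell_N^{-\alpha(\kappa-1)}$ coming from $\prod_{z\ne x}p(\eta(z))\le \ell_N^{\alpha(\kappa-1)}$.

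This error forces you to look for the full exponent $2\alpha(\kappa-1)$ in the capacity term alone, and your claimed bound $\sum_{\text{path}}1/c = O(\ell_N^{2\alpha(\kappa-1)}/N^\alpha)$ is not correct. Inside $\overline{\mc E^x_N}$ one has $\nu_N(\eta)\ge C_0\,\ell_N^{-\alpha(\kappa-1)}$ (the $N^\alpha$ in the numerator of $\nu_N$ cancels against $p(\eta(x))\sim N^\alpha$), and a path from any $\eta\in\overline{\mc E^x_N}$ to $\xi^x_N$ has length at most $\ell_N+1$, so the one-path Cauchy--Schwarz bound gives $\Cap(\eta,\xi^x_N)\ge C_0\,\ell_N^{-\alpha(\kappa-1)-1}$ and no better. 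It is the \emph{product} of this with the measure factor $\ell_N^{-\alpha(\kappa-1)}$, then multiplied by $\theta_N\asymp N^{1+\alpha}$, that produces $N^{1+\alpha}/\ell_N^{2\alpha(\kappa-1)+1}\to\infty$ under \eqref{ln}. Once you correct the identification of $\partial\mc E^{x,y}_N$, the single-path capacity bound you describe (emptying subdominant sites one at a time) is exactly what is needed, and no spreading-out flow is necessary.
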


\section{Condensed zero-range processes}

We show in this section that the condensed zero-range processes
introduced in Section \ref{sec0} present a meta-stable behavior in the
sense of Definition \ref{metadef}.  We start with a generalization of
zero-range processes on two sites. Throughout this section we use
systematically the notation introduced in the previous section.

\subsection{Birth and death process}
Fix $a<b$ in $\bb R$ and consider a nonnegative smooth function
$H:[a,b] \to \bb R_+$. Assume that $H$ vanishes only at a finite
number of points denoted by $a_1<a_2< \cdots < a_m$:
\begin{equation*}
H(x) = 0 \quad\text{if and only if}\quad x\in\{a_1, \dots, a_m\}\;.
\end{equation*}
We do not exclude the possibility that $H$ vanishes at the boundary
points $a$, $b$. 

For each $i=1,\dots,m$, assume that there exist a neighborhood
$V_{a_i}$ of $a_i$ and $\alpha_i >0$ such that
\begin{equation*}
  H (x) = |x-a_i|^{\alpha_i} \quad \text{for all $x\in V_{a_i}$}\;,
\end{equation*}
and that $V_{a_i}\cap V_{a_j}=\phi$ for $i\not = j$. Let $\alpha =
\max\{\alpha_i : 1\le i\le m\}$. Assume that $\alpha >1$ and that
there are at least two exponents $\alpha_i$ equal to $\alpha$:
\begin{equation*}
\kappa \;:=\; \big| \{i : \alpha_i = \alpha\}\big| \;\ge\; 2\;,
\end{equation*}
where $|A|$ indicates the cardinality of a finite set $A$. Denote by
$b_1 < b_2 < \cdots <b_\kappa $ the elements of $\{a_1, \dots, a_m\}$
whose associated exponents are $\alpha$.

The definition of the state space of the birth and death process
requires some notation. Fix $N\ge 1$, $1\le i\le m-1$, and let 
\begin{eqnarray*}
\!\!\!\!\!\!\!\!\!\!\!\!\! &&
k^N_{0} \;=\; \min\Big\{ k\ge 0 \;:\; a > a_{1} 
- (k+1)/N \Big\}\; , \\
\!\!\!\!\!\!\!\!\!\!\!\!\! && \quad
k^N_{i} \;=\; \min\Big\{ k\ge 0 \;:\; a_i + (k+1)/N > a_{i+1} 
- (k+1)/N \Big\}\;, \\
\!\!\!\!\!\!\!\!\!\!\!\!\! && \qquad
k^N_{m} \;=\; \min\Big\{ k\ge 0 \;:\; a_m + (k+1)/N > b \Big\}\;.
\end{eqnarray*}
For $1\le i\le m-1$, set 
\begin{eqnarray*}
\!\!\!\!\!\!\!\!\!\!\!\!\! &&
G_{N,0} \;=\; \big\{a_1 - k^N_0/N, \dots, a_1 - 1/N, a_1\big\}\;, \\
\!\!\!\!\!\!\!\!\!\!\!\!\! && \quad 
G_{N,i} \;=\; \big\{a_i +1/N, a_i + 2/N, \dots, a_i + k^N_i/N, 
a_{i+1} - k^N_i/N, \dots, a_{i+1}\big\}\;, \\
\!\!\!\!\!\!\!\!\!\!\!\!\! && \qquad 
G_{N,m} \;=\; \big\{a_m+1/N, a_m+2/N, \dots, a_m +k^N_m/N\big\}
\end{eqnarray*}
and let the state space $E_N$ be $\bigcup_{i=0}^m G_{N,i}$. Note that
the exact definition of $E_N$ is not important for the meta-stability
behavior discussed in this section. The elements of $E_N$ are denoted
by the letters $x$, $y$, $z$. Two points $x<y$ are said to be
neighbors in $E_N$ if there is no $z$ in $E_N$ such that $x<z<y$.

Let $\nu_N$ be the probability measure on $E_N$ defined by
\begin{equation*}
\nu_N(x) \;=\; 
\left\{
\begin{array}{ll}
\displaystyle{
\frac 1{Z_N} \frac 1{H(x)} }& \text{if } x\not\in \{a_1, \dots,
a_m\}\;, \\
\displaystyle{
\frac 1{Z_N} N^{\alpha_i}} & \text{if } x= a_i \text{ for some $1\le
  i\le m$} \;.
\end{array}
\right.
\end{equation*}
In this formula $Z_N$ is a normalizing constant. An elementary
computation shows that
\begin{equation}
\label{ef04}
\lim_{N\to\infty} \frac{Z_N}{N^\alpha} \;=\;
\sum_{i=1}^\kappa \Big\{ 1 + \sigma_i \sum_{k\ge 1} \frac
1{k^\alpha}\Big\} \;.
\end{equation}
where $\sigma_i=1$ if $b_i\in \{a,b\}$ and $\sigma_i=2$ otherwise. In
particular, if we denote
\begin{equation*}
 m(b_i) \;=\; 1 + \sigma_i \sum_{k\ge 1} \frac 1{k^\alpha}\;, 
\quad \text{for } i=1, \dots, \kappa
\end{equation*}
then
\begin{equation}
\label{g01}
 \lim_{N\to\infty} \nu_N(b_i) \;=\; 
\Big\{ \sum_{i=1}^{\kappa} m(b_i) \Big\}^{-1} \;>\; 0
\end{equation}
for every $1\le i\le \kappa$.

We now fix a convenient sequence of neighborhoods around each point
$b_i$, $i=1,\dots,\kappa$. Let $(\ell_N)_{N\ge 1}$ be a sequence of
positive integers, both small for the macroscopic scale and large for
the microscopic scale:
\begin{equation*}
 \lim_{N\to\infty} N^{-1}\ell_N \;=\; 0 \quad 
\text{and}\quad \lim_{N\to\infty}\ell_N\;=\; \infty \;.
\end{equation*}
For each $b_i$, $1\le i\le \kappa$, we define
$$
\mc E^i_N \;:=\; E_N \cap \Big[\,b_i - \frac{\ell_N}{N}\,,\, 
b_i + \frac{\ell_N}{N}\,\Big].
$$
Notice that, because $N^{-1}\ell_N \to 0$, for $N$ large enough we
have $\mc E^i_N \subseteq V_{b_i}$ for every $1\le i \le \kappa$. In
particular, for $N$ large enough, $\mc E^i_N\cap\mc E^j_N=\phi$ for
all $i\not = j$. Moreover, since $\ell_N\to\infty$,
\begin{equation}\label{nuE}
 \lim_{N\to\infty} \nu_N(\Delta_N) \;=\; 0 \quad\text{and} 
\quad \lim_{N\to\infty}\nu_N(\mc E^i_N) \;=\; \frac{m(b_i)}
{\sum_{j=1}^{\kappa} m(b_j)}
\end{equation}
for all $1\le i\le \kappa$, where $\Delta_N := E_N \setminus
(\cup_{i=1}^{\kappa} \mc E^i_N)$.

In view of the theory presented in Section \ref{sec1}, this
concentration of the measure on small macroscopic subsets suggests
that a Markov process on $E_N$, reversible with respect to $\nu_N$, is
a good candidate to exhibit a meta-stable behavior with $\{b_1, \dots,
b_\kappa\}$ as meta-stable points. 

Fix a positive function $\lambda: [a,b]\to \bb R_+$ bounded above and
below by a strictly positive constant:
\begin{equation*}
0\;<\; \delta \;\le\; \lambda (x) \;\le\; \delta^{-1}\;.
\end{equation*}
This assumption is not really needed but we do not seek optimal
assumptions in this subsection. Consider a birth and death process
$\{Z^N_t : t\ge 0\}$ on $E_N$ with rates given by
\begin{equation*}
R_N(x,y) \;=\; 
\left\{
\begin{array}{ll}
{\displaystyle
N^{1+\alpha} \, \lambda(x)} & \text{if $x>y$,}  \\
{\displaystyle
N^{1+\alpha} \, \lambda(y) \, \frac{\nu_N(y)}{\nu_N(x)}} & 
\text{if $x<y$}\;,
\end{array}
\right.
\end{equation*}
provided $x$ and $y$ are neighbors in $E_N$. The process is of course
reversible with respect to the measure $\nu_N$. As we shall see in the
proof of the next result $N^{1+\alpha}$ is the correct scale to
observe a non-trivial meta-stable behaviour.

\begin{theorem}
\label{bd}
Assume that $(\ell_N)_{N\ge 1}$ is chosen such that $\ell_N\to\infty$
and
\begin{equation}\label{ell}
\lim_{N\to\infty}\frac{(\ell_N)^{2\alpha+1}}{N^{\alpha+1}} \;=\; 0\;.
\end{equation}
Then, $(Z^N_t)_{N\ge 1}$ is meta-stable in the sense of Definition
\ref{metadef}, with meta-states given by $(\mc E^i_N)_{1\le i\le
  \kappa}$. The meta-stable behavior is described by a Markov process
on $\{1,\dots,\kappa\}$ with rates given by
\begin{eqnarray*}
\!\!\!\!\!\!\!\!\!\!\!\!\!\!\!\!
&& r(i, i+1) \;=\; \frac 1{m(b_i)} \, \frac
1{\int_{b_i}^{b_{i+1}} \{H(u)/\lambda(u)\} \, du}\; , \\
\!\!\!\!\!\!\!\!\!\!\!\!\!\!\!\!
&& \quad r(i+1, i) \;=\; \frac 1{m(b_{i+1})} \, \frac
1{\int_{b_i}^{b_{i+1}} \{H(u)/\lambda (u)\} \, du}\;,
\end{eqnarray*}
for $1\le i<\kappa$.
\end{theorem}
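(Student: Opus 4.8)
The plan is to derive Theorem~\ref{bd} from Theorem~\ref{teo2}, i.e.\ to verify conditions {\bf (C1)}, {\bf (H2)} and {\bf (H3)} for the reversible chain $Z^N_t$, taking as meta-stable site $\xi^i_N=b_i$ (which belongs to $E_N$ since $b_i\in\{a_1,\dots,a_m\}$) and as $\zeta^i_N$ a point of $\partial_i\Delta_N$ lying on the side of $\mc E^i_N$ facing a neighbouring well. Everything rests on the one-dimensional electrical-network structure of $Z^N_t$: for neighbours $x<y$ in $E_N$ the conductance is $c_N(x,y):=\nu_N(x)R_N(x,y)=N^{1+\alpha}\lambda(y)\nu_N(y)$, the effective resistance $R^N_{\mathrm{eff}}(p,q)$ between two points is the sum of $1/c_N$ along the unique path joining them, the capacity between two points is the reciprocal of that resistance, and hitting probabilities of a birth and death chain are ratios of effective resistances (gambler's ruin). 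Two facts from the set-up will be used throughout: $Z_N\sim M N^\alpha$ with $M:=\sum_{l=1}^\kappa m(b_l)$, by \eqref{ef04}; and, inside $V_{b_i}$, $\nu_N(b_i+k/N)=\big[Z_N(|k|/N)^\alpha\big]^{-1}$ for $k\neq0$ while $\nu_N(b_i)=N^\alpha/Z_N$.

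Write $u_i<v_i$ for the endpoints of the interval $\mc E^i_N$, so that $v_i\to b_i$ and $u_{i+1}\to b_{i+1}$ as $\ell_N/N\to0$. Summing $1/c_N$ over the edges of $E_N$ contained in $[v_i,u_{i+1}]$ gives $R^N_{\mathrm{eff}}(v_i,u_{i+1})=\sum_{z}\frac{Z_N H(z)}{N^{1+\alpha}\lambda(z)}+o(1)$, where the $o(1)$ absorbs the finitely many grid points equal to some $a_j$ with $\alpha_j<\alpha$ (each contributing $O(N^{-1-\alpha_j})$); since $Z_N/N^{1+\alpha}\sim M/N$ the main term is a Riemann sum of mesh $1/N$ that converges to $M\int_{b_i}^{b_{i+1}}H(u)/\lambda(u)\,du$, a finite positive number. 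The birth and death structure forces $R^{\mc E}_N(\eta,j)=0$ whenever $\eta\in\mc E^i_N$ and $|i-j|\ge2$ (any excursion of $Z^N_t$ reaching $\mc E^j_N$ from $\mc E^i_N$ must previously hit the intermediate wells), so $r_N(i,j)=0$ there; while for $j=i+1$ the only $\eta\in\mc E^i_N$ from which an excursion into $\Delta_N$ can reach $\mc E^{i+1}_N$ is $v_i$, and it is then reached only at $u_{i+1}$, so gambler's ruin yields $\nu_N(v_i)R^{\mc E}_N(v_i,i+1)=\Cap_N(v_i,u_{i+1})=R^N_{\mathrm{eff}}(v_i,u_{i+1})^{-1}$. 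Dividing by $\nu_N(\mc E^i_N)\to m(b_i)/M$ from \eqref{nuE} gives $r(i,i+1)=[\,m(b_i)\int_{b_i}^{b_{i+1}}H/\lambda\,]^{-1}$ and, symmetrically, the stated $r(i+1,i)$; thus {\bf (C1)} holds with the announced limits. The same resistance computation handles {\bf (H2)}: $\nu_N(\Delta_N)\to0$ by \eqref{nuE}, and $\Cap_N(\zeta^i_N,\check{\mc E}^i_N)\ge\Cap_N(\zeta^i_N,\mc E^{i\pm1}_N)$ for the adjacent well toward which $\zeta^i_N$ was chosen, which converges to a positive constant (same limit, with one extra edge contributing $O(1/N)$); hence {\bf (H2')}, and a fortiori {\bf (H2)}, is satisfied.

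Condition {\bf (H3)} is the delicate point, and it is here that the hypothesis \eqref{ell} enters. For $\eta=b_i+k/N\in\overline{\mc E^i_N}$ one has $|k|\le\ell_N+1$ and the path to $b_i$ stays inside $V_{b_i}$, where $c_N(b_i+(l-1)/N,b_i+l/N)\sim N^{1+\alpha}/(M l^\alpha)$; hence $R^N_{\mathrm{eff}}(\eta,b_i)\sim\frac{M}{N^{1+\alpha}}\sum_{l=1}^{|k|}l^\alpha=O(\ell_N^{\alpha+1}/N^{1+\alpha})$, so $\min\{\Cap_N(\eta,\xi^i_N):\eta\in\overline{\mc E^i_N}\}$ is of order $N^{1+\alpha}/\ell_N^{\alpha+1}$. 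On the other hand $\partial\mc E^{i,j}_N$ is empty when $|i-j|\ge2$ (so the second minimum equals $1$) and is the single endpoint $v_i$ or $u_i$ of $\mc E^i_N$, at distance of order $\ell_N/N$ from $b_i$, when $|i-j|=1$, so $\min\{\nu_N(\eta):\eta\in\partial\mc E^{i,j}_N\}$ is of order $\ell_N^{-\alpha}$. The product in {\bf (H3)} is therefore of order $N^{1+\alpha}/\ell_N^{\alpha+1}$ when $|i-j|\ge2$ and of order $N^{1+\alpha}/\ell_N^{2\alpha+1}$ when $|i-j|=1$, and both tend to $\infty$ by \eqref{ell}. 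With {\bf (C1)}, {\bf (H2)} and {\bf (H3)} established, Theorem~\ref{teo2} yields the assertion.

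The one genuinely delicate step is the balance in {\bf (H3)}: one must identify the exponent $\alpha+1$ produced by the capacity between $b_i$ and a point at lattice-distance $\ell_N$ (the sum $\sum l^\alpha$) and the exponent $\alpha$ produced by the $\nu_N$-mass of that same boundary point, and check that their sum is precisely the exponent $2\alpha+1$ controlled by \eqref{ell}. Everything else is bookkeeping: computing the conductances, recognising the effective-resistance sums as Riemann integrals, and verifying that the low-exponent zeros $a_j$ and the irregular middle gaps of the $G_{N,j}$ contribute negligibly.
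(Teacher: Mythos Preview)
Your proof is correct and follows essentially the same route as the paper's: both verify conditions {\bf (C1)}, {\bf (H2')} and {\bf (H3)} of Theorem~\ref{teo2} with $\xi^i_N=b_i$, exploiting the explicit one-dimensional capacity formula (your electrical-network language is exactly the paper's formula \eqref{cap1}). The only cosmetic difference is that for {\bf (C1)} the paper groups the wells as $G_1=\cup_{j\le i}\mc E^j_N$, $G_2=\cup_{j>i}\mc E^j_N$ and invokes Lemma~\ref{s07}(c) to obtain $\nu_N(\mc E^i_N)r_N(i,i+1)=\Cap_N(v_i,u_{i+1})$, whereas you reach the same identity directly via the birth--death/gambler's-ruin fact that trace conductances equal effective conductances.
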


\begin{proof}
We shall verify each hypothesis of Theorem \ref{teo2}. Let $\xi^i_N =
b_i$ for $1\le i\le \kappa$. The first condition of {\bf (H2')} has
been derived in \eqref{nuE}. 

To check the second condition of {\bf (H2')} and {\bf (H3)}, we take
advantage from the one-dimensional setting to get explicit expressions
for capacities.  Let $x^N<y^N$ be points in $E_N$. Recall that
$\Cap(x^N,y^N)=D_N(f_{x^N,y^N})$ where $f_{x^N,y^N}:E_N\mapsto \bb R$
solves the equation $L_Nf_{x^N,y^N}(z)=0$ for $z\not \in \{x^N,y^N\}$
with boundary conditions $f_{x^N,y^N}(x^N)=1$ and
$f_{x^N,y^N}(y^N)=0$. An elementary computation gives that $f(z)=1$
for $z\le x^N$, $f(z)=0$ for $z\ge y^N$ and
\begin{equation*}
f(z+1/N) - f(z) \;=\; \frac{ \big\{ \nu_N(z) R_N(z,z+1/N) \big\}^{-1}}
{ \sum_{z=x^N}^{y^N-1/N} \big\{ \nu_N(z) R_N(z,z+1/N) \big\}^{-1}}\;
\end{equation*}
for $z\in E_N \cap [x^N,y^N[\,$. Hence,
\begin{equation}\label{cap1}
\Cap(x^N,y^N) \;=\; \frac{1}{ \sum_{z=x^N}^{y^N-1/N} 
\big\{ \nu_N(z) R_N(z,z+1/N) \big\}^{-1}}\;.
\end{equation}
In last two formulae, there is a slight abuse of notation since $E_N$
is not the set $\{z/N : z\in\bb Z \cap [aN, bN]\}$, but the meaning is
clear. In particular, it follows from (\ref{cap1}) that, if $x^N\to
a'$ and $y^N\to b'$ for some $0\le a' < b'\le 1$ then
\begin{equation}\label{capl}
 \lim_{N\to\infty}\Cap(x^N,y^N) \;=\; \frac{1}{\sum_{i=1}^{\kappa}
   m(b_i)}\;\Big\{\;\int_{a'}^{b'} \{H(u)/\lambda (u)\}
\, du \;\Big\}^{-1}\;>\;0\;.
\end{equation}

We are now in a position to check the second condition of hypothesis
{\bf (H2')}. For each $1\le i\le \kappa$ and $N\ge 1$, we choose the
point $c^N_i\in\partial_i\Delta_N$ as $c^N_i=b_i - (\ell_N+1)/N$ if
$b_i=b$ and $c^N_i=b_i + (\ell_N+1)/N$ otherwise. Taking advantage of
the one-dimensional setting, it is not difficult to show that
$\Cap(c^N_i , \check{\mc E}^i_N)$ is equal to
$$
\mb{1}\{i<\kappa\}\; \Cap(c^N_i,b_{i+1} - \ell_N/N) \;+\; 
\mb{1}\{1<i\}\; \Cap(c^N_i,b_{i-1} + \ell_N/N)\;,
$$
for $1\le i\le \kappa$. Therefore, hypotheses {\bf (H2')} follows from
(\ref{capl}).

Recall definition of subsets $\partial\mc E^{i,j}_N$, $\overline{\mc
  E^i_N}$ and $\partial_i\Delta_N$ of $E_N$ introduced in the previous
section.  To check hypotheses {\bf (H3)} notice that in our model,
$$
\overline{\mc E^i_N} \;=\; E_N\cap [\,b_i-(\ell_N+1)/N\,,\,
b_i+(\ell_N+1)/N\,] \quad \text{for $1\le i\le \kappa$}\;.
$$
Observe that for any $1\le i \le \kappa$ and $N$ large enough,
$H(x)=|x-b_i|^{\alpha}$ for all $x\in\overline{\mc E^i_N}$. In
consequence, by using (\ref{cap1}), it is easy to see that there
exists a positive constant $C_0$ depending only on $\lambda$ such that
\begin{equation}
\Cap(x,b_i) \;\ge\; \frac{C_0 N^{2\alpha + 1}}
{Z_N(\ell_N+1)^{\alpha +1}}\;,
\end{equation}
for any $1\le i\le\kappa$ and $x\in\overline{\mc E^i_N}$. In
particular, by (\ref{ef04}),
$$
\lim_{N\to\infty}\min \big\{\, \Cap(x,b_i) : x \in 
\overline{\mc E^i_N} \,\big\}\;=\;\infty\;
$$
since $N^{-1}\ell_N\to 0$. Moreover, for any two points $i\not =j$ in
$\{1,\dots,\kappa\}$, $\partial\mc E^{i,j}_N = \{b_i +
(j-i)\ell_N/N\}$ if $|i-j|=1$ and $\partial\mc E^{i,j}_N=\phi$
otherwise. Therefore, in view of assumption (\ref{ell}), an elementary
calculation shows that hypotheses {\bf (H3)} is in force.

It remains to check hypotheses {\bf (C1)}. Notice that, in our model,
$r_N(i,j)=0$ for $|i-j|>1$. For $G_1,G_2$ disjoint subsets of $\mc
E_N$, let $r_N(G_1,G_2)$ be given by
$$
r_N(G_1,G_2)\;:=\; \frac{1}{\nu_N(G_1)}
\sum_{\substack{x\in G_1 \\ y \in G_2}}R^{\mc E}_N(x,y)\nu_N(x)\;
$$
in such a way that, for $i\not =j$, $r_N(\mc E^i_N, \mc E^j_N)$
coincides with $r_N(i,j)$. Fix an arbitrary $1\le i<\kappa$. Observe
that letting $G_1=\cup_{j\le i} \mc E^j_N$, and $G_2=\cup_{j>i}\mc
E^j_N$ we have
\begin{eqnarray*}
\nu_N(\mc E^i_N)\,r_N(i,i+1) &=& \nu_N(b_i+\frac{\ell_N}{N})\,
R^{\mc E}_N(b_i + \frac{\ell_N}{N}, b_{i+1} - \frac{\ell_N}{N}) \\
&=& \nu_N (G_1)\, r_N(G_1,G_2)\;.
\end{eqnarray*}
Therefore, by using formula (c) of Lemma \ref{s07} for $r_N(G_1,G_2)$,
\begin{eqnarray*}
 r_N(i,i+1) &=& \frac{\nu_N(G_1)\,r_N(G_1,G_2)}{\nu_N(\mc E^i_N)} 
\;=\; \frac{\Cap(G_1,G_2)}{\nu_N(\mc E^i_N)}  \\
&=& \frac{\Cap(b_i+\ell_N/N,b_{i+1} - \ell_N/N)}{\nu_N(\mc E^i_N)} \;\cdot
\end{eqnarray*}
Analogously, we obtain
$$
r_N(i,i-1)\;=\;\frac{\Cap(b_{i}-\ell_N/N , b_{i-1} + \ell_N/N)}
{\nu_N(\mc E^{i}_N)} \;
$$
for any $1<i\le \kappa$. Therefore, by (\ref{capl}) and (\ref{nuE}),
$$
\lim_{N\to\infty} r_N(i,i+1) \;=\; \frac 1{m(b_i)} \, 
\Big\{ \,\int_{b_i}^{b_{i+1}} \{H(u)/\lambda(u)\} \, du \,\Big\}^{-1}\;
$$
and
$$
\lim_{N\to\infty} r_N(i+1,i) \;=\; \frac 1{m(b_{i+1})} \, 
\Big\{\,\int_{b_i}^{b_{i+1}} \{H(u)/\lambda(u)\} \, du \,\Big\}^{-1}\;
$$
for any $1\le i < \kappa.$ We are done.
\end{proof}

\subsection{Condensed zero-range processes}

We prove in this subsection Theorem \ref{teozr}. The case $\kappa =2$
is a particular case of the birth and death processes considered in
the previous section. Indeed, since the total number of particles is
settled to be equal to $N$, the number of particles on the first site,
$\eta(1)$, is a Markov process on $\{0, 1, \dots, N\}$. The density of
particles speeded up by $N^{1+\alpha}$, $\{ \eta_{t N^{1+\alpha}} (1)
: t\ge 0\}$, corresponds to the birth and death process on $\{0, 1/N,
\dots, 1\}$ associated to the functions $H(x) = x^\alpha
(1-x)^\alpha$, $\lambda_N(x) = (x/x-N^{-1})^\alpha \mb 1\{x\ge 2/N\} +
\mb 1\{x=1/N\}$. In contrast with the set-up of the previous section,
$\lambda_N$ depends on $N$, but all the arguments go through. Observe
that $\lambda_N$ converges to $1$ as $N\uparrow\infty$.

By Theorem \ref{bd}, on the scale $N^{1+\alpha}$, the zero-range
process on two sites has a meta-stable behavior with meta-states $\mc
E^1_N = \{\eta : \eta(1) \ge N - \ell_N\}$, $\mc E^2_N = \{\eta :
\eta(2) \ge N - \ell_N\}$, and asymptotic Markov evolution with rates
$r(1,2) = r(2,1) = (m \int_0^1 u^\alpha (1-u)^\alpha du)^{-1}$, where
$m = 1 + \sum_{k\ge 1} k^{-\alpha}$.
\medskip

We now turn to the general case. Recall the definitions introduced in
Subsection \ref{2.4}. We first estimate the order of $(\theta_N)_{N\ge
  1}$. For each $x\in S$ and $N\ge 1$, let the configuration
$\xi^x_N\in\mc E^x_N$ be given by $\xi^x_N(x)=N$. Denote by $D_N$ the
Dirichlet form associated to the generator $L_N$.

In the next lemma and below, we adopt the convention that $C_0$ stands
for a positive finite constant depending only on $\kappa$ and $\alpha$
whose value may change from line to line.

\begin{lemma}\label{capor}
There exists a constant $C_0>0$ such that
$$
\frac 1 {C_0}\;<\; \frac{\theta_N}{N^{1+\alpha}}\;<\; C_0
$$
for all $N\ge 1$.
\end{lemma}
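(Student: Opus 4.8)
The plan is to establish the two-sided bound on $\theta_N = \Cap(\mc E^x_N, \check{\mc E}^x_N)^{-1}$ by producing matching upper and lower bounds on the capacity $\Cap(\mc E^x_N, \check{\mc E}^x_N)$ of order $N^{-(1+\alpha)}$, using the variational (Dirichlet form) characterization. For the \emph{upper} bound on the capacity (hence lower bound on $\theta_N$), I would exhibit an explicit test function $f\in\mc B_N(\mc E^x_N,\check{\mc E}^x_N)$ and estimate $D_N(f)$. The natural choice is to transport the one-dimensional guess from the $\kappa=2$ reduction described just above: since the condensate at site $x$ competes with the other sites, a good test function depends essentially on $\eta(x)$ alone, interpolating linearly (in the appropriate $\nu_N$-weighted sense, as in the formula for $f_{x^N,y^N}$ preceding \eqref{cap1}) between $\mc E^x_N$ where $\eta(x)$ is large and $\check{\mc E}^x_N$ where some other coordinate is large. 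Plugging such an $f$ into $D_N(f) = \sum g(\eta(x))\mb 1\{\eta(x)>0\}(f(\sigma^{x,y}\eta)-f(\eta))^2 \nu_N(\eta)/2$ and using $\nu_N(\eta)\asymp N^\alpha/\prod_z p(\eta(z))$ with $g\le 2^\alpha$ reduces the sum to a geometric-type series dominated by its largest term, which one checks is of order $N^{-(1+\alpha)}$.

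For the \emph{lower} bound on the capacity (hence upper bound on $\theta_N$), I would use the dual, flow-based (Thomson / Dirichlet principle) characterization: any unit flow from $\mc E^x_N$ to $\check{\mc E}^x_N$ gives $\Cap_N(\mc E^x_N,\check{\mc E}^x_N)^{-1}\le \sum_{\text{edges}} \text{(flow)}^2/(\text{conductance})$, and conversely a lower bound on capacity follows by exhibiting such a flow with small energy, or more simply by a direct monotonicity/restriction argument. Concretely, since every trajectory from $\mc E^x_N$ to $\check{\mc E}^x_N$ must pass through the ``bottleneck'' configurations where $\eta(x)$ decreases through intermediate values, one can collapse the chain onto the coordinate $\eta(x)$ and compare with the birth-and-death capacity formula \eqref{cap1}; the effective conductances along that one-dimensional skeleton are bounded below by $c\,\nu_N(\text{bottleneck})\,N$, and summing the reciprocals over the $O(N)$ bottleneck layers produces an upper bound of order $N^{\alpha+1}$ for $\theta_N$. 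Alternatively, because the process projected onto $\eta(x)$ is itself (after the $\kappa=2$-type identification) a birth-and-death chain, one gets $\Cap_N(\mc E^x_N,\check{\mc E}^x_N)\ge \Cap$ of the corresponding birth-and-death chain, which is computed exactly by \eqref{cap1} and is of order $N^{-(1+\alpha)}$ by the computation $\sum_{k} k^\alpha/N^{\alpha}\cdot (1/N)\asymp N$.

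The main obstacle I anticipate is bookkeeping the combinatorial factor coming from the $\kappa-1$ ``other'' sites: unlike the clean two-site birth-and-death picture, in $\mc E^x_N$ the complementary mass $N-\eta(x)\le \ell_N$ is spread over $\kappa-1$ coordinates, so $\nu_N$ restricted to a level set $\{\eta(x)=N-j\}$ is a sum over compositions of $j$ into $\kappa-1$ parts, contributing a polynomial-in-$j$ multiplicity. One must check this multiplicity does not disturb the $N^{1+\alpha}$ order — it contributes only subleading corrections because the dominant contribution to both the test-function energy and the flow energy comes from the layers with $\eta(x)$ close to $N$ (i.e.\ $j$ of order $1$), where $\nu_N$ is comparable to $\nu_N(\xi^x_N)\asymp N^\alpha/Z_{N,\kappa}$ and hence, by \eqref{zeta}, of order $N^\alpha$. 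Keeping track of the uniformity in $N$ and the independence from $x$ (which follows from the permutation symmetry of $L_N$ and $\nu_N$ under relabelling of sites) completes the argument; everything else is a routine summation of the same geometric series that appears in \eqref{ef04}.
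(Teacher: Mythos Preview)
Your overall strategy matches the paper's: an explicit test function $f(\eta)=\varphi(\eta(x))$ for the upper bound on the capacity, and a single-path Cauchy--Schwarz (equivalently, Thomson unit-flow) argument along $\eta^{(j+1)}=\sigma^{x,y}\eta^{(j)}$ from $\xi^x_N$ to $\xi^y_N$ for the lower bound.

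However, your handling of the ``multiplicity obstacle'' contains a genuine gap. The claim that the dominant contribution to the test-function energy and the flow energy comes from layers with $j=N-\eta(x)$ of order $1$ is false. Along the single path one has $1/\nu_N(\eta^{(j)})\asymp j^{\alpha}(N-j)^{\alpha}/N^{\alpha}$, which is largest for $j\sim N/2$; that is precisely where the sum $\sum_{j}1/\nu_N(\eta^{(j)})\asymp N^{1+\alpha}$ gets its main mass. The same holds for the energy of the optimal test function. You therefore cannot dismiss the combinatorial factor from the $\kappa-1$ other sites by localizing near the wells. The paper's resolution is different and clean: it writes
\[
\nu_N\big(\{\eta:\eta(x)=N-n\}\big)\;=\;\frac{Z_{N,2}\,Z_{n,\kappa-1}}{Z_{N,\kappa}}\,\mu_N(n),
\]
where $\mu_N$ is the two-site invariant measure, and then invokes \eqref{zeta} to bound $Z_{n,\kappa-1}$ uniformly in $n$. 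This yields $\nu_N(\{\eta(x)=N-n\})\asymp\mu_N(n)$ uniformly and reduces both bounds to the $\kappa=2$ birth-and-death computation. You should replace the localization heuristic with this comparison.

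A second, smaller issue: the remark that the process projected onto $\eta(x)$ is itself a birth-and-death chain is incorrect for $\kappa>2$, since the arrival rate at $x$ depends on the full configuration of the other sites; $\eta_t(x)$ is not Markov. The paper never uses Markovianity of the projection---it works directly with the Dirichlet form and the single-path flow---so this alternative route does not go through as stated.
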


\begin{proof}
For each $N\ge 1$, we fix the set $I_N:=\{0,1,\dots,N\}$ and the
measure
$$
\mu_N(n)=\frac{1}{Z_{N,2}}\;\frac{N^{\alpha}}{p(N-n)p(n)}\;,\quad n\in
I_N\;,
$$
so that, if $\kappa=2$, $\nu_N(\eta)=\mu_N(\eta(x))$ for any $x\in
\{1,2\}$. 

For $\kappa \ge 2$, given $x,y\in S$, $x\neq y$ and $N\ge 1$, a simple
calculation shows that for any $\eta'\in E_N$ such that $\eta'(x)=N-n$
and $\eta'(y)=n$ for some $n\in I_N$ we have
\begin{equation*}
\frac{Z_{N,2}}{Z_{N,\kappa}}\mu_N(n) \;=\; 
\nu_N(\eta') \;\le\; \nu_N \big(\{\eta : \eta(x)=N-n\}\big) 
\;=\; \frac{Z_{N,2} Z_{n,\kappa-1}}{Z_{N,\kappa}} \mu_N(n)\;.
\end{equation*}
In particular, there exists a constant $C_0 >0$ such that
\begin{equation}\label{numu}
\frac{1}{C_0} \;<\; \frac{\nu_N(\eta')}{\mu_N(n)} 
\;\le\; \frac{\nu_N \big(\{\eta : \eta(x)=N-n\}\big)}{\mu_N(n)} 
\;<\; C_0\;,
\end{equation}
for all $N\ge 1$ and $n\in I_N$.

Fix $x\in S$ and $N\ge 1$. To get a lower bound for $\Cap_N(\mc
E^x_N,\check{\mc E}^x_N)$, fix an arbitrary $f\in\mc B_{N}(\mc
E^x_N,\check{\mc E}^x_N)$ and choose a site $y\in
S\backslash\{x\}$. Consider the path $\eta^{(j)},$ $j=0,1,\dots,N$
from $\eta^{(0)}=\xi^x_N$ to $\eta^{(N)}=\xi^y_N$ satisfying
$\eta^{(j+1)}=\sigma^{x,y}\eta^{(j)}$ for $j=0,1,\dots N-1.$ By using
Cauchy-Schwarz inequality and (\ref{numu}) we obtain
\begin{eqnarray}
1 &=& \Big\{\; \sum_{j=0}^{N-1} \big[\, f(\eta^{(j+1)}) -
f(\eta^{(j)}\,)\big] 
\; \Big\}^2 \nonumber \\
&\le& D_N(f)\, \sum_{j=0}^{N-1} \frac{1}{\nu_N(\eta^{(j)})
g(\eta^{(j)}(x))}\, \label{dinu} \\
&\le& D_N(f) \sum_{j=0}^{N-1}\frac{C_0}{\,\mu_N(j)}\;\cdot 
\nonumber
\end{eqnarray}
Therefore,
\begin{equation*}
\Cap_N(\mathcal E^x_N,\check{\mc E}_N^x) 
\;\ge\; C_0\; \Big\{ \sum_{j=0}^{N-1} \frac{1}{\mu_N(j)} \Big\}^{-1}
\;\ge\; \frac{C_0}{N^{\alpha +1}}\;\cdot
\end{equation*}
To get a lower bound, consider the function $\varphi_N:I_N\mapsto
\bb R$ defined as $\varphi_N(n)=0$ for $n\le \ell_N$, $\varphi_N(n)=1$
for $n\ge N - \ell_N$ and
$$
\varphi_{N}(n) - \varphi_{N}(n-1) \;=\; \Big\{ \sum_{k=\ell_N+1}^{N-\ell_N}
\frac{1}{\mu_N(k)g(k)} \Big\}^{-1} \frac{1}{\mu_N(n)g(n)}
$$
for $\ell_N +1 \le n \le N - \ell_N$. Then, define the function
$f_N:E_N\mapsto \bb R$ by $f_N(\eta):=\varphi_{N}(\eta(x))$. Hence,
$f_N\in\mc B_N(\mc E^x_N,\check{\mc E}^x_N)$ and so $\Cap_N(\mc
E^x_N,\check{\mc E}^x_N)$ is less than or equal to
\begin{equation*}
 D_N(f_N) \;=\; \sum_{n=1}^{N}\big[ \varphi_{N}(n) - \varphi_{N}(n-1)
 \big]^2 (\kappa-1)g(n)\nu_N\big( \{\eta : \eta(x)=n \} \big)\;.
\end{equation*}
By using (\ref{numu}) and definition of $\varphi_N$ we obtain
$$
\Cap_N(\mc E^x_N,\check{\mc E}^x_N) \;\le\;
\frac{C_0}{N^{\alpha+1}}\;\cdot
$$
\end{proof}

We now turn to the 

\noindent \textbf{Proof of Theorem \ref{teozr}.}  Since the process is
reversible, by Theorem \ref{teo2} and by the definition of $\theta_N$,
we just need to verify conditions {\bf (H2)}, {\bf (H3)}.

Since $\ell_N\to\infty$, by (\ref{ln}), $N^{-1}\ell_N \to 0$. In
particular, an elementary computation shows that
$$
\lim_{N\to\infty}\nu_N(\Delta_N) \;=\; 0\quad \text{and}\quad 
\lim_{N\to\infty}\nu_N(\mc E^x_N) \;=\; 1/\kappa
$$ 
for every $x\in S$. The first identity corresponds to the first
assumption of {\bf (H2')}.

For the second claim in {\bf (H2')}, notice that in this setting
$$
\partial_x\Delta_N \;=\; \{\eta\in E_N : \eta(x)=N-\ell_N-1\}\;.
$$
For each $x\in S$ and $N\ge 1$ we choose the configuration
$\zeta^x_N\in\partial_x\Delta_N$ such that $\zeta^x_N(x)=N-\ell_N-1$
and $\zeta^x_N(y)=\ell_N+1$ for some $y\neq x.$ It is not difficult to
adapt the proof of the lower bound for $\Cap_N(\mc E^x_N,\check{\mc
  E}^x_N)$ to show that
$$
\Cap_N(\zeta^x_N,\check{\mc E}^x_N) \;\ge\; \frac{C_0}{N^{\alpha+1}}\;.
$$
Therefore, the second condition in {\bf (H2')} follows from Lemma
\ref{capor} if one recalls that $\Cap$ stands for the capacity
associated to the generator $L_N$ which has not been speeded up.

Finally, we check hypothesis {\bf (H3)}. In this setting,
$$
\partial\mc E^{x,y}_N \;=\; \{\eta\in \mc E_N : \eta(x)=N-\ell_N\}
$$
and
$$
\overline{\mc E^x_N} \;=\; \{\eta\in \mc E_N : \eta(x) \ge N-\ell_N -1\}\;
$$
for any $x, y\in S$, $x\not = y$ and $N\ge 1$. Fix $x\in S$. It is not
difficult to prove that for any $\eta\in \overline{\mc E^x_N}$ and $N$
large enough
\begin{equation}\label{minu}
\nu_N(\eta) \;\ge\; \frac{1}{Z_{N,\kappa}}\;
\frac{1}{(\ell_N+1)^{\alpha(\kappa-1)}}\;\cdot 
\end{equation}
By similar reasons,
\begin{equation}\label{e1}
\min\{ \nu_N(\eta) : \eta \in \partial\mc E^{x,y}_N \} 
\;\ge\; \frac{C_0}{(\ell_N)^{\alpha (\kappa-1)}}\;\cdot
\end{equation}
Fix an arbitrary $\eta\in \overline{\mc E^x_N}$. Consider a path 
$$
\eta^{(j)}\in \overline{\mc E^x_N},\; j=0,1,\dots,p
$$
such that $\eta^{(0)}=\eta$, $\eta^{(p)}=\xi^x_N$, $p\le \ell_N$ and
$R_N(\eta^{(j)},\eta^{(j+1)})>0$ $\forall$ $0\le j< p$. Let $f$ be an
arbitrary function in $\mc B_N(\eta,\xi^x_N)$. By using Cauchy-Schwarz
as in (\ref{dinu}) we obtain
\begin{equation}\label{dnu}
D_N(f) \;\ge\; \Big\{ \sum_{j=0}^{p-1}\frac{1}{\nu_N(\eta^{(j)})}  
\Big\}^{-1}\;.
\end{equation}
So, it follows from (\ref{minu}) and (\ref{dnu}) that
\begin{equation}\label{e2}
\min\big\{\, \Cap_N(\eta,\xi^x_N) : \eta\in \overline{\mc E^x_N}\,\big\} 
\;\ge\; \frac{C_0 }{(\ell_N)^{\alpha(\kappa -1) +1}}
\end{equation}
for any $x\in S$. Finally, putting together (\ref{e1}) and (\ref{e2}),
and using Lemma \ref{capor}
$$
\theta_N \min\big\{\, \Cap_N(\eta,\xi^x_N) : 
\eta\in \overline{\mc E^x_N}\,\big\}\; 
\min\{ \nu_N(\eta) : \eta \in \partial\mc E^{x,y}_N \} 
\;\ge\; \frac{C_0 N^{\alpha +1}}{(\ell_N)^{2\alpha(\kappa-1) + 1}}\;\cdot
$$
Hence, hypotheses {\bf (H3)} follows from assumption (\ref{ln}).
\qed


\section{Meta-stability}\label{proof}

In this section we prove Theorem \ref{teo1}, Theorem \ref{teo2} and
Proposition \ref{corol}.

\subsection{Proof of Theorem \ref{teo1}}
We divide the proof of Theorem \ref{teo1} in two parts: proof of
condition (A) and proof of condition (B) in Definition \ref{metadef}.

We start with a replacement lemma which is the key point in the proof
of condition (A). For a function $V_N: \mc E_N \mapsto \bb R$, denote
by $\hat V_N$ its $\nu_N$-conditional expectation given the
$\sigma$-algebra generated by the sets $(\mc E^x_{N})_ {x\in S}$ :
\begin{equation*}
\hat V_N (\eta) \;=\; \frac 1{\nu_N(\mc E^x_{N})} 
\sum_{\xi\in \mc E^x_{N}} V(\xi) \;\nu_N(\xi)
\end{equation*}
for any $x\in S$ and $\eta$ in $\mc E^x_{N}$. Let us also denote
$$
\sigma_N(x)\;:= \; \max_{\eta\in\mc E^x_N}
\bb E^{\mc E}_{\eta}[H_{\xi^x_N}]\;.
$$

\begin{lemma}
\label{s03}
Fix a sequence of functions $V_N: \mc E_N \mapsto \bb R$ such that
$$
\lim_{N\to\infty}\sigma_N(x) \max_{\eta\in\mc E^x_N}
\big|V_N(\eta)\big| \;=\; 0\;.
$$
for any $x\in S$. Then, for any sequence $(\eta_N)_{N\ge 1}$
in $\mc E_N$,
\begin{equation*}
\lim_{N\to\infty} \bb E^{\mc E}_{\eta_N} \Big[ \int_0^t  
\big\{ V_N (\eta^{\mc E_N}_s) - \hat V_N (\eta^{\mc E_N}_s) \big\} 
\, ds \Big] \;=\; 0\;.
\end{equation*}
\end{lemma}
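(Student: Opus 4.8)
The plan is to reduce the time-integral estimate to a pointwise bound on the expected occupation time inside each well, and then to control that occupation time using the hitting time of the meta-stable site $\xi^x_N$ together with a renewal/regenerative argument. First I would observe that since $\hat V_N$ is the $\nu_N$-conditional expectation of $V_N$ on the $\sigma$-algebra generated by $(\mc E^x_N)_{x\in S}$, and since $\nu_N$ restricted and normalized to $\mc E_N$ is the invariant measure of the trace process $\eta^{\mc E_N}_t$ (reversibility is not needed here, only that the trace of a stationary process is stationary), the function $V_N-\hat V_N$ has zero average over each $\mc E^x_N$ with respect to the trace-invariant measure. Fixing $x\in S$, it therefore suffices to bound
$$
\bb E^{\mc E}_{\eta_N}\Big[\int_0^t \big(V_N-\hat V_N\big)(\eta^{\mc E_N}_s)\,
\mb 1\{\eta^{\mc E_N}_s\in\mc E^x_N\}\,ds\Big]
$$
for each $x$ separately, and then sum over the finitely many $x\in S$.

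Next I would decompose the time the trace process spends in $\mc E^x_N$ into excursions delimited by successive visits to $\xi^x_N$. Between consecutive visits to $\xi^x_N$ (looking only at the time accrued inside $\mc E^x_N$), the process performs i.i.d.\ excursions under $\bb P^{\mc E}_{\xi^x_N}$, and by the ergodic theorem for the cycle decomposition the ratio of the expected integral of $V_N-\hat V_N$ over one $\xi^x_N$-cycle to the expected length (restricted to $\mc E^x_N$) of that cycle equals the $\nu_N$-conditional average of $V_N-\hat V_N$ on $\mc E^x_N$, which is zero. Hence the expected integral over one full cycle of $(V_N-\hat V_N)\mb 1\{\cdot\in\mc E^x_N\}$ vanishes. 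Over the interval $[0,t]$ the number of completed cycles is controlled, and the only non-cancelling contributions come from (i) the initial incomplete excursion before the first visit to $\xi^x_N$, and (ii) the final incomplete excursion after the last visit to $\xi^x_N$ before time $t$. Each of these is a single excursion whose duration is, in expectation, at most $\max_{\eta\in\mc E^x_N}\bb E^{\mc E}_\eta[H_{\xi^x_N}] = \sigma_N(x)$ by the strong Markov property, so its contribution to the integral is bounded in absolute value by $2\,\sigma_N(x)\,\max_{\eta\in\mc E^x_N}|V_N(\eta)| + 2\,\sigma_N(x)\,\max_{\eta\in\mc E^x_N}|\hat V_N(\eta)|$, and since $|\hat V_N|\le\max|V_N|$ this is at most $C\,\sigma_N(x)\max_{\eta\in\mc E^x_N}|V_N(\eta)|$, which tends to $0$ by hypothesis. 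Summing over $x\in S$ finishes the argument.

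The main obstacle will be making the regenerative decomposition rigorous when the process may fail to return to $\xi^x_N$ at all within $[0,t]$, or may be in a different well $\mc E^y_N$ with $y\neq x$ for part of the time: one must be careful that the ``clock'' for the cycle structure in well $x$ is the time spent inside $\mc E^x_N$ only, and that excursions into other wells or into $\Delta_N$ do not disturb the i.i.d.\ structure of the $\xi^x_N$-cycles (they do not, because each return to $\xi^x_N$ is a regeneration time for the full trace process). A clean way to handle this is to write $\int_0^t(V_N-\hat V_N)(\eta^{\mc E_N}_s)\mb 1\{\cdot\in\mc E^x_N\}\,ds$ as a sum over the successive hitting times $H^{(k)}$ of $\xi^x_N$ of the increments $\int_{H^{(k)}}^{H^{(k+1)}}(\cdots)\,ds$, take expectations using the strong Markov property at each $H^{(k)}$, invoke the cycle-average identity to kill each complete term, and bound the two boundary terms crudely; the estimate on the number of relevant cycles is not even needed quantitatively since the per-cycle expectation is exactly zero. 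The hypothesis $\sigma_N(x)\max_{\eta\in\mc E^x_N}|V_N(\eta)|\to 0$ is then used only on the two leftover excursions.
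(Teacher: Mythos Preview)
Your proposal is correct and follows essentially the same approach as the paper: the regenerative decomposition at $\xi^x_N$ you describe is exactly the content of Lemma~\ref{s08} and Corollary~\ref{s09}, which the paper proves separately (splitting $[0,t]$ into $\xi^x_N$-cycles, using the mean-zero property to kill the complete cycles, and keeping only the initial and terminal incomplete pieces) and then invokes to make the proof of Lemma~\ref{s03} a two-line application. The only cosmetic difference is that the paper first restricts $V_N$ to a single well rather than multiplying by the indicator, and packages the two boundary terms via the identity $\bb E_\xi\big[\int_0^t V\,ds\big] = \bb E_\xi\big[\int_0^{H_{\xi^x_N}} V\,ds\big] - \bb E_\xi\big[\bb E_{\eta_t}[\int_0^{H_{\xi^x_N}} V\,ds]\big]$.
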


\begin{proof}
Assume without loss of generality that $V_N$ vanishes outside some
subset $\mc E^x_{N}$, i.e. $V_N(\eta) = 0$ for $\eta \in \check{\mc
  E}^x_{N}$.  By Corollary \ref{s09}, the expectation appearing in the
statement of the lemma is bounded by
\begin{equation*}
2 \sigma_N(x)\Vert V_N - \hat V_N \Vert_\infty\;.
\end{equation*}
Since $\Vert \hat V_N \Vert_\infty \le \Vert V_N \Vert_\infty$ this
expression vanishes as $N\uparrow\infty$ by assumption.
\end{proof}
\medskip

\noindent\textbf{Condition (A).}
Fix $x\in S$ and a sequence $(\eta_N)_{N\ge 1}$ such that $\eta_N\in
\mc E^x_N$ for all $N\ge 1$. The convergence of the sequence $\{\bb
Q^N_{\,\eta_N}\}_{N\ge 1}$ stated in condition (A), follows from
tightness and uniqueness of limit points. We first examine the
tightness.

Denote by $(\bb P^{\mc E}_{\eta})_{\eta\in \mc E_N}$ the trace of the
Markov process $\eta^N_t$ on $\mc E_N$, and denote by $\bb
E_{\,\eta}^{\mc E}$ the expectation with respect to $\bb P^{\mc
  E}_{\eta}$.

\begin{lemma}
\label{s11}
The sequence $(\bb Q^N_{\,\eta_N})_{N\ge 1}$ is tight provided conditions
{\rm \bf (C1)}, {\rm \bf (C2)} hold.
\end{lemma}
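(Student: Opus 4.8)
The plan is to verify Aldous's tightness criterion for the $S$-valued processes $X^N_t:=\Psi_N(\eta^{\mc E_N}_t)$. Since $S$ is finite, compact containment is automatic, and it suffices to prove that for every $T>0$
\[
\lim_{\delta\downarrow 0}\ \limsup_{N\to\infty}\ \sup\
\bb P^{\mc E}_{\eta_N}\big[\,X^N_\sigma\neq X^N_\tau\,\big]\;=\;0\;,
\]
the supremum ranging over all pairs of stopping times $\tau\le\sigma\le(\tau+\delta)\wedge T$ of the natural filtration of the trace process $\eta^{\mc E_N}_t$. Denote by $\mc N^N_t$ the number of jumps of $\eta^{\mc E_N}_t$ in $[0,t]$ that move it from some $\mc E^x_N$ to some $\mc E^y_N$ with $y\neq x$; this is exactly the number of jumps of $X^N$ in $[0,t]$. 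For $\eta\in\mc E^x_N$ set $V_N(\eta):=\sum_{y\neq x}R^{\mc E}_N(\eta,y)$, the instantaneous rate at which such a crossing occurs. A standard compensation argument gives $\bb E^{\mc E}_\eta[\mc N^N_t]=\bb E^{\mc E}_\eta\big[\int_0^t V_N(\eta^{\mc E_N}_s)\,ds\big]$ for every $\eta\in\mc E_N$.

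The crux is the uniform bound
\[
\sup_{\eta\in\mc E_N}\bb E^{\mc E}_\eta\big[\mc N^N_\delta\big]\;\le\;\Lambda\,\delta\;+\;\epsilon_N(\delta)
\]
for each fixed $\delta>0$ and every large $N$, with $\Lambda<\infty$ independent of $N$ and $\epsilon_N(\delta)\to 0$ as $N\to\infty$. I would obtain it from the replacement Lemma \ref{s03} applied to the sequence $V_N$. Its hypothesis holds: since $\sigma_N(x)=\max_{\eta\in\mc E^x_N}\bb E^{\mc E}_\eta[H_{\xi^x_N}]\le\max_{\eta\in\mc E^x_N}\bb E^N_\eta[H_{\xi^x_N}]$ and $\max_{\eta\in\mc E^x_N}|V_N(\eta)|\le(\kappa-1)\max_{y\neq x}\max_{\eta\in\mc E^x_N}R^{\mc E}_N(\eta,y)$, condition {\bf (C2)} forces $\sigma_N(x)\max_{\eta\in\mc E^x_N}|V_N(\eta)|\to 0$ for every $x$. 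Next, for $\eta\in\mc E^x_N$ one computes $\hat V_N(\eta)=\sum_{y\neq x}r_N(x,y)$, which by {\bf (C1)} converges to $\sum_{y\neq x}r(x,y)<\infty$; hence $\|\hat V_N\|_\infty\le\Lambda:=1+\max_x\sup_N\sum_{y\neq x}r_N(x,y)$ for all large $N$. Splitting the integrand $V_N=\hat V_N+(V_N-\hat V_N)$, the expectation of $\int_0^\delta\hat V_N(\eta^{\mc E_N}_s)\,ds$ is at most $\Lambda\delta$, and Lemma \ref{s03} (applied with $t=\delta$ to the sequence of initial configurations realizing the supremum over $\mc E_N$) bounds the absolute value of the expectation of $\int_0^\delta(V_N-\hat V_N)(\eta^{\mc E_N}_s)\,ds$ by a quantity $\epsilon_N(\delta)\to0$; this yields the displayed estimate uniformly in the starting point.

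I would then close the Aldous condition by the strong Markov property of the trace process. For stopping times $\tau\le\sigma\le(\tau+\delta)\wedge T$, the event $\{X^N_\sigma\neq X^N_\tau\}$ forces at least one crossing jump in $(\tau,\tau+\delta]$, so
\[
\bb P^{\mc E}_{\eta_N}\big[X^N_\sigma\neq X^N_\tau\big]\;\le\;\bb E^{\mc E}_{\eta_N}\big[\mc N^N_{\tau+\delta}-\mc N^N_\tau\big]\;=\;\bb E^{\mc E}_{\eta_N}\Big[\bb E^{\mc E}_{\eta^{\mc E_N}_\tau}\big[\mc N^N_\delta\big]\Big]\;\le\;\Lambda\,\delta+\epsilon_N(\delta)\;,
\]
uniformly over such $\tau,\sigma$. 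Letting $N\to\infty$ and then $\delta\downarrow0$ establishes the criterion, hence the tightness of $(\bb Q^N_{\eta_N})_{N\ge1}$.

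The only genuinely delicate step is the uniform estimate of the second paragraph: a single configuration $\eta$ may well have $V_N(\eta)\to\infty$, so the trivial bound $\int_0^\delta V_N\le\delta\,\|V_N\|_\infty$ is useless, and one truly needs that the time average of the fluctuation $V_N-\hat V_N$ is negligible. This is precisely Lemma \ref{s03}, whose force comes from {\bf (C2)} — inside each well $\mc E^x_N$ the trace process reaches $\xi^x_N$ on a time scale far shorter than the gap between successive crossings — while {\bf (C1)} is what keeps the surviving averaged exit rate $\hat V_N$ bounded, so that the leftover term is genuinely of order $\delta$.
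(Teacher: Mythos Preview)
Your argument is correct and rests on the same two ingredients as the paper's proof --- the replacement Lemma \ref{s03} fed by {\bf (C2)}, and the boundedness of the averaged rates $r_N(x,y)$ from {\bf (C1)} --- but the packaging is different. The paper applies Aldous's criterion to the real-valued process $X^N_t$ itself via the semimartingale decomposition $X^N_t=X^N_0+\int_0^t L^{\mc E}_N\Psi_N(\eta^{\mc E_N}_s)\,ds+M^N_t$, and then controls the drift term and the martingale term separately (the latter through its quadratic variation, which again reduces to an expression in the $R^{\mc E}_N(\eta,x)$ and hence to Lemma \ref{s03}). You bypass this decomposition entirely by bounding $\bb P^{\mc E}_{\eta_N}[X^N_\sigma\neq X^N_\tau]$ directly by the expected number of crossing jumps $\bb E^{\mc E}_{\eta_N}[\mc N^N_{\tau+\delta}-\mc N^N_\tau]$, whose compensator is exactly $\int_0^\delta V_N$; the replacement lemma then does all the work in a single stroke. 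Your route is shorter and avoids the quadratic-variation computation, at the small cost of invoking the version of Aldous with two stopping times and of noting (as you do) that the bound coming from Corollary \ref{s09} is uniform in the starting configuration, so that the strong Markov step is legitimate. The paper's route, on the other hand, fits the standard template for identifying limit points via the martingale problem and therefore dovetails seamlessly with the uniqueness argument in Lemma \ref{s02}.
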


\begin{proof} 
For each $T>0$, let $\mf T_T$ denote the set of all stopping times
bounded by $T$. By Aldous criterium (see Theorem 16.10 in \cite{b}) we
just need to show that
\begin{equation}
\label{aldous}
\lim_{\delta\downarrow 0}\lim_{N\to\infty}\sup_{\theta\le\delta}
\sup_{\tau\in\mf T_T}\bb P^{\mc E}_{\eta_N} \big[ \;
| X^N_{\tau + \theta} - X^N_{\tau}| > \epsilon \;\big] \;=\; 0
\end{equation}
for every $\epsilon>0$ and $T>0$.

Recall that we denote by $L^{\mc E}_N$ the generator of the trace
process $\eta^{\mc E_N}_t$. Denote by $M^N_t$ the martingale defined by
\begin{equation*}
  M^N_t \;=\; X^N_t \;-\; X^N_0 \;-\; \int_0^t L^{\mc E}_N X^N_s ds\; .
\end{equation*}
To prove tightness of the process $X^N_t$, it is therefore enough to
show that \eqref{aldous} holds with the difference $X^N_{\tau+\theta}
- X^N_{\tau}$ replaced by $M^N_{\tau+\theta} - M^N_{\tau}$ and by
$\int_\tau^{\tau+\theta} L^{\mc E}_N X^N_s ds$.

Consider the integral term. By Chebychev inequality and by the strong
Markov property, we need to prove that
\begin{equation*}
\lim_{\delta\downarrow 0}\, \lim_{N\to\infty}\, \sup_{\theta\le\delta}
\, \max_{\xi \in\mc E_N} \bb E^{\mc E}_{\xi} \Big[ \;
\Big| \int_0^\theta L^{\mc E}_N X^N_s ds \Big | \;\Big] 
\;=\; 0\;.
\end{equation*}
An elementary computation shows that
\begin{equation*}
L^{\mc E}_N \Psi_N (\eta) \;=\; \sum_{x=1}^\kappa 
\{x - \Psi_N(\eta)\} \, R^{\mc E}_N(\eta, x) 
\end{equation*}
for each $\eta$ in $\mc E_N$. The proof is thus reduced to the claim
\begin{equation*}
\lim_{\delta\downarrow 0}\, \lim_{N\to\infty}\,
\max_{\xi \in\mc E^y_N} \, \bb E^{\mc E}_{\xi} \Big[  \int_0^\delta
R^{\mc E}_N(\eta_s, x)  ds \Big] \;=\; 0
\end{equation*}
for all $x$, $y$ in $\{1, \dots, \kappa\}$. 

Fix $x$, $y$. In the previous expectation add and subtract the
$\nu_N$-conditional expectation of $R^{\mc E}_N(\eta, x)$ with respect
to $\mc E^y_N$, denoted by $r_N(y,x)$.  By condition {\bf (C1)}, the
expectation involving $r_N$ is bounded by $C_0 \delta$, which vanishes
as $\delta\downarrow 0$. On the other hand, by condition {\bf (C2)},
$\max_{\eta\in\mc E^y_N} R_N^{\mc E}(\eta,x) \max_{\eta\in\mc E^y_N}
\bb E^N_{\,\eta}[H_{\xi^y_N}]$ vanishes as $N\uparrow\infty$. Since
$\bb E^{\mc E}_{\,\eta}[H_{\xi^y_N}] \le \bb
E^N_{\,\eta}[H_{\xi^y_N}]$, we also have that $\max_{\eta\in\mc E^y_N}
R_N^{\mc E}(\eta,x) \max_{\eta\in\mc E^y_N} \bb E^{\mc
  E}_{\,\eta}[H_{\xi^y_N}]$ vanishes as $N\uparrow\infty$.  In
particular, by Lemma \ref{s03}, the expectation involving the
difference $R^{\mc E}_N(\eta, x) - r_N(y,x)$ vanishes as
$N\uparrow\infty$ for every $\delta>0$.

We now turn to the martingale part $M^N_t$, whose quadratic variation
is given by the time integral of $L^{\mc E}_N \Psi_N(X^N_t)^2 - 2
\Psi_N(X^N_t) L^{\mc E}_N \Psi_N(X^N_t)$. An elementary computation
shows that this expression is equal to
\begin{equation*}
\sum_{x=1}^\kappa \{x - \Psi_N(\eta)\}^2  \, R^{\mc E}_N(\eta, x)\;.
\end{equation*}

Denote by $\< M^N\>_t$ the quadratic variation of the martingale
$M^N$. By the explicit formula for the quadratic variation, by
Chebychev inequality and by the strong Markov property,
\begin{eqnarray*}
\bb P^{\mc E}_{\eta_N} \big[ \, \big| M^N_{\tau+\theta} 
-  M^N_{\tau} \big| \; >\; \epsilon \big] &\le& 
\frac 1{\epsilon^2} \, \bb E^{\mc E}_{\eta_N}
\big[ \<M^N\>_{\tau+\theta} - \<M^N\>_\tau \big] \\
&\le& \frac {C_0}{\epsilon^2} \, \max_{\xi\in \mc E_N}
\bb E^{\mc E}_{\xi} \big[  \int_0^\delta
R^{\mc E}_N(\eta_s, x)  ds \big]
\end{eqnarray*}
It remains to repeat the arguments presented for the intgral term of
the decomposition.
\end{proof}
\medskip

Now we turn to prove the uniqueness of limit points. Assume without
loss of generality that the sequence of probability measures $\bb
Q^N_{\,\eta_N}$ converges to a measure $\bb Q\,$. Recall definition of
$(r_N(x,y) : x,y\in S)$ and their limits $(r(x,y) : x,y\in S)$ in
condition {\bf (C1)}. Denote by $\mf L_N$ and $\mf L$ the Markov
generators on the state space $S=\{1,\dots,\kappa\}$ given by
$$
(\mf L_NF)(x) \;=\; \sum_{y\in S\setminus\{x\}} \{F(y) - F(x)\} r_N(x,y)
$$
and
$$
(\mf LF)(x) \;=\; \sum_{y\in S\setminus\{x\}} \{F(y) - F(x)\}\, r(x,y)\;.
$$
For $t\ge 0$, let $X_t$ denote the projection $D(\bb R_+,S)\mapsto
S$. We shall prove in Lemma \ref{s02} below that $\bb Q\,$ solves the
martingale problem associated to the generator $\mf L$. It is well
known that this property together with the distribution of $X_0$
characterize the measure $\bb Q\,$.

\begin{lemma}
\label{s02}
Assume conditions {\bf (C1)} and {\bf (C2)}. Then, under $\bb Q\,$,
$X_0 = x$ and
\begin{equation}
\label{f04}
M_t \;=\; F(X_t) \;-\; F(X_0) \;-\; \int_0^t \mf L F (X_s) \, ds
\end{equation}
is a martingale for any function $F: S \mapsto \bb R$.
\end{lemma}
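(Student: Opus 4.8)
The plan is to show that $\bb Q$ solves the martingale problem associated to $\mf L$, obtaining \eqref{f04} by combining Dynkin's formula for the trace process $\eta^{\mc E_N}_t$, the replacement Lemma \ref{s03}, condition {\bf (C1)}, and the weak convergence $\bb Q^N_{\eta_N}\to\bb Q$ used in Lemma \ref{s11}. The identity $X_0=x$ is immediate: since $\eta_N\in\mc E^x_N$ we have $X^N_0=\Psi_N(\eta_N)=x$ under $\bb P^{\mc E}_{\eta_N}$, so $\bb Q^N_{\eta_N}[X_0=x]=1$ for every $N$, and as $\{\omega:\omega(0)=x\}$ is both open and closed in $D(\bb R_+,S)$ the portmanteau theorem gives $\bb Q[X_0=x]=1$.

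For the martingale property, fix $F:S\to\bb R$. Applying Dynkin's formula to the Markov trace process $\eta^{\mc E_N}_t$ and the function $F\circ\Psi_N$ on $\mc E_N$, the process $N^N_t:=F(X^N_t)-F(X^N_0)-\int_0^t(L^{\mc E}_N(F\circ\Psi_N))(\eta^{\mc E_N}_s)\,ds$ is a martingale for the natural filtration of the trace process under $\bb P^{\mc E}_{\eta_N}$, where $X^N_t=\Psi_N(\eta^{\mc E_N}_t)$. A direct computation, exactly as in the proof of Lemma \ref{s11}, gives $(L^{\mc E}_N(F\circ\Psi_N))(\eta)=\sum_{y\in S\setminus\{x\}}\{F(y)-F(x)\}\,R^{\mc E}_N(\eta,y)$ for $\eta\in\mc E^x_N$, and by the very definition of $r_N(x,y)$ the $\nu_N$-conditional expectation of this function given the $\sigma$-algebra generated by $(\mc E^y_N)_{y\in S}$ is the function $\eta\mapsto(\mf L_N F)(\Psi_N(\eta))$.

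The next and key step is the replacement. Writing $(L^{\mc E}_N(F\circ\Psi_N))(\eta)=\sum_{x\in S}\sum_{y\ne x}\{F(y)-F(x)\}R^{\mc E}_N(\eta,y)\,\mathbf 1\{\eta\in\mc E^x_N\}$ as a sum of finitely many functions, each supported on a single $\mc E^x_N$, I apply Lemma \ref{s03} to each summand; its hypothesis $\sigma_N(x)\max_{\mc E^x_N}|V_N|\to 0$ holds because $\sigma_N(x)\le\max_{\eta\in\mc E^x_N}\bb E^N_\eta[H_{\xi^x_N}]$ and $|\{F(y)-F(x)\}R^{\mc E}_N(\eta,y)|\le 2\Vert F\Vert_\infty\max_{\eta\in\mc E^x_N}R^{\mc E}_N(\eta,y)$, so condition {\bf (C2)} applies. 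Since the bound $2\sigma_N(x)\Vert V_N-\hat V_N\Vert_\infty$ furnished by the proof of Lemma \ref{s03} does not depend on the starting configuration, with $E^N_r:=\int_0^r\{(L^{\mc E}_N(F\circ\Psi_N))(\eta^{\mc E_N}_u)-(\mf L_N F)(X^N_u)\}\,du$ one gets $\sup_{\zeta\in\mc E_N}\big|\bb E^{\mc E}_\zeta[E^N_r]\big|\to 0$ for each fixed $r$. Setting $\hat N^N_t:=F(X^N_t)-F(X^N_0)-\int_0^t(\mf L_N F)(X^N_s)\,ds=N^N_t+E^N_t$ and conditioning the increment $E^N_t-E^N_s$ on the trace up to time $s$ via the Markov property, so that its conditional expectation is $\bb E^{\mc E}_{\eta^{\mc E_N}_s}[E^N_{t-s}]$ which is uniformly small, I conclude $\bb E^{\mc E}_{\eta_N}\big[(\hat N^N_t-\hat N^N_s)\,\Phi(X^N_{s_1},\dots,X^N_{s_n})\big]\to 0$ for all $0\le s_1<\dots<s_n\le s<t$ and all bounded $\Phi:S^n\to\bb R$, because the genuine martingale $N^N$ contributes zero against the factor $\Phi(X^N_{s_1},\dots,X^N_{s_n})$, which depends only on the trace up to time $s$.

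Finally I would pass to the weak limit. By {\bf (C1)} and finiteness of $S$ one has $\Vert\mf L_N F-\mf L F\Vert_\infty\to 0$, so replacing $\mf L_N$ by $\mf L$ in $\hat N^N$ changes the previous expectation by $O\big((t-s)\Vert\mf L_N F-\mf L F\Vert_\infty\big)\to 0$. The functional $\omega\mapsto\big\{F(\omega(t))-F(\omega(s))-\int_s^t(\mf L F)(\omega(u))\,du\big\}\,\Phi(\omega(s_1),\dots,\omega(s_n))$ on $D(\bb R_+,S)$ is bounded and, if $s_1,\dots,s_n,s,t$ are taken in the dense co-countable set of times $r$ for which $\bb Q[X_{r-}=X_r]=1$, it is continuous $\bb Q$-almost surely; hence weak convergence of $\bb Q^N_{\eta_N}$ to $\bb Q$ gives $\bb E_{\bb Q}\big[(M_t-M_s)\,\Phi(X_{s_1},\dots,X_{s_n})\big]=0$ for such times, and a routine approximation using right-continuity of the paths and bounded convergence extends this to all $0\le s_1<\dots<s_n\le s<t$, which is precisely \eqref{f04} relative to the natural filtration of $X$. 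The main obstacle is the replacement step: one must pass from the exact compensator of $N^N$, built from the microscopic rates $R^{\mc E}_N(\cdot,y)$, to the averaged generator $\mf L_N$, and this rests entirely on the ergodic estimate {\bf (C2)} through Lemma \ref{s03}, crucially on the uniformity of its bound in the initial configuration so that the error is not resurrected by conditioning at time $s$. The remaining arguments (portmanteau at $t=0$, the weak-limit passage, and the continuity-point bookkeeping) are routine.
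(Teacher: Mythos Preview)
Your proof is correct and follows essentially the same approach as the paper: Dynkin's formula for the trace process, the replacement Lemma \ref{s03} powered by {\bf (C2)}, passage from $\mf L_N$ to $\mf L$ via {\bf (C1)}, and weak convergence of $\bb Q^N_{\eta_N}$ to $\bb Q$. The only minor difference is in the bookkeeping for the weak-limit step: the paper handles the discontinuity of evaluation maps by taking short time averages $\epsilon^{-1}\int_0^\epsilon dr$ and then letting $\epsilon\downarrow 0$, whereas you restrict first to continuity points of $\bb Q$ and extend by right continuity --- both are standard and equivalent here.
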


\begin{proof}
Fix $0\le s< t$, a function $F: S \mapsto \bb R$ and a bounded
function $U:D(\bb R_+,S)\mapsto \bb R$ depending only on $\{X_r : 0\le
r\le s\}$, continuous for the Skorohod topology.  Denote by $\bb
E_{\bb Q}$ and $\bb E_{\bb Q^N_{\eta_N}}$ the expectation with respect
to $\bb Q$ and $\bb Q^N_{\,\eta_N}$, respectively. We shall prove that
\begin{equation}
\label{f11}
\bb E_{\bb Q}[M_t U] \;=\; \bb E_{\bb Q}[M_s U]\;.
\end{equation}

Recall that $L^{\mc E}_N$ denotes the generator of the trace process
$\eta^{\mc E_N}_t$. For $N\ge 1$, consider the $\bb P^{\mc
  E}_{\eta_N}$-martingale $M^N_t$ defined by
\begin{equation*}
M^N_t \;=\; F(X^N_t) \;-\; F(X^N_0) \;-\; \int_0^t L^{\mc E}_N
(F\circ\Psi_N)(\eta^{\mc E_N}_s) \, ds\;.
\end{equation*}
As $M^N_t$ is a martingale, $\bb E^{\mc E}_{\eta_N}[M^N_t U] 
= \bb E^{\mc E}_{\eta_N}
[M^N_s U]$ so that
\begin{equation*}
\bb E^{\mc E}_{\eta_N} \Big [ U \Big\{ F(X^N_t) \;-\; F(X^N_s) \;-\; 
\int_s^t L^{\mc E}_N
(F\circ\Psi_N)(\eta^{\mc E_N}_r) \, dr \Big\} \Big ] \;=\; 0\;.
\end{equation*}
Since $\bb Q^N_{\,\eta_N}$ converges to $\bb Q\,$, time averages of
$E_{\bb Q^N_{\eta_N}}[F(X_t) U]$, $E_{\bb Q^N_{\eta_N}}[F(X_s) U]$
converge to time averages of $E_{\bb Q}[F(X_t) U]$, $E_{\bb Q}[F(X_s)
U]$, respectively.

We first claim that
\begin{equation}
\label{f08}
\lim_{N\to\infty}
\bb E^{\mc E}_{\eta_N} \Big [U \int_s^t \Big\{ (L^{\mc E}_N
(F\circ\Psi_N))(\eta^{\mc E_N}_r)  -
(\mf L_N F)(X^N_r)) \Big\} \, dr \Big]\;=\;0\;.
\end{equation}
Since $U$ is bounded, by the Markov property, it is enough to show
that
\begin{equation*}
\lim_{N\to\infty} \max_{\eta \in \mc E_N} \Big\vert \,\bb E^{\mc E}_{\eta} 
\Big [\int_0^{t-s} \Big\{ (L^{\mc E}_N
(F\circ\Psi_N))(\eta^{\mc E_N}_r) - (\mf L_N F)(X^N_r)) \Big\} \, dr \Big] \,
\Big\vert \;=\; 0\;.
\end{equation*}
By definition of the operator $\mf L_N$ and since 
\begin{equation*}
L^{\mc E}_N (F\circ\Psi_N)(\eta) \;=\; 
\sum_{y\in S\setminus\{x\}} \big\{F(y) - F\circ\Psi_N(\eta) \big\} 
\, R _N^{\mc E}(\eta, y)\;,\quad \textrm{for $\eta\in \mc E^x_N$,}
\end{equation*}
the difference $L^{\mc E}_N (F\circ\Psi_N)(\eta) - (\mf L_N
F)(\Psi_N(\eta))$ is equal to
\begin{equation*}
\sum_{y\in S} \big\{F(y) - F(\Psi_N(\eta))\big\} \, \{ R_N^{\mc E}(\eta,y) - 
r_N(\Psi_N(\eta),y)\}\;. 
\end{equation*}
In particular, \eqref{f08} follows from Lemma \ref{s03} and condition
{\bf (C2)} because
$$
\bb E^{\mc E}_{\eta}[H_{\xi^x_N}] \;\le\; \bb E^N_{\eta}[H_{\xi^x_N}]
$$
for every $\eta\in \mc E^x_N$. On the other hand, from condition {\bf
  (C1)}, it follows that
\begin{equation*}
\lim_{N\to\infty}
\bb E^{\mc E}_{\eta_N} \Big [U \int_s^t \Big\{ (\mf L_N F)(X^N_r) -
(\mf L F)(X^N_r) \Big\} \, dr \Big]\;=\;0\;.
\end{equation*}
$\bb Q^N_{\,\eta_N}$ converges to $\bb Q\,$ by assumption. Hence,
putting together the previous estimates we obtain that
\begin{equation*}
\lim_{N\to\infty}
\bb E^{\mc E}_{\eta_N} \Big [U \int_s^t (L^{\mc E}_N
(F\circ\Psi_N))(\eta^{\mc E_N}_r) \, dr \Big]
\;=\; \bb E_{\bb Q\,} \Big [U \int_s^t (\mf L F)(X_r) \, dr \Big]
\end{equation*}
because $U$ is bounded and continuous for the Skorohod topology.

Up to this point we proved that 
\begin{equation*}
\frac 1\epsilon \int_0^\epsilon dr \, 
E_{\bb Q} \Big [ U \Big\{ F(X_{t+r}) \;-\; F(X_{s+r}) \;-\; 
\int_{s+r}^{t+r} (\mf L F)(X_s) \Big\} \Big ] \;=\; 0
\end{equation*}
for every $\epsilon >0$. It remains to let $\epsilon \downarrow 0$ and
use the right continuity of the process to deduce \eqref{f11}, which
concludes the proof of the lemma.
\end{proof}

\noindent\textbf{Condition (B).}
We need to introduce some additional notation. For a path $\omega\in
D(\bb R_+,S)$ performing infinitely many jumps, we denote by
$\tau_n(\omega)$, $n\ge 0$ the jumping times of $\omega$:
$\tau_0(\omega)=0$ and
\begin{equation}\label{taun}
\tau_n(\omega) \;:=\; \inf\{t>\tau_{n-1}(\omega) : 
\omega(t)\neq \omega(\tau_{n-1}(\omega)) \}\;.
\end{equation}
Let us denote 
$$
T_n(\omega) \; := \; \tau_n(\omega)-\tau_{n-1}(\omega)\;,\quad n\ge 1\;.
$$
Further, denote by $N_t$ the number of jumps up to time $t$:
$$
N_t(\omega) \; := \; \sup\{j\ge 0 : \tau_j(\omega)\le t\}\;.
$$
The process $X_t^N:=\Psi_N(\eta^{\mc E_N}_t)$ is defined on the path
space $D(\bb R_+,\mc E_N)$. Nevertheless, $X^N_t$ can be think of as a
process defined on $D(\bb R_+,E_N)$ in the obvious way. Hence, we
obtain a coupling of $X^N_t$ and $\hat X^N_t$, where $\hat X^N_t$ has
been defined defined just before Proposition \ref{corol}. In what
follows we consider this coupling.

\begin{lemma}
Condition {\rm \bf (C3)} implies that for every $n\ge 1$
\begin{equation}\label{Tn}
\lim_{N\to\infty}\max_{\eta\in\mc E_N}\bb E^N_{\,\eta}
[\, T_n(\hat X^N) - T_n(X^N)\,] \;=\; 0\;.
\end{equation}
\end{lemma}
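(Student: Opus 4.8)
The plan is to identify the difference $T_n(\hat X^N)-T_n(X^N)$ with the amount of time $\eta^N$ spends in $\Delta_N$ during the $n$-th sojourn of $\hat X^N$ in a single meta-state, and then to control that quantity by the entry time appearing in condition {\bf (C3)}.

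First I would unpack the coupling of $X^N$ and $\hat X^N$ (both are functionals of the single path $\eta^N$) and observe that they share the same jump chain $(Y_0,Y_1,\dots)$ in $S$: a jump of $\hat X^N$ occurs exactly when $\eta^N$ enters a set $\mc E^y_N$ carrying a label $y$ different from that of the previous visit of $\eta^N$ to $\mc E_N$, and the trace process $\eta^{\mc E_N}$ records these changes of block in the same order, so $X^N=\Psi_N(\eta^{\mc E_N})$ jumps at the matching trace times. In particular, at each $\tau_{n-1}(\hat X^N)$ the process $\eta^N$ lies in $\mc E_N$, and $\tau_{n-1}(\hat X^N)$ is a stopping time for $\eta^N$, being obtained inductively from hitting times of the sets $\check{\mc E}^y_N$. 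On the interval $[\tau_{n-1}(\hat X^N),\tau_n(\hat X^N))$ the process $\eta^N$ stays in $\mc E^{Y_{n-1}}_N\cup\Delta_N$, and the trace process $\eta^{\mc E_N}$ sees precisely the part of this interval spent in $\mc E_N$, which here coincides with the part spent in $\mc E^{Y_{n-1}}_N$. Therefore
$$
T_n(\hat X^N)-T_n(X^N)\;=\;\mc T^{\Delta_N}_{\tau_n(\hat X^N)}-\mc T^{\Delta_N}_{\tau_{n-1}(\hat X^N)}\;\ge\;0\;,
$$
so it suffices to bound this expression from above.

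Next I would reduce to $n=1$. Applying the strong Markov property at the stopping time $\tau_{n-1}(\hat X^N)$, using that under the shift $\mc T^{\Delta_N}_{\tau_n(\hat X^N)}-\mc T^{\Delta_N}_{\tau_{n-1}(\hat X^N)}$ becomes $\mc T^{\Delta_N}_{\tau_1(\hat X^N)}$ and that $\eta^N_{\tau_{n-1}(\hat X^N)}\in\mc E_N$, one gets
$$
\bb E^N_\eta\big[T_n(\hat X^N)-T_n(X^N)\big]\;=\;\bb E^N_\eta\Big[\,\bb E^N_{\eta^N_{\tau_{n-1}(\hat X^N)}}\big[\mc T^{\Delta_N}_{\tau_1(\hat X^N)}\big]\,\Big]\;,
$$
so it is enough to estimate $\bb E^N_\xi[\mc T^{\Delta_N}_{\tau_1(\hat X^N)}]$ uniformly over $\xi\in\mc E_N$. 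Fix $x\in S$ and $\xi\in\mc E^x_N$. Started at $\xi$, the first jump of $\hat X^N$ happens when $\eta^N$ first enters $\check{\mc E}^x_N$, hence $\tau_1(\hat X^N)=H_{\check{\mc E}^x_N}$ and $\eta^N$ remains in $\mc E^x_N\cup\Delta_N$ up to that time. Let $H$ be the first exit time of $\eta^N$ from $\mc E^x_N$. If $\eta^N_H\in\check{\mc E}^x_N$ then $\mc T^{\Delta_N}_{\tau_1(\hat X^N)}=0$; otherwise $\eta^N_H\in\partial_x\Delta_N$, and the time spent in $\Delta_N$ on $[H,H_{\check{\mc E}^x_N})$ equals the entry time $H_{\check{\mc E}^x_N}$ for the trace process on $E_N\setminus\mc E^x_N$ started at $\eta^N_H$, because that trace process, once started in $\partial_x\Delta_N$, stays in $\Delta_N$ until it reaches $\check{\mc E}^x_N$. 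Hence, by the strong Markov property at $H$,
$$
\bb E^N_\xi\big[\mc T^{\Delta_N}_{\tau_1(\hat X^N)}\big]\;=\;\bb E^N_\xi\Big[\,\mathbf 1\{\eta^N_H\in\partial_x\Delta_N\}\;\bb E^{N,x}_{\eta^N_H}\big[H_{\check{\mc E}^x_N}\big]\,\Big]\;\le\;\max_{\zeta\in\partial_x\Delta_N}\bb E^{N,x}_\zeta\big[H_{\check{\mc E}^x_N}\big]\;,
$$
with the right-hand side read as $0$ when $\partial_x\Delta_N$ is empty. Combining the displays yields, uniformly in $n\ge1$,
$$
\max_{\eta\in\mc E_N}\bb E^N_\eta\big[T_n(\hat X^N)-T_n(X^N)\big]\;\le\;\max_{x\in S}\;\max_{\zeta\in\partial_x\Delta_N}\bb E^{N,x}_\zeta\big[H_{\check{\mc E}^x_N}\big]\;,
$$
and the right-hand side tends to $0$ by {\bf (C3)}.

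The main obstacle is the first step: correctly matching the $n$-th sojourns of $\hat X^N$ and of $X^N$ and recognizing their difference as the $\Delta_N$-time. Here one must keep careful track of the fact that within a single sojourn $\eta^N$ may perform several excursions into $\Delta_N$, each returning to the same block $\mc E^{Y_{n-1}}_N$ and hence not triggering a jump of either process, and of the fact that the trace process on $E_N\setminus\mc E^x_N$, once started in $\partial_x\Delta_N$, necessarily stays in $\Delta_N$ until it reaches $\check{\mc E}^x_N$; this last point is precisely what makes the excursion bookkeeping collapse into the single entry time governed by {\bf (C3)}. Checking that $\tau_{n-1}(\hat X^N)$ is a stopping time and that $\eta^N$ sits in $\mc E_N$ at that time is routine once the jump mechanism of $\hat X^N$ has been spelled out.
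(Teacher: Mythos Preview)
Your argument is correct and matches the paper's own proof: reduce to $n=1$ by the strong Markov property, identify $T_1(\hat X^N)-T_1(X^N)$ with the $\Delta_N$-time up to $H_{\check{\mc E}^x_N}$, recognize this as the entry time of $\check{\mc E}^x_N$ for the trace on $E_N\setminus\mc E^x_N$ started from the first landing point in $\partial_x\Delta_N$, and bound by the maximum in {\bf (C3)}. The only cosmetic difference is that the paper applies the strong Markov property at $H_{\Delta_N}$ rather than at your first exit time $H$ from $\mc E^x_N$; on the relevant event these coincide, and your case split $\eta^N_H\in\check{\mc E}^x_N$ versus $\eta^N_H\in\partial_x\Delta_N$ is exactly the paper's dichotomy $H_{\check{\mc E}^x_N}<H_{\Delta_N}$ versus its complement.
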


\begin{proof}
By the strong Markov property, it suffices to prove the limit
(\ref{Tn}) for $n=1$. Let $\omega$ be a path in $D(\bb R_+,E_N)$. Fix
$x\in S$ and $\eta\in \mc E^x_N$. Under $\bb P_{\eta}^N$, if
$H_{\check{\mc E}^x_N}(\omega)< H_{\Delta_N}(\omega)$ then $T_1(\hat
X^N(\omega)) = T_1( X^N(\omega))$ and otherwise
\begin{equation}\label{t1}
T_1(\hat X^N(\omega)) - T_1(X^N(\omega)) \;=\; 
H_{\check{\mc E}^x_N}\circ\mc R^{E_N\setminus\mc E^x_N}\circ\Theta(H_{\Delta_N})(\omega)
\end{equation}
where  $\Theta(H_{\Delta_N})(\omega)\in D(\bb R_+,E_N)$ is defined by 
$$
\Theta(H_{\Delta_N})(\omega)(t) \;=\; \omega(H+t)
$$
and $H:=H_{\Delta_N}(\omega)$. By using (\ref{t1}) and the strong
Markov property,
\begin{eqnarray*}
\bb E^N_{\,\eta}[\, T_1(\hat X^N) - T_1(X^N)\,] 
&\le& \max_{\xi\in\partial_x\Delta_N}\bb E_{\xi}^N[H_{\check{\mc
    E}^x_N} \circ\mc R^{E_N\setminus\mc E^x_N}] \\
&=& \max_{\xi\in\partial_x\Delta_N}\bb E_{\xi}^{N,x}[H_{\check{\mc E}^x_N}]
\end{eqnarray*}
Hence, (\ref{Tn}) for $n=1$ follows from condition {\bf (C3)}.
\end{proof}

Now, we proceed to check condition (B). To keep notation simple, set
$\hat N_t:=N_t(\hat X^N)$, $\hat T_n:=T_n(\hat X^N)$ and
$T_n:=T_n(X^N)$. Fix an arbitrary $\eta\in\mc E_N$. Under $\bb
P^N_{\eta}$, it is easy to see that
\begin{eqnarray*}
\mc T^{\Delta_N}_t &\le& t \land \sum_{n=1}^{\hat N_t +1}
(\hat T_n - T_n) \\
&\le& \mathbf 1\{\hat N_t \ge K\} \; t \;+\; 
\mathbf 1\{\hat N_t < K\} \sum_{n=1}^{K}(\hat T_n - T_n)\;,
\end{eqnarray*}
for any positive integer $K$. Therefore,
\begin{equation}\label{e01}
\bb E^N_{\eta}[\mc T^{\Delta_N}_t] \; \le \; t \;
\bb P^N_{\eta}[\,N_t(\hat X^N) \ge K \,] \;+\; 
\bb E^N_{\eta}[\,\tau_K(\hat X^N) - \tau_K(X^N)\,]\;.
\end{equation}
It follows from the previous lemma that
\begin{equation*}
\lim_{N\to \infty}\max_{\eta\in \mc E_N}\bb E^N_{\eta}[\,\tau_K(\hat
X^N) - \tau_K(X^N)\,] \;=\; 0\;.
\end{equation*}
Hence, in view of estimate (\ref{e01}), it remains to prove that
\begin{equation}\label{e02}
\lim_{K\to\infty} \limsup_{N\to\infty} \max_{\eta\in \mc E_N} 
\bb P^N_{\eta}[\,N_t(\hat X^N) \ge K \,] \;=\;0\;.
\end{equation}
For it, notice that $N_t(\hat X^N)\le N_t(X^N)$, $\bb P^N_{\eta}$ -
a.s. for all $\eta\in\mc E_N$ and, in consequence,
$$
\bb P^N_{\eta}[\,N_t(\hat X^N) \ge K \,] \;\le\; 
\bb P^N_{\eta}[\,N_t(X^N) \ge K \,]\;.
$$
Fix the sequence $\eta_N\in \mc E_N$, $N\ge 1$ defined by
\begin{eqnarray}
\max_{\eta\in \mc E_N} \bb P^N_{\eta}[\,N_t(X^N)\ge K \,] 
&:=&  \bb P^N_{\eta_N}[\,N_t(X^N)\ge K \,] \nonumber\\
&=& \bb Q^N_{\,\eta_N}[N_t\ge K].\label{e03}
\end{eqnarray}
Now, it is not difficult to see that $[N_t\ge K]$ is a closed set for
the Skorohod topology on $D(\bb R_+,S)$. Then,
\begin{equation}\label{e04}
\limsup_{N\to \infty}\bb Q^N_{\,\eta_N}[N_t\ge K] \;\le\; 
\max_{x\in S} \bb Q_{\,x}[N_t\ge K]\;.
\end{equation}
Finally, since $S$ is a finite set, limit (\ref{e02}) follows from
(\ref{e03}) and (\ref{e04}).  This concludes the proof of Theorem
\ref{teo1}.

\subsection{Proof of Theorem \ref{teo2}}

Recall that $\nu_N$ is supposed to be a reversible probability measure
for all $N\ge 1$. We recall the formula for the expected value of
entry times proved in \cite{b2} : For any subset $F$ of $E_N$ and any
$\eta\in E_N \setminus F$,
\begin{equation}\label{anton}
\bb E^N_{\eta}[H_F] \; = \; \frac{\nu_N(f_{\eta,F})}{\Cap_N(\eta,F)}\;\cdot
\end{equation}
This is the key result in order to introduce the notion of capacity in
our study of meta-stability.

In what follows, we check each condition in Theorem \ref{teo1}.
Obviously, condition {\bf (C1)} is already in force. To check
condition {\bf (C2)}, fix two elements $x,y$ of $S$, $x\not = y$. By
using the strong Markov property, we have for any two sites
$\eta,\xi\in \overline{\mc E^x_N}$,
$$
\bb E^N_{\eta}[H_{\xi}] \;\le\; \bb E^N_{\eta}[H_{\xi^x_N}] 
+ \bb E^N_{\xi^x_N}[H_{\xi}]\;.
$$
Then, by using formula (\ref{anton}),
\begin{equation}\label{equa0}
\max_{\eta,\xi\in \overline{\mc E^x_N}}\bb E^N_{\eta}[H_{\xi}] 
\;\le\; \frac{2}{\min\{\Cap_N(\eta,\xi^x_N) : 
\eta\in \overline{\mc E^x_N}\}} \;\cdot
\end{equation}
On the other hand, if $\partial\mc E_N^{x,y}$ is empty then
$\max_{\eta\in\mc E^x_N}R_N^{\mc E}(\eta,y)=0$. Otherwise, an
elementary estimate shows that
\begin{equation}\label{equa1}
\max_{\eta\in\mc E^x_N}R_N^{\mc E}(\eta,y) \;\le\; 
\frac{\nu_N(\mc E^x_N) r_N(x,y) }{\min\{\nu_N(\eta) : 
\eta\in\partial\mc E^{x,y}_N\}}\;\cdot
\end{equation}
Therefore, condition {\bf (C2)} follows from (\ref{equa0}),
(\ref{equa1}) and hypotheses {\bf (H3)} because $r_N(x,y)$ is bounded
by virtue of hypotheses {\bf (C1)}.

Finally we check condition {\bf (C3)}. Fix $x\in S$. By using the
strong Markov property for the trace of $\eta^N_t$ on $E_N\setminus
\mc E^x_N$,
$$
\max_{\eta\in\partial_x\Delta_{N}}
\bb E^{N,x}_{\,\eta}[H_{\check{\mc E}^x_N}] 
\;\le\; \max_{\eta\in\partial_x\Delta_{N}}
\bb E^{N,x}_{\,\eta}[H_{\zeta^x_N}] \;+\; 
\bb E^{N,x}_{\,\zeta^x_N}[H_{\check{\mc E}^x_N}]\;.
$$
By hypotheses {\bf (H3)} and estimate (\ref{equa0}), the first term
in the right hand side of this inequality vanishes as
$N\uparrow\infty$, because $\bb E^{N,x}_{\,\eta}[H_{\zeta^x_N}]\le\bb
E^{N}_{\,\eta}[H_{\zeta^x_N}]$ for all $\eta\in\partial_x\Delta_N$.
Moreover, by using the formula in (e) of Lemma \ref{s07},
\begin{eqnarray*}
\bb E^{N,x}_{\,\zeta^x_N}[H_{\check{\mc E}^x_N}] 
&=& \frac{\nu_N\big(\;\mb 1\{E_N\setminus \mc E^x_N\}\;
f_{\zeta^x_N,\check{\mc E}^x_N}\;\big)}
{\Cap_{N}(\zeta^x_N,\check{\mc E}^x_N)} \\
&\le& \frac{\nu_N(\Delta_N)}{\Cap_{N}(\zeta^x_N,\check{\mc E}^x_N)}\;\cdot
\end{eqnarray*}
In view of {\bf (H2)}, this concludes the proof.

\subsection{Proof of Proposition \ref{corol}}
At first, we fix a metric in $D(\bb R_+,S)$ which induces the Skorohod
topology. For each integer $m\ge 1$, let $\Lambda_m$ denote the class
of strictly increasing, continuous mappings of $[0,m]$ onto itself. If
$\lambda \in \Lambda$, then $\lambda_0=0$ and $\lambda_m=m$. For
$\omega$ and $\hat\omega$ in $D(\bb R_+,S)$, define
$d_m(\omega,\hat\omega)$ to be the infimum of those positive
$\epsilon$ for which there exists in $\Lambda_m$ a $\lambda$
satisfying
$$
\sup_{t\in [0,m]} |\lambda_t-t| < \epsilon
$$
and
$$
\sup_{t\in [0,m]}|\omega(\lambda_t) - \hat\omega(t)| < \epsilon\;.
$$
Now, for each integer $m\ge 1$, define
$$
g_m(t)\;=\;\left\{ 
\begin{array}{ll} 
1 & \textrm{if $t\le m-1$}\;,\\
m-t & \textrm{if $m-1 \le t\le m$\;,}\\
0 & \textrm{if $t\ge m$}\;.
\end{array}\right.
$$
For each $\omega\in D(\bb R_+,S)$ let $\omega^m$ the element of
$D(\bb R_+,S)$ defined by
$$
\omega^m(t)\;=\;g_m(t)\omega(t),\quad t\ge 0\;.
$$
Finally, we define the metric in $D(\bb R_+,S)$ by
$$
d(\omega,\hat\omega) \;=\; \sum_{m=1}^{\infty} 
2^{-m}(1\land d_m(\omega^m,\hat\omega^m))\;.
$$
This metric induces the Skorohod topology in $D(\bb R_+,S)$ (cf. \cite{b}).

Recall the jumping times $\tau_n(\omega)$, $n\ge 0$ defined in
(\ref{taun}) for any path $\omega\in D(\bb R_+,S)$ performing
infinitely many jumps. Further, let $n_t(\omega)$ stand for the number
of jumps of $\omega$ strictly before time $t>0$ :
$$
n_t(\omega)\;:=\;\sup\{j\ge 0 : \tau_j(\omega) < t\}\;.
$$

Now, let $\omega$ and $\hat\omega$ be elements of $D(\bb R_+,S)$ which
perform infinitely many jumps. To keep notation simple set
$\tau_n:=\tau_n(\omega)$, $\hat\tau_n:=\tau_n(\hat\omega)$ and $\hat
n_t:=n_t(\hat\omega)$. Suppose that
\begin{equation}\label{cono}
\hat\tau_{n+1}-\hat \tau_{n} \;\ge\; \tau_{n+1} 
- \tau_n\quad \textrm{and}\quad \omega(\tau_n) 
\;=\; \hat\omega(\hat\tau_n)\quad \textrm{for all $n\ge 0\;,$}
\end{equation}
so that $\omega(s)=\hat\omega(t)$ for $(s,t)\in
[\tau_n,\tau_{n+1}[\times[\hat\tau_n,\hat\tau_{n+1}[$ and $n\ge 0.$
Then, we claim that
\begin{equation}\label{dm}
d_m(\omega^m,\hat\omega^m) \;\le\; \kappa 
\max\{\hat\tau_{\hat n_m}-\tau_{\hat n_m},(m-\tau_{\hat n_m+1})^{+} \}
\end{equation}
for all integer $m\ge 1$. Let us prove the claim. Notice that
$\tau_{\hat n_m}<m$ for every $m\ge 1$. Choose $\epsilon>0$ small
enough such that $\hat\tau_{\hat n_m}<m-\epsilon$ and $\tau_{\hat n_m}
< (m\land \tau_{\hat n_m+1})-\epsilon$. Now, let $\lambda\in
\Lambda_m$ be given by: $\lambda_{\hat\tau_j}=\tau_j$, for $j\le \hat
n_m$, $\lambda_{m-\epsilon}=(m\land \tau_{\hat n_m+1})-\epsilon$,
$\lambda_m=m$ and we complete $\lambda$ on $[0,m]$ by linear
interpolation. Then,
$$
\sup_{t\in [0,m]} |\lambda_t-t| \; \le \; 
\max\{ \hat\tau_{\hat n_m} - \tau_{\hat n_m}, (m-\tau_{\hat n_m+1})^+\}\;.
$$
Moreover,
\begin{eqnarray*}
\sup_{t\in[0,m-\epsilon]}\big|g(\lambda_t)\omega(\lambda_t)
-g(t)\hat\omega(t)\big| &\le& \kappa\sup_{t\in[0,m-\epsilon]} 
|g(\lambda_t)-g(t)|\\
&\le& \kappa \max\{ \hat\tau_{\hat n_m} - \tau_{\hat n_m}, 
(m-\tau_{\hat n_m+1})^+\}
\end{eqnarray*}
and
\begin{eqnarray*}
\sup_{t\in[m-\epsilon,m]}\big|g(\lambda_t)
\omega(\lambda_t)-g(t)\hat\omega(t)\big| &\le& 
\kappa \sup_{t\in[m-\epsilon,m]}\big(\, |g(\lambda_t)| + |g(t)|\,\big)\\
&\le& \kappa \{ (m-\tau_{\hat n_m+1})^+ + 2\epsilon \}\;.
\end{eqnarray*}
Since $\epsilon$ is arbitrary small, the claim is proved.

Recall the coupling of $\hat X^N_t$ and $X^N_t$ used in the proof of
Condition (B). Then, in order to prove Proposition \ref{corol}, it is
enough to prove that for any $\delta>0$
\begin{equation*}
\lim_{N\to\infty}\bb P^{N}_{\eta_N}\big[\,d(X^N,\hat X^N)
>\delta\,\big] \;=\; 0\;.
\end{equation*}
Notice that the paths of $\hat X^N_t$ and $X^N_t$ in the coupling
satisfy conditions (\ref{cono}) and, in consequence, estimate
(\ref{dm}). Let us denote $ \hat n^N_m := n_m(\hat X^N)$. To estimate
the right hand side in (\ref{dm}), observe that
$$
\tau_{\hat n^N_m}(\hat X^N) - \tau_{\hat n^N_m}(X^N) 
\;=\; \mc T_{\hat n^N_m}^{\Delta_N} \le \mc T^{\Delta_N}_m
$$
and $(m-\tau_{\hat n^N_m+1}(X^N))^+ \le \mc T_m^{\Delta_N}$.
Therefore, by (\ref{dm}) we have
$$
d(X^N,\hat X^N)\; \le \; \sum_{m=1}^{\infty} 2^{-m} 
(1\land \kappa\mc T^{\Delta_N}_m)\;.
$$
 From this estimate and condition (\ref{metab}) of 
meta-stability it follows that
$$
\lim_{N\to\infty} \bb E^N_{\eta_N}[d(X^N,\hat X^N)] \;=\; 0\;.
$$
This completes the proof of Proposition \ref{corol}.


\section{Markov processes}
\label{sec03}
We state in this section several properties of Markov processes used
throughout the article.  Consider an irreducible Markov process $\{
\eta_t : t\ge 0 \}$ on a finite state space $E$ and denote its unique
invariant measure by $\nu$.  Denote by $\bb P_{\eta}$ the probability
measure on the path space $D(\bb R_+,E)$ corresponding to the Markov
process $\eta_t$ starting from $\eta$ and let $\bb E_{\eta}$ stand for
the expectation with respect to $\bb P_{\eta}$.

\subsection{Replacement Lemma}
In this subsection we prove the result used in Lemma \ref{s03}. We
start with the following identity.

\begin{lemma}
\label{s08}
Fix a mean-zero function $V: E \to \bb R$ and a site $\eta$ in $E$.
Let $H_{\eta}$ be the entry time to $\eta$ : $H_{\eta} \;=\; \inf
\{t\ge 0 : \eta_t = \eta\}.$ Then, for all $\xi$ in $E$,
\begin{equation*}
\bb E_{\xi} \Big[ \int_0^t  V(\eta_s) \, ds \Big] \;=\; 
\bb E_{\xi} \Big[ \int_0^{H_{\eta}}  V(\eta_s) \, ds \Big] \;-\;
\bb E_{\xi} \Big[ \bb E_{\eta_t} \Big[ \int_0^{H_{\eta}}  V(\eta_s) \, ds \Big] 
\, \Big]\;.
\end{equation*}
\end{lemma}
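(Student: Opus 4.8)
The plan is to use the strong Markov property together with the fact that $V$ has mean zero, exploiting the regeneration structure at the site $\eta$. Write $\int_0^t V(\eta_s)\, ds = \int_0^{H_\eta} V(\eta_s)\, ds + \int_{H_\eta}^t V(\eta_s)\, ds$ on the event $\{H_\eta \le t\}$; on the complementary event $\{H_\eta > t\}$ we have $\int_0^t V(\eta_s)\, ds = \int_0^t V(\eta_s)\,ds$ and, since the process has not yet reached $\eta$, this should be matched against the correction term. The cleaner route, which I would actually carry out, is to introduce the function
\begin{equation*}
u(\xi) \;:=\; \bb E_\xi\Big[\int_0^{H_\eta} V(\eta_s)\, ds\Big]\;,
\end{equation*}
with $u(\eta) = 0$, and to recall the standard fact that $u$ solves $L u = -V$ on $E\setminus\{\eta\}$ (this is the Dirichlet-type problem associated with the Poisson equation killed at $\eta$). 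Here one uses crucially that $V$ is mean-zero so that a solution exists; $\sum_\xi \nu(\xi)(Lu)(\xi) = 0$ automatically, and the compatibility condition $\sum_\xi \nu(\xi) V(\xi) = 0$ is exactly our hypothesis.

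Given $u$, I would apply Dynkin's formula to the process started at $\xi$: the process $u(\eta_{t}) - u(\eta_0) - \int_0^{t} (Lu)(\eta_s)\, ds$ is a martingale, but one must be careful because $Lu = -V$ holds only off $\eta$, so instead I run Dynkin up to the stopping time $t \wedge H_\eta$ — or, better, I use the identity $(Lu)(\xi) = -V(\xi) + c\,\mathbf 1\{\xi = \eta\}$ for the appropriate constant $c$. Evaluating this and taking expectations, the indicator term contributes (by the ergodic/occupation identity for finite Markov chains, or by a direct regeneration argument) a term proportional to $\bb E_\xi[\text{local time at }\eta]$, which can in turn be written via the strong Markov property in terms of $\bb E_{\eta_t}[\,\cdot\,]$.

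Concretely, the slickest argument: by the strong Markov property at $H_\eta$, on $\{H_\eta \le t\}$,
\begin{equation*}
\bb E_\xi\Big[\int_{H_\eta}^t V(\eta_s)\, ds \,\Big|\, \mathcal F_{H_\eta}\Big] \;=\; \bb E_\eta\Big[\int_0^{t - H_\eta} V(\eta_s)\, ds\Big]\;,
\end{equation*}
and I would then establish the single-site identity
\begin{equation*}
\bb E_\eta\Big[\int_0^{s} V(\eta_r)\, dr\Big] \;=\; -\,\bb E_\eta\Big[\bb E_{\eta_s}\Big[\int_0^{H_\eta} V(\eta_r)\, dr\Big]\Big]
\end{equation*}
for every $s \ge 0$ (valid because starting and "re-entering" at $\eta$ makes the first passage decomposition telescope, using again mean-zero of $V$). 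Substituting $s = t - H_\eta$, taking expectation over $H_\eta$, and recombining with the piece $\int_0^{H_\eta} V(\eta_s)\, ds$ from $\{H_\eta \le t\}$ and the term $\int_0^t V(\eta_s)\, ds$ from $\{H_\eta > t\}$ (which equals $\int_0^{H_\eta\wedge t}V(\eta_s)\,ds$ there), yields precisely the claimed formula after noticing that $\bb E_\xi[\int_0^{H_\eta\wedge t}V] - \bb E_\xi[\bb E_{\eta_{t\wedge H_\eta}}[\cdots]]$ collapses to the stated right-hand side.

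The main obstacle I anticipate is bookkeeping the two events $\{H_\eta \le t\}$ and $\{H_\eta > t\}$ consistently and making sure the "re-entry" identity at $\eta$ is stated and proved correctly — i.e. verifying that $\bb E_\eta[\int_0^s V(\eta_r)dr] + \bb E_\eta[\bb E_{\eta_s}[\int_0^{H_\eta}V]] = 0$. This is where mean-zero of $V$ is used in an essential way (it is false for general $V$), and it is essentially the statement that the time-$s$ shift of the "excursion integral from $\eta$" is a martingale increment; I would prove it by writing $\int_0^s V(\eta_r)\,dr = \int_0^{H_\eta^{(1)}}V + \cdots$ decomposing over successive returns to $\eta$ and using that each completed excursion integral has expectation $0$ under $\bb P_\eta$ (a consequence of $\nu$-stationarity and the cycle formula $\bb E_\eta[\int_0^{H_\eta^+}V(\eta_r)dr] = \nu(\eta)^{-1}\sum_\xi \nu(\xi)V(\xi) = 0$, where $H_\eta^+$ is the first return time). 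Once that single-site identity is in hand, the general-$\xi$ statement follows by one application of the strong Markov property at $H_\eta$, and the proof is complete.
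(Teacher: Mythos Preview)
Your proposal is correct and ultimately rests on the same ingredients as the paper's proof: the regeneration structure at $\eta$, the cycle formula $\bb E_\eta\big[\int_0^{H_\eta^+} V(\eta_r)\,dr\big]=0$ (which is exactly where mean-zero enters), and the strong Markov property. The Dynkin/Poisson-equation detour you sketch first is not needed and you rightly abandon it.

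The organization differs slightly. You first isolate the special case $\xi=\eta$ (your ``single-site identity'') via the excursion decomposition, and then reduce the general case to it by one application of the strong Markov property at the random time $H_\eta$, splitting into $\{H_\eta\le t\}$ and $\{H_\eta>t\}$. The paper instead works directly for general $\xi$: it partitions according to the full sequence of return times $H_0=0,H_1,H_2,\dots$ to $\eta$, writes $\int_0^t V=\sum_{k\le j}\int_{H_k}^{H_{k+1}}V-\int_t^{H_{j+1}}V$ on $\{H_j\le t<H_{j+1}\}$, kills every completed excursion with $k\ge 1$ by the cycle formula, and obtains the correction term by applying the Markov property at the deterministic time $t$ to the overshoot $\int_t^{H_{j+1}}V$. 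Your route is a bit more modular; the paper's is a single unified computation. Either way the bookkeeping you flag on $\{H_\eta>t\}$ is straightforward: there $\int_t^{H_\eta}V$ has conditional expectation $\bb E_{\eta_t}[\int_0^{H_\eta}V]$ by Markov at $t$, which is exactly the piece needed.
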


\begin{proof}
Let $(\Theta_t)_{t\ge 0}$ stand for the shift operators. Define $H_0=0$,
$$
H_{1} \;=\; \inf \{t\ge 0 : \eta_t = \eta \,, \eta_s \not = \eta \text{
  for some $0\le s < t$}\}\;
$$
and $H_{j+1}=H_{1}\circ \Theta_{H_{j}}$ for $j\ge 1$.
The expected value appearing in the
statement of the lemma can be written as
\begin{eqnarray}
\label{f17}
\!\!\!\!\!\!\!\!\!\!\!\!\!\!\! &&
\sum_{j\ge 0} \bb E_{\xi} \Big[ \int_0^t  V(\eta_s) \, ds 
\, \mb 1\{H_j \le t <H_{j+1} \} \Big] \\
\!\!\!\!\!\!\!\!\!\!\!\!\!\!\! && \quad =\;
\sum_{j\ge 0} \sum_{k=0}^j \bb E_{\xi} \Big[
\int_{H_k}^{H_{k+1}} V(\eta_s) \, ds \, \mb 1\{H_j \le t <H_{j+1}
\} \Big] \nonumber \\
\!\!\!\!\!\!\!\!\!\!\!\!\!\!\! && \quad - \;
\sum_{j\ge 0} \bb E_{\xi} \Big[
\int_{t}^{H_{j+1}} V(\eta_s) \, ds \, \mb 1\{H_j \le t <H_{j+1}
\} \Big] \;. \nonumber
\end{eqnarray}

We compute these two terms separately. The first one can be rewritten
as
\begin{equation*}
\sum_{k\ge 0} \bb E_{\xi} \Big[ \int_{H_k}^{H_{k+1}} V(\eta_s) 
\, ds \, \mb 1\{H_k \le t \} \Big]\;.
\end{equation*}
Taking conditional expectation with respect to $\mc F_{H_k}$, by the
strong Markov property, this sum is equal to
\begin{equation*}
\bb E_{\xi} \Big[ \int_{0}^{H_1} V(\eta_s) \, ds \Big]
\;+\; \bb E_{\eta} \Big[ \int_{0}^{H_1} V(\eta_s) \, ds \Big] \, 
\sum_{k\ge 1} \bb P_{\xi} \Big[ H_k \le t \Big]  \;.
\end{equation*}
Notice that in the second expectation, $\eta$ appears instead of
$\xi$.  Since $V$ has mean-zero with respect to the unique stationary
measure $\nu$, the second expectation vanishes.

We now turn to the second term in the decomposition \eqref{f17}. Observe
that $\{H_j \le t < H_{j+1}\}$ belongs to the $\sigma$-algebra $\mc F_t$
and on this set $H_{j+1} = t + H_1 \circ \Theta_t$. Therefore, by the
Markov property, the second term in \eqref{f17} is equal to
\begin{equation*}
\sum_{j\ge 0} \bb E_{\xi} \Big[ \mb 1\{H_j \le t <H_{j+1}
\} \, \bb E_{\eta_t} \Big[ \int_{0}^{H_1} V(\eta_s) \, ds \,
\Big]\, \Big]  
\;=\; \bb E_{\xi} \Big[ \bb E_{\eta_t} 
\Big[ \int_{0}^{H_1} V(\eta_s) \, ds \, \Big] \, \Big] \;.
\end{equation*}
We have just proved that
\begin{equation*}
\bb E_{\xi} \Big[ \int_0^t  V(\eta_s) \, ds \Big] \;=\; 
\bb E_{\xi} \Big[ \int_0^{H_1}  V(\eta_s) \, ds \Big] \;-\;
\bb E_{\xi} \Big[ \bb E_{\eta_t} \Big[ \int_0^{H_1}  V(\eta_s) \, ds \Big] 
\, \Big]\;.
\end{equation*}
On the one hand, if $\xi\not = \eta$ then $H_{\eta}=H_1$ $\bb
P_{\xi}$-a.s. and so
$$
E_{\xi} \Big[ \int_0^{H_1} V(\eta_s) \, ds \Big] 
\;=\; E_{\xi} \Big[ \int_0^{H_{\eta}}  V(\eta_s) \, ds \Big]\;.
$$
On the other hand, if $\xi = \eta$ then $H_{\eta}=0$ $\bb
P_{\xi}$-a.s. and so
$$
E_{\eta} \Big[ \int_0^{H_1} V(\eta_s) \, ds \Big] 
\;=\; 0 \;=\; E_{\eta} \Big[ \int_0^{H_{\eta}}  V(\eta_s) \, ds \Big]\;.
$$
This concludes the proof of the lemma.
\end{proof}

\begin{corollary}
\label{s09}
Assume further that $V$ vanishes outside a subset $F$ of $E$. Then,
\begin{equation*}
\max_{\xi\in E} \Big\vert \bb E_{\xi} \Big[ \int_0^t  
V (\eta_s) \, ds \Big] \, \Big\vert \;\le\; 2 \Vert V\Vert_\infty
\max_{\xi\in F} \bb E_{\xi} [H_{\eta}] \;.
\end{equation*}
\end{corollary}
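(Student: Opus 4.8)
Building on Lemma~\ref{s08}, the plan is to feed it a bound on $\bb E_{\zeta}[\int_0^{H_{\eta}} V(\eta_s)\,ds]$ that is uniform in the starting point $\zeta\in E$. Since $V$ vanishes outside $F$ and the state space is finite (so all entry times have finite expectation), I would first record that for every $\zeta\in E$
$$
\Big| \bb E_{\zeta} \Big[ \int_0^{H_{\eta}} V(\eta_s)\,ds \Big] \Big|
\;\le\; \Vert V\Vert_\infty \; \bb E_{\zeta} \Big[ \int_0^{H_{\eta}}
\mb 1\{\eta_s\in F\}\,ds \Big]\;,
$$
which reduces everything to controlling the occupation time of $F$ before $H_{\eta}$, uniformly in $\zeta$, by $\max_{\xi\in F}\bb E_{\xi}[H_{\eta}]$.

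The key step would be the claim that for every $\zeta\in E$,
$$
\bb E_{\zeta} \Big[ \int_0^{H_{\eta}} \mb 1\{\eta_s\in F\}\,ds \Big]
\;\le\; \max_{\xi\in F} \bb E_{\xi}[H_{\eta}]\;.
$$
To prove it, I would let $H_F$ be the entry time in $F$ (with the convention $H_F=0$ when $\zeta\in F$) and set $\sigma=H_F\wedge H_{\eta}$. On $[0,\sigma)$ the process avoids $F$, hence the integrand vanishes there and the integral equals $\mb 1\{\sigma<H_{\eta}\}\int_{\sigma}^{H_{\eta}}\mb 1\{\eta_s\in F\}\,ds$. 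On the event $\{\sigma<H_{\eta}\}$ one has $\eta_{\sigma}\in F$ and $H_\eta=\sigma+H_\eta\circ\Theta_\sigma$, so applying the strong Markov property at the stopping time $\sigma$ and using the crude bound $\int_0^{H_{\eta}}\mb 1\{\eta_s\in F\}\,ds\le H_{\eta}$ gives
$$
\bb E_{\zeta} \Big[ \int_0^{H_{\eta}} \mb 1\{\eta_s\in F\}\,ds \Big]
\;=\; \bb E_{\zeta}\Big[ \mb 1\{\sigma<H_{\eta}\}\,
\bb E_{\eta_{\sigma}}\Big[\int_0^{H_{\eta}}\mb 1\{\eta_s\in F\}\,ds\Big]\Big]
\;\le\; \max_{\xi\in F}\bb E_{\xi}[H_{\eta}]\;,
$$
which is the claim. (If $F$ is empty, or is never reached before $\eta$, both sides vanish.) Combined with the first display, this yields $\big| \bb E_{\zeta}[\int_0^{H_{\eta}} V(\eta_s)\,ds]\big| \le \Vert V\Vert_\infty \max_{\xi\in F}\bb E_{\xi}[H_{\eta}]$ for \emph{every} $\zeta\in E$.

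To finish, I would invoke Lemma~\ref{s08}: for an arbitrary $\xi\in E$,
$$
\Big| \bb E_{\xi} \Big[ \int_0^t V(\eta_s)\,ds \Big] \Big|
\;\le\; \Big| \bb E_{\xi} \Big[ \int_0^{H_{\eta}} V(\eta_s)\,ds \Big] \Big|
\;+\; \bb E_{\xi}\Big[\, \Big| \bb E_{\eta_t} \Big[ \int_0^{H_{\eta}} V(\eta_s)\,ds \Big] \Big| \,\Big]\;,
$$
and both terms on the right are at most $\Vert V\Vert_\infty\max_{\xi\in F}\bb E_{\xi}[H_{\eta}]$ by the previous paragraph — the second one after applying the pointwise bound with $\zeta=\eta_t$ inside the outer expectation. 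Taking the maximum over $\xi\in E$ then produces the stated inequality, the factor $2$ being exactly the two terms supplied by Lemma~\ref{s08}. There is no genuine obstacle here; the only point needing a little care is the case distinction in the key claim — whether $\zeta$, resp.\ $\eta$, lies in $F$ — but the argument above is deliberately written to cover all cases at once.
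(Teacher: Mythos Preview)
Your proof is correct and follows essentially the same approach as the paper's: both invoke Lemma~\ref{s08} and then bound each of the two resulting terms by applying the strong Markov property at the entry time of $F$ (your $\sigma=H_F\wedge H_\eta$), using that the integrand vanishes before $F$ is reached and that from any point of $F$ the remaining integral is at most $\Vert V\Vert_\infty\,\bb E_{\cdot}[H_\eta]$. The only cosmetic difference is that you first pass to the occupation-time bound $|V|\le\Vert V\Vert_\infty\mb 1_F$ and then run the strong Markov argument on the indicator, whereas the paper applies strong Markov directly to the $V$-integral; the substance is identical.
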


\begin{proof}
We need to estimate the two expectations appearing on the right hand
side of the statement of Lemma \ref{s08}. Recall that $H_{F}$ denotes
the entry time in $F$ defined in (\ref{f10}). Since $ V$ vanishes
outside $F$, in the first expectation we may take the time integral on
the interval $[H_{F},H_{\eta}]$ and apply the strong Markov property
to rewrite this term as
\begin{equation*}
\bb E_{\xi} \Big[ \bb E_{\eta_{H_{F}}} \Big[ 
\int_{0}^{H_{\eta}} V(\eta_s) \, ds \Big] \, \Big]\;.
\end{equation*}
This expression is absolutely bounded by $\Vert V\Vert_\infty
\max_{\xi\in F} \bb E_{\xi} [ H_{\eta}]$. By the same reasons, the second
term is bounded above by the same quantity.
\end{proof}

\subsection{The trace process}\label{trace}
We recall in this subsection some elementary properties of trace
processes and deduce from these attributes some identities used
throughout the article.

Denote by $L$ the generator of the Markov process $\eta_t$ and by
$R(\cdot, \cdot)$ its transition rates so that
\begin{equation*}
(Lf)(\eta) \;=\; \sum_{\xi\in E} \big\{ \,f(\xi) - f(\eta)\,\big\}\,R(\eta,\xi)
\end{equation*}
for all functions $f:E\to\bb R$. Let $p(\cdot,\cdot)$ be the jump
probabilities and $\lambda$ the holding times:
\begin{equation*}
\lambda(\eta) \;=\; \sum_{\xi\not = \eta} R(\eta,\xi)\;, \quad
p(\eta,\xi) \;=\; \frac 1{\lambda(\eta)} \, R(\eta,\xi)\;.
\end{equation*}

Fix a non-empty subset $F$ of $E$. Denote by $\eta^F_t$ the trace
process of $\eta_t$ on $F$. Denote by $L^F$ the generator of the
Markov process $\eta^F_t$, by $R^F(\cdot, \cdot)$ its transition
rates, by $\lambda^F$ the holding times and by $p^F(\cdot, \cdot)$ the
jump probabilities. We have that
\begin{equation}
\label{g02}
\lambda^{F}(\eta) \;=\; \, \lambda(\eta) \, 
\bb P_{\eta} \big[ H_{F\setminus\{\eta\}} < \tau_{\eta}\big] \; , 
\quad p^{F}(\eta,\xi) \;=\; 
\bb P_{\eta} \big[ H_{\{\xi\}} = H_{F\setminus\{\eta\}}\big]
\end{equation}
for $\eta,\xi\in F$, $\eta\not = \xi$, where $\tau_{\eta}$ stands for
the time of the first return to $\eta$ :
\begin{equation*}
\tau_{\eta} \;=\; \inf \big\{t\ge 0 : \eta_t = \eta \,, 
\eta_s \not = \eta \text{
  for some $0\le s < t$}\big\}\;.
\end{equation*}

To prove the first identity in \eqref{g02}, denote by $\tau^F_1$ the
first jumping time of the process $\eta^F_t$. Under $\bb P_{\eta}$,
$\tau^F_1$ can be represented as
\begin{equation*}
\tau^F_1 \;=\; \sum_{j=1}^N S_j\;,
\end{equation*}
where $S_j$ are i.i.d.\! exponential random variables with parameter
$\lambda(\eta)$ and, by the strong Markov property, $N$ is a geometric
r.v. independent of the sequence $\{S_j : j\ge 1\}$ and such that $\bb
P_\eta [N=k] = a (1-a)^{k-1}$, $k\ge 1$, for $a= \bb P_{\eta} \big[
H_{F\setminus\{\eta\}} < \tau_{\eta}\big]$. It remains to recall that
an independent geometric sum of independent exponential r.v. is
exponentially distributed. The second identity in \eqref{g02} is
obvious.
 
An explicit formula for $\lambda^{F}$, $p^{F}$ can be obtained
recursively. Indeed, an elementary computation shows that, when $F=
E\setminus \{\xi_0\}$ for some $\xi_0$ in $E$,
\begin{equation}\label{eqRF}
R^F(\eta,\xi) \;=\; R(\eta,\xi) \;+\; R(\eta,\xi_0) \, p(\xi_0,\xi)
\end{equation}
for $\eta\not = \xi$, $\{\eta,\xi\}\subseteq E\setminus \{\xi_0\}$.

 From now on, we assume that $\eta_t$ is an irreducible Markov
process. Denote by $\nu$ its unique invariant measure.

\begin{lemma}
\label{trnu}
For any non-empty subset $F$ of $E$, $\{\eta^F_t : t\ge 0\}$ is an
irreducible Markov process. The measure $\nu$ conditioned to $F$,
denoted by $\nu^F$, is the unique invariant probability measure of
$\eta^F_t$. Moreover, $\nu^F$ is reversible, whenever $\nu$ is.
\end{lemma}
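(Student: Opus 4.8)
The plan is to establish the three assertions in turn, relying throughout on the picture of $\eta^F_t$ as the trajectory $\eta_t$ with its excursions outside $F$ excised by the time change $\mc S^F$. First I would observe that $\eta^F_t$ visits exactly the states of $F$ visited by $\eta_t$, and in the same order: on a sojourn interval where $\eta_s=\xi\in F$ the functional $\mc T^F$ is strictly increasing, so $\mc S^F$ takes values in that interval and $\eta^F$ equals $\xi$ at the corresponding times. Since $\eta_t$ is irreducible on the finite set $E$, from any $\eta\in F$ it reaches any $\xi\in F$ almost surely; hence so does $\eta^F_t$, which is therefore an irreducible Markov process on the finite set $F$ and admits a unique invariant probability measure. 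It remains only to identify this measure as $\nu^F$; and in the reversible case it then suffices to exhibit a reversible measure, since a reversible probability measure is automatically invariant and so, by uniqueness, must coincide with the invariant one.

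To identify the invariant measure I would use occupation times. Running $\eta_t$ from any fixed state, the ergodic theorem for finite irreducible continuous-time chains gives $t^{-1}\int_0^t\mathbf 1\{\eta_s=\xi\}\,ds\to\nu(\xi)$ and $t^{-1}\mc T^F_t\to\nu(F)$ almost surely. Writing $\int_0^t\mathbf 1\{\eta^F_s=\xi\}\,ds=\int_0^{\mc S^F_t}\mathbf 1\{\eta_s=\xi\}\,ds$ and using the relation $\mc T^F_{\mc S^F_t}=t$, one divides by $t$ and lets $t\to\infty$ to obtain $t^{-1}\int_0^t\mathbf 1\{\eta^F_s=\xi\}\,ds\to\nu(\xi)/\nu(F)=\nu^F(\xi)$ almost surely; since $\eta^F_t$ is irreducible on a finite set its empirical occupation times converge to its unique invariant measure, which is thus $\nu^F$. (Alternatively, iterating the one-point identity \eqref{eqRF} yields $L^Ff=(L\bar f)|_F$, where $\bar f$ is the extension of $f$ to $E$ that is $L$-harmonic off $F$, and then $\int_F L^Ff\,d\nu^F=0$ follows from $\int_E L\bar f\,d\nu=0$ together with $L\bar f\equiv 0$ on $E\setminus F$.)

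For reversibility I would reduce to removing a single point and iterate, using transitivity of the trace construction: if $F\subseteq G\subseteq E$ then the trace of $\eta^G_t$ on $F$ coincides with $\eta^F_t$, which is immediate on composing the time changes $\mc T$ and $\mc S$. Enumerating $E\setminus F$ as $\{\xi_1,\dots,\xi_r\}$ and removing its points one at a time, it is enough to prove that if $\nu$ is reversible for $\eta_t$ then the restriction of $\nu$ to $F_0:=E\setminus\{\xi_0\}$ is reversible for $\eta^{F_0}_t$. For this I would use \eqref{eqRF}, namely $R^{F_0}(\eta,\xi)=R(\eta,\xi)+R(\eta,\xi_0)\,p(\xi_0,\xi)$, and verify $\nu(\eta)R^{F_0}(\eta,\xi)=\nu(\xi)R^{F_0}(\xi,\eta)$ for $\eta,\xi\in F_0$: the first terms agree by detailed balance for $\nu$, while the cross term, upon using $p(\xi_0,\xi)=R(\xi_0,\xi)/\lambda(\xi_0)$ and $\nu(\eta)R(\eta,\xi_0)=\nu(\xi_0)R(\xi_0,\eta)$, becomes $\nu(\xi_0)R(\xi_0,\eta)R(\xi_0,\xi)/\lambda(\xi_0)$, which is symmetric in $\eta$ and $\xi$. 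Applying this step $r$ times --- at each stage the process is again irreducible and the restricted measure again reversible and invariant --- gives reversibility of $\nu^F$ for $\eta^F_t$; since a reversible measure is invariant, this also re-proves the identification of the invariant measure in the reversible case.

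I expect the only delicate points to be making precise that tracing is transitive under successive restrictions, which underpins the reduction to one-point removal, and the elementary but slightly fiddly bookkeeping with $\mc T^F$ and $\mc S^F$ in the occupation-time argument (or the analogous verification of $L^Ff=(L\bar f)|_F$ if one prefers the generator route). The algebra in the one-point reversibility computation is routine once the cancellation displayed above is noticed.
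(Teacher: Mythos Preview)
Your proof is correct. For reversibility you follow exactly the paper's route: verify detailed balance for the one-point removal $F_0=E\setminus\{\xi_0\}$ using \eqref{eqRF}, then iterate (the paper phrases this as ``induction on the cardinality of $F$''). Where you differ is in the handling of irreducibility and of the identification of the invariant measure. The paper treats all three claims uniformly by the same one-point-removal-plus-induction scheme: from \eqref{eqRF} one checks directly that $\nu$ restricted to $E\setminus\{\xi_0\}$ is invariant (and reversible, and that the chain stays irreducible), and then iterates. You instead give direct, global arguments: a pathwise observation that $\eta^F$ visits the $F$-states of $\eta$ in the same order (hence irreducibility), and an occupation-time ergodic argument to pin down $\nu^F$ as the stationary law. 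Both are valid; your occupation-time route is conceptually transparent and avoids any algebra, at the cost of the small bookkeeping with $\mc T^F$ and $\mc S^F$ you flag, while the paper's approach is more algebraic and uniform but relies on the explicit rate formula \eqref{eqRF} at each step. The transitivity of tracing that you single out as delicate is exactly what underlies the paper's induction as well, so you are not assuming anything the paper does not.
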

\begin{proof}
For $F=E\setminus \{\xi_0\}$, the lemma follows from identity
(\ref{eqRF}) and straightforward computations. To prove it for general
subsets $F$ we proceed by induction on the cardinality of $F$.
\end{proof}

For each $F\subseteq E$, and all subsets $G_1$, $G_2$ of $F$ such that
$G_1 \cap G_2 = \phi$ we denote
\begin{equation*}
r_F(G_1,G_2) \;:=\; \frac{1}{\nu(G_1)} 
\sum_{\substack{\eta\in G_1 \\ \xi \in G_2}} \nu(\eta) \, R^F(\eta,\xi)\;.
\end{equation*}
Observe that if $\nu$ is reversible, then 
$$
\nu(G_1) \; r_F(G_1,G_2)\;=\;\nu(G_2) \; r_{F}(G_2,G_1)\;.
$$
Recall the definition of the capacity of irreducible reversible Markov
processes introduced in Subsection \ref{cap}. We denote by
$\Cap(\cdot,\cdot)$, respectively by $\Cap_F(\cdot,\cdot)$, the
capacity for $\eta_t$, respectively for $\eta^F_t$.

We summarize in the next lemma, properties of the process $\eta^F_t$
used throughout Section \ref{proof}. For each $\eta$ in $F$, let $\bb
P^F_{\eta}$ be the probability measure on the path space $D(\bb R_+,
F)$ induced by the Markov process $\eta^F_t$ starting from
$\eta$. Also, denote by $\bb E^F_{\eta}$ the expectation with respect
to $\bb P^F_{\eta}$ for each $\eta\in F$.

\renewcommand{\theenumi}{\alph{enumi}}
\renewcommand{\labelenumi}{(\theenumi)}

\begin{lemma}
\label{s07}
Fix a non-empty subset $F$ of $E$, and two subsets $G_1$, $G_2$ of $F$
such that $G_1 \cap G_2 = \phi$. There holds,
\begin{enumerate}
\item For all $\eta$ in $F$,
\begin{equation*}
\bb P^F_{\eta} \big [H_{G_1} < H_{G_2} \big] \;=\; 
\bb P_{\eta} \big [H_{G_1} < H_{G_2} \big] \;.
\end{equation*}
\item Let $\check{G}_{2} = F\setminus G_2$, then
\begin{equation*}
\nu(G_1) \; r_F(G_1,G_2) \;=\;
\sum_{\substack{\eta\in G_1 \\ \xi \in E}}
\nu(\eta) \, R(\eta,\xi) \, \bb P_{\xi}\big [ H_{G_2} <H_{\check{G}_{2}} \big]\;.
\end{equation*}
\item If $\nu$ is reversible and $F=G_1\cup G_2$, then
$$
\nu(G_1)\; r_F(G_1,G_2)\;=\; \Cap(G_1,G_2)\;.
$$
\item If $\nu$ is reversible, then
\begin{equation*}
\Cap_F(G_1,G_2)\;=\;\frac {\Cap(G_1,G_2)}{\nu(F)}\;\cdot
\end{equation*}
\item If $\nu$ is reversible, then for any $\eta\in F\setminus G$
\begin{equation*}
\bb E^F_{\eta}[H_{G}]\;=\;\frac{\sum_{\xi\in F}
\bb P_{\xi} \big [H_{\{\eta\}} < H_{G} \big]\;\nu(\xi) }
{\Cap\big(\{\eta\},G\big)}\;\cdot
\end{equation*}
\end{enumerate}
\end{lemma}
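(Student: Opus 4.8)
The plan is to establish the five assertions in the order stated, each feeding the next, using the description \eqref{g02} of the trace rates, the one-step recursion \eqref{eqRF}, Lemma \ref{trnu}, and---for the reversible items (c)--(e)---the variational characterization of the capacity recalled in Subsection \ref{cap} together with formula \eqref{anton}. Item (a) is a pure path identity: for $\eta\in F$ and $G_1,G_2\subseteq F$, the event $\{H_{G_1}<H_{G_2}\}$ depends on a path $\omega$ only through the order in which its successive visits to $F$ fall in $G_1$ or in $G_2$, and the time-change $\mc R^F$ preserves this ordered sequence of visits to $F$; since $\bb P^F_\eta=\bb P_\eta\circ(\mc R^F)^{-1}$, the two probabilities coincide. (Alternatively, one checks by induction on $|E\setminus F|$, using \eqref{eqRF}, that the restriction to $F$ of $\xi\mapsto\bb P_\xi[H_{G_1}<H_{G_2}]$ is $L^F$-harmonic off $G_1\cup G_2$ with boundary values $1$ on $G_1$ and $0$ on $G_2$, and invokes the maximum principle for $L^F$.)

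For (b) I would first multiply the asserted identity by $\nu(\eta)$ and sum over $\eta\in G_1$, reducing it to showing, for each fixed $\eta\in G_1$, that
$$
\sum_{\xi\in G_2}R^F(\eta,\xi)\;=\;\sum_{\xi\in E}R(\eta,\xi)\,\bb P_\xi[H_{G_2}<H_{\check{G}_2}]\;.
$$
By \eqref{g02} the left-hand side is $\lambda^F(\eta)\sum_{\xi\in G_2}p^F(\eta,\xi)$, which, unfolding the definitions and using the strong Markov property at the first return time $\tau_\eta$ to discard the irrelevant conditioning on that return, equals $\lambda(\eta)$ times the $\bb P_\eta$-probability that $\eta_t$ reaches $F\setminus\{\eta\}$ at a point of $G_2$ before coming back to $\eta$. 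I would then expand this by a first-step analysis at the initial jump of $\eta_t$ out of $\eta$; the key point is that an excursion of $\eta_t$ which leaves $F$ and returns to $\eta$ before hitting $F\setminus\{\eta\}$ produces no jump of the trace process and, by the strong Markov property, does not bias where $F\setminus\{\eta\}$ is eventually first reached. This turns the left-hand side into $\lambda(\eta)\sum_{\xi}p(\eta,\xi)\,\bb P_\xi[H_{G_2}<H_{F\setminus G_2}]=\sum_{\xi}R(\eta,\xi)\,\bb P_\xi[H_{G_2}<H_{\check{G}_2}]$, because $F\setminus G_2=\check{G}_2$ and the inner probability takes the obvious value $1$ or $0$ when $\xi$ already belongs to $G_2$ or to $\check{G}_2$. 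I expect this to be the delicate step, precisely because of the bookkeeping around returns of $\eta_t$ to $\eta$ through $E\setminus F$.

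Item (c) is then the case $F=G_1\cup G_2$ of (b): there $\check{G}_2=G_1$, so $\bb P_\xi[H_{G_2}<H_{G_1}]=1-f(\xi)$ with $f:=f_{G_1,G_2}$, and since $f$ equals $1$ on $G_1$, $0$ on $G_2$ and is $L$-harmonic off $G_1\cup G_2$, one obtains $\nu(G_1)r_F(G_1,G_2)=\sum_{\eta\in G_1}\nu(\eta)\sum_\xi R(\eta,\xi)(1-f(\xi))=-\sum_{\eta\in G_1}\nu(\eta)(Lf)(\eta)=\langle -Lf,f\rangle_\nu=D_N(f)$, which equals $\Cap(G_1,G_2)$ by reversibility. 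For (d) I would apply (c) twice. By Lemma \ref{trnu} the process $\eta^F$ is irreducible and reversible with invariant measure $\nu^F=\nu(\cdot)/\nu(F)$; moreover traces compose, so the trace of $\eta^F$ on $G_1\cup G_2$ is $\eta^{G_1\cup G_2}$ (as $G_1\cup G_2\subseteq F$). Applying (c) to the chain $\eta^F$ then gives $\Cap_F(G_1,G_2)=\sum_{\eta\in G_1,\,\xi\in G_2}\nu^F(\eta)\,R^{G_1\cup G_2}(\eta,\xi)=\frac{1}{\nu(F)}\sum_{\eta\in G_1,\,\xi\in G_2}\nu(\eta)\,R^{G_1\cup G_2}(\eta,\xi)$, while applying (c) to $\eta$ itself identifies the last sum with $\Cap(G_1,G_2)$; hence $\Cap_F(G_1,G_2)=\Cap(G_1,G_2)/\nu(F)$.

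Finally, for (e), since $\eta^F$ is an irreducible reversible Markov chain on the finite set $F$ (Lemma \ref{trnu}), formula \eqref{anton} applies to it: for $\eta\in F\setminus G$, $\bb E^F_\eta[H_G]=\nu^F(f^F_{\eta,G})/\Cap_F(\eta,G)$, where $f^F_{\eta,G}(\xi)=\bb P^F_\xi[H_{\{\eta\}}<H_G]$ for $\xi\in F$. By (a) this equals $\bb P_\xi[H_{\{\eta\}}<H_G]=f_{\eta,G}(\xi)$ for every $\xi\in F$, so the numerator is $\frac{1}{\nu(F)}\sum_{\xi\in F}\nu(\xi)\,\bb P_\xi[H_{\{\eta\}}<H_G]$; by (d) the denominator is $\Cap(\eta,G)/\nu(F)$; cancelling the common factor $\nu(F)$ yields the asserted formula. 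Should one prefer not to quote \eqref{anton}, the same argument goes through after reproving it for $\eta^F$ as in \cite{b2}.
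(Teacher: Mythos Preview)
Your proof is correct. Items (a), (c) and (e) follow essentially the same lines as the paper (for (c) the paper quotes the identity $\Cap(G_1,G_2)=\sum_{\eta\in G_1,\xi\in E}\nu(\eta)R(\eta,\xi)\bb P_\xi[H_{G_2}<H_{G_1}]$ from \cite{l}, which is exactly what your Dirichlet-form computation reproduces). The genuine differences lie in (b) and (d). For (b) the paper does not use a single first-step/excursion argument as you do; instead it writes $\nu(G_1)r_F(G_1,G_2)=\sum_{\eta\in G_1,\xi\in F}\nu(\eta)R^F(\eta,\xi)\bb P^F_\xi[H_{G_2}<H_{\check G_2}]$, enumerates $E\setminus F=\{\xi_1,\dots,\xi_M\}$, and proves by induction on $i$, via the one-point recursion \eqref{eqRF} together with (a), that this sum is unchanged when $F$ is replaced by $F\cup\{\xi_1,\dots,\xi_i\}$; the case $i=M$ is the claim. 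For (d) the paper does not invoke composition of traces; it reuses the inductive step from (b) to equate $\sum_{\eta\in G_1,\xi\in F}\nu(\eta)R^F(\eta,\xi)\bb P^F_\xi[H_{G_2}<H_{G_1}]$ with $\sum_{\eta\in G_1,\xi\in E}\nu(\eta)R(\eta,\xi)\bb P_\xi[H_{G_2}<H_{G_1}]$ and then identifies both sides as $\nu(F)\Cap_F(G_1,G_2)$ and $\Cap(G_1,G_2)$ respectively. Your route is more probabilistic and shorter once the excursion independence (location of first hit of $F\setminus\{\eta\}$ versus success on the first excursion) is justified; the paper's induction is more mechanical but has the advantage that (d) falls out for free from the intermediate identity without needing the transitivity of the trace operation.
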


\begin{proof}
Claim (a) is obvious from the construction of the trace process. To
prove Claim (b), observe that we may write the left hand side of (b)
as
\begin{equation*}
\sum_{\substack{\eta\in G_1 \\ \xi \in F}} \nu(\eta) \, R^F(\eta,\xi) \,
\bb P^F_{\xi}\big [ H_{G_2} < H_{\check{G}_{2}} \big]\;.
\end{equation*}
Enumerate the set $E\setminus F$ as $\{\xi_1, \dots, \xi_M\}$. Let
$E_0 =\phi$, $E_i = \{\xi_1, \dots , \xi_i\}$, for $1\le i\le M$.  We
prove by induction that the previous expression is equal to
\begin{equation}
\label{f12}
\sum_{\substack{\eta\in G_1 \\ \xi \in F \cup E_i}} 
\nu(\eta) \, R^{F \cup E_i}(\eta,\xi) 
\, \bb P^{F\cup E_i}_{\xi}\big [ H_{G_2} < H_{\check{G}_{2}} \big]
\end{equation}
for $0\le i\le M$. Claim (b) follows from this identity since the
right hand side of (b) coincide with the previous sum for $i=M$.

Obviously, the two previous sums are equal for $i=0$. 
Assume that the identity holds for some $0\le i\le M-1$. 
By (\ref{eqRF}), \eqref{f12} is equal to
\begin{eqnarray*}
\!\!\!\!\!\!\!\!\!\!\!\!\!\! &&
\sum_{\substack{\eta\in G_1 \\ \xi \in F \cup E_i}} \nu(\eta) \, R^{F \cup
  E_{i+1}}(\eta,\xi)  
\bb P^{F\cup E_i}_{\xi}\big [ H_{G_2} < H_{\check{G}_{2}} \big] \\
\!\!\!\!\!\!\!\!\!\!\!\!\!\! && \quad
\;+\; \sum_{\substack{\eta\in G_1 \\ \xi \in F \cup E_i}} \nu(\eta) \, 
R^{F \cup E_{i+1}}(\eta,\xi_{i+1}) p^{F \cup E_{i+1}} (\xi_{i+1}, \xi) 
\bb P^{F\cup E_i}_{\xi}\big [ H_{G_2} < H_{\check{G}_{2}} \big] \;.
\end{eqnarray*}
By (a), we may replace in the previous formula $F\cup E_i$ by $F\cup E_{i+1}$ 
in the superscript of $\bb P_{\xi}$. After this replacement, note that
\begin{equation*}
\sum_{\xi \in F \cup E_i} p^{F \cup E_{i+1}} (\xi_{i+1}, \xi) 
\bb P^{F\cup E_{i+1}}_{\xi}\big [ H_{G_2} < H_{\check{G}_{2}} \big] 
\;=\; \bb P^{F\cup E_{i+1}}_{\xi_{i+1}} 
\big [ H_{G_2} < H_{\check{G}_{2}} \big] 
\end{equation*}
since the left hand side is obtained from the right hand side by
conditioning on the first jump. This proves (b).

To prove claims (c), (d) and (e), for the remaining part of the proof
we suppose that $\nu$ is reversible. By arguments similar to the proof
of Theorem II.6.1 in \cite{l}, we have
\begin{equation}\label{ki}
\Cap(G_1,G_2)\;=\;\sum_{\substack{\eta\in G_1 \\ \xi \in E}} 
\nu(\eta) \, R(\eta,\xi) \,
\bb P_{\xi}\big [ H_{G_2} < H_{G_1} \big]\;.
\end{equation}
Therefore, claim (c) follows from this identity and claim (b) when
$F=G_1\cup G_2.$ Further, it follows from the proof of (b) that
\begin{eqnarray*}
\sum_{\substack{\eta\in G_1 \\ \xi \in F}} \nu(\eta) \, R^F(\eta,\xi) \,
\bb P^F_{\xi}\big[ H_{G_2} < H_{G_1} \big] &=& 
\sum_{\substack{\eta\in G_1 \\ \xi \in E}} \nu(\eta) \, R(\eta,\xi) \,
\bb P_{\xi}\big [ H_{G_2} < H_{G_1} \big]\;.
\end{eqnarray*}
By the version of identity \eqref{ki} for the trace process
$\eta^F_t$, the left hand side of this equation is equal to
$\nu(F)\Cap_F(G_1,G_2)$. By \eqref{ki}, this completes the proof of
(d). Finally, claim (e) follows easily from formula (\ref{anton}) for
the trace process $\eta^F_t$, claim (a) and claim (d). We are done.
\end{proof}

\end{document}